\numberwithin{equation}{section}
\newtheorem{theorem}{Theorem}
\newtheorem{lemma}[theorem]{Lemma}
\newtheorem{corollary}[theorem]{Corollary}
\newtheorem*{conjecture}{\bf Conjecture}
\newtheorem{proposition}[theorem]{Proposition}
\theoremstyle{remark}
\theoremstyle{definition}
\numberwithin{theorem}{section} 
\numberwithin{equation}{section}
\numberwithin{table}{section}
\newcommand{\ord}{\text {\rm ord}}
\newcommand{\GG}{\mathcal{G}}
\newcommand{\R}{\mathbb{R}}
\newcommand{\Q}{\mathbb{Q}}
\newcommand{\Z}{\mathbb{Z}}
\newcommand{\N}{\mathbb{N}}
\newcommand{\SL}{{\text {\rm SL}}}
\newcommand{\lcm}{{\text {\rm lcm}}}
\def\H{\mathbb{H}}
\renewcommand{\(}{\left(}
\renewcommand{\)}{\right)}
\newcommand{\vol}{\operatorname{vol}}
\newcommand{\Mod}{ \, (\operatorname{mod} \,}
\newcommand{\x}{\bm{\mathrm{x}}}
\newcommand{\spn}{{\rm spn} \,}
\newcommand{\gen}{{\rm gen} \,}
\begin{document}
\title[On Sun's conjecture]{On a Conjecture of Sun about sums of restricted squares}
\author{Soumyarup Banerjee}
\address{Discipline of Mathematics, Indian Institute of Technology Gandhinagar, Palaj, Gandhinagar-382355, Gujarat, India.}
\email{soumyarup.b@iitgn.ac.in}
\thanks{2020 \textit{Mathematics Subject Classification.} 11F37, 11F11, 11E25, 11E45, 11N36, 11P05.\\
\textit{Keywords and phrases.} Sums of squares, Almost prime numbers, Quadratic forms, Theta function, Sieve theory.}

\medskip
\begin{abstract}
In this paper, we investigate sums of four squares of integers whose prime factorizations are restricted, making progress towards a conjecture of Sun that states that two of the integers may be restricted to the forms $2^a3^b$ and $2^c5^d$. We obtain an ineffective generalization of results of Gauss and Legendre on sums of three squares and an effective generalization of Lagrange's four-square theorem.
\end{abstract}

\maketitle

\section{Introduction And Statement Of Results}\label{sec:intro}
The study of representations of integers by sums of integral squares goes back to antiquity and has a storied history. To give one famous example, Legendre (in 1797) and Gauss (in 1796--1801) separately proved that every natural number not of the form $4^q(8\ell +7)$ can be represented as the sum of three squares of non-negative integers. Moreover, Gauss' work culminated in a formula that relates the number of representations $r_3(m)$ of $m$ as a sum of three squares to a class number of an associated imaginary quadratic field. Letting $H(D)$ denote the Hurwitz class number, Gauss' result may be stated as 
\begin{equation*}
r_3(m) = \begin{cases}
12 H(-4m) & \text{ if } n \equiv 1, 2 \pmod 4,\\
24 H(-m) &  \text{ if } n \equiv 3 \pmod 8,\\
r_3(\frac{m}{4}) &  \text{ if } n \equiv 0 \pmod 4,\\
0 &  \text{ if } n \equiv 7 \pmod 8.
\end{cases}
\end{equation*}
Prior to Gauss and Legendre's works on sums of three squares, Lagrange established in 1770 that every natural number can be represented as the sum of four squares of non-negative integers. 
Jacobi later found a formula in 1834 analogous to that of Gauss for the number of representation $r_4(m)$ of $m$ as a sum of four square, yielding 
\begin{equation}\label{Jacobi}
r_4(m) = 8\sum_{\substack{d\mid m\\ 4\nmid d}} d.
\end{equation}
Lagrange's four-square theorem and Jacobi's formula \eqref{Jacobi} have been generalized in numerous directions through the years, with some results extending the types of sums being taken and others restricting the integers being squared into certain subsets. Along this vein, Sun recently stated a {\em four square conjecture} where some of the integers are restricted to be products of powers of $2$, $3$, and $5$, as stated below.
\begin{conjecture}
Every $n = 2, 3, \cdots$ can be written as $x^2 + y^2 +(2^a 3^b)^2 + (2^c 5^d)^2$, where $x, y, a, b, c, d$ are non-negative integers.
\end{conjecture}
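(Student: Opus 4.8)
The plan is to fix the two restricted squares and reduce the conjecture to a classical statement about sums of two squares. Write $s = 2^a 3^b$ and $t = 2^c 5^d$; then the conjecture says that for every $n \ge 2$ there are non-negative integers $a,b,c,d$ with $s^2 + t^2 \le n$ and $n - s^2 - t^2$ a sum of two integer squares. By Fermat's two-square criterion this is equivalent to requiring $n - s^2 - t^2 \ge 0$ together with every prime $p \equiv 3 \pmod 4$ dividing $n - 4^a 9^b - 4^c 25^d$ to an even power. So the whole problem becomes: choose $a,b,c,d$ steering $n - 4^a 9^b - 4^c 25^d$ into this admissible set. The first step is the purely local analysis. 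Since $s$ carries an arbitrary power of $2$ and of $3$, and $t$ an arbitrary power of $2$ and of $5$, one has a lot of freedom modulo $8$, modulo powers of $3$, and modulo powers of $5$; I would verify, as Sun did numerically, that there is no global congruence obstruction, i.e. the local conditions needed to prevent a fixed bad prime $p \equiv 3 \pmod 4$ from dividing the target to an odd power can always be met simultaneously.

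For the bulk of the range I would route through sums of three squares rather than attack the pair $(s,t)$ head on. Fix $t$ small --- say $t = 1$, or a slightly larger element of $\{2^c 5^d\}$ to dodge a forbidden value --- so that it suffices to represent $N := n - t^2$ as $x^2 + y^2 + z^2$ with $z$ itself of the form $2^a 3^b$. By the theorem of Gauss and Legendre, $N = x^2 + y^2 + z^2$ is solvable unless $N = 4^q(8\ell + 7)$, and since the forbidden values are sparse the choice of $t$ in $\{2^c 5^d\}$ easily avoids them; the real content is forcing the extra constraint on $z$. The representing vectors of $N$ by the ternary form equidistribute on the sphere $x^2 + y^2 + z^2 = N$ (Duke, building on Iwaniec's bounds for Fourier coefficients of half-integral weight forms, with the congruence refinements of Duke--Schulze-Pillot), and with this input a linear or Selberg sieve can be run on the $z$-coordinate, sifting out every prime factor other than $2$ and $3$. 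For $N$ large and in suitable congruence classes this should produce a representation with $z$ supported on $\{2,3\}$, hence $z \in \{2^a 3^b\}$, and then the conjecture follows for all sufficiently large $n$ --- ineffectively, since both the equidistribution rate (via potential Siegel zeros) and the sieve leave the threshold unquantified.

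The main obstacle is exactly the thinness of the set $\{2^a 3^b\}$: up to $\sqrt{N}$ it has only $\asymp (\log N)^2$ elements, so it has essentially zero sieve density, and an honest sieve on the $z$-coordinate will at best deliver ``$z$ has at most $r$ prime factors'' for some fixed $r$, or $z \in \{2^a 3^b m : m \le N^{\delta}\}$, rather than $z$ a genuine $\{2,3\}$-number. Bridging that last gap --- upgrading the almost-prime output to an exact multiplicative shape --- is where the conjecture resists, so the realistic theorem is an approximation in which the restricted square is only almost of the prescribed form, or in which finitely many exceptional $n$ remain unruled-out effectively.

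Finally, for the effective four-square refinement I would keep all four squares in play and exploit the slack. To write $n = x^2 + y^2 + z^2 + w^2$ with one (or, under a weaker restriction on the allowed primes, both) of the last two squares of the prescribed form, pick $w = 2^a 3^b$ maximal with $w^2 \le n$ subject to $n - w^2$ not being of the forbidden shape $4^q(8\ell + 7)$; such a $w$ exists by an elementary case analysis over powers of $2$ and $3$, because the forbidden values in a dyadic window form a bounded proportion, and once $w$ is fixed $n - w^2$ is a sum of three squares unconditionally. Making the choice of $w$ explicit is elementary, so this half is effective, and combining it with the ternary equidistribution-plus-sieve argument of the second paragraph yields the advertised progress toward Sun's conjecture, with the full statement left open precisely because of the thinness barrier above.
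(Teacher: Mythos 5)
Your proposal does not prove the statement, and you effectively say so yourself: the statement here is Sun's conjecture, which the paper also does not prove --- it is recorded as a conjecture, described as out of reach with current techniques, and the paper's actual theorems are almost-prime approximations (every sufficiently large $m\neq 4^q(8\ell+7)$ is $x^2+y^2+(2^az)^2$ with $z$ having at most $118$ prime factors, and every $m$ is $x^2+y^2+(2^a3^bz_1)^2+(2^c5^dz_2)^2$ with $z_1,z_2$ vanishing or having at most $369$ prime factors). The fatal gap is the one you name in your third paragraph: the set $\{2^a3^b\}$ (likewise $\{2^c5^d\}$) contains only $O((\log N)^2)$ elements up to $\sqrt N$, so from the point of view of any sieve it has density zero, and the equidistribution-plus-sieve machinery you invoke (Duke and Duke--Schulze-Pillot, or equivalently the Eisenstein/cusp decomposition with Blomer-type bounds and Rosser--Iwaniec weights that the paper actually uses) can only sift the $z$-coordinate down to a bounded number of prime factors; it cannot force an exact multiplicative shape. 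Your first paragraph's ``no local obstruction'' check and your fourth paragraph's elementary choice of a power of two (which is exactly how the paper deduces its corollary about $x^2+y^2+2^{2a}+(2^bz)^2$ from its ternary theorem) do nothing to close this: the upgrade from ``$z$ is an almost prime'' to ``$z$ is supported on $\{2,3\}$ or $\{2,5\}$'' is precisely the open problem, so the second paragraph's conclusion ``hence $z\in\{2^a3^b\}$'' is unjustified.

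As a roadmap for the approximations, your sketch does run parallel to what the paper proves: fix one restricted square, sieve the third coordinate of a ternary representation (the paper does this via local densities and the Siegel--Weil identity for $x_1^2+x_2^2+d^2x_3^2$, with the main term $\gg m^{1/2-\epsilon}$ coming from Siegel's ineffective class number bound, which is why its three-square result is ineffective), and separately run an effective quaternary vector sieve to restrict both of the last two variables simultaneously. One small caution on your effective step: choosing $w$ so that $n-w^2$ avoids the residue class $7\pmod 8$ is not by itself the same as avoiding the full forbidden set $4^q(8\ell+7)$, so the ``elementary case analysis'' needs to be done at the level of $m=4^km'$ with $4\nmid m'$ as in the paper. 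But even granting all of that, what you obtain is the paper's Theorem~1.1/Theorem~1.5-type statements, not the conjecture itself.
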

This conjecture seems to be out of reach with current techniques, but, following results of Br\"udern--Fouvry \cite{BrudernFouvry}, some progress can be made in restricting the number of prime divisors of the last two squares. The goal of this article is to demonstrate how to use such techniques to generalize both Gauss/Legendre's three-square theorem and Lagrange's four-square theorem in the direction of Sun's four square conjecture. The following result generalizes Gauss' three square theorem ineffectively in the sense that every sufficiently large integer not of the form $4^q(8\ell +7)$ can be represented by sum of three squares where the last variable can be restricted to almost prime inputs. Here an \begin{it}almost prime\end{it} of order $n$ is a product of at most $n$ primes.
\begin{theorem}\label{Thm 1}
Every sufficiently large integer $m$ not of the form $4^q(8\ell +7)$ can be represented in the form 
$$m = x^2 + y^2 + (2^a z)^2,$$
where $x$, $y$, $a$ and $z$ are any integers with $a$ non-negative and $z$ has at most $118$  prime factors. Moreover, the number of such representation exceeds $cm^{1/2-\epsilon}(\log m)^{-1}$ for some positive constant $c$.  
\end{theorem}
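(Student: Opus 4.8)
The plan is to treat the restricted variable as the object of a weighted linear sieve: one fixes a positive-proportion range for it, writes the representation count as a sum of values of $r_2$ along a quadratic polynomial (itself weighted by $r_2$), proves a level-of-distribution estimate for this sequence, and extracts almost-prime values by Richert's weighted sieve. Begin by reducing to $4\nmid m$: writing $m=4^q m_0$ with $4\nmid m_0$, the hypothesis forces $m_0\not\equiv 7\pmod 8$, and a representation $m_0=x_0^2+y_0^2+(2^{a_0}z)^2$ scales to $m=(2^q x_0)^2+(2^q y_0)^2+(2^{q+a_0}z)^2$ with the same $z$; so we may assume $m\equiv 1,2,3,5,$ or $6\pmod 8$. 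In that range a short $2$-adic check shows that $m-w^2$ carries no obstruction to being a sum of two squares whenever $w$ is odd, so henceforth we take $a=0$ and look for $m=x^2+y^2+z^2$ with $z$ odd and $\Omega(z)\le 118$; the power $2^a$ plays no further role.

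Next, set up the sieve. Fix $0<\delta_1<\delta_2<1$, let $I=[\delta_1\sqrt m,\delta_2\sqrt m]$, and let $\mathcal A=(a_w)$ with $a_w=r_2(m-w^2)$ for odd $w\in I$ and total mass $X:=\sum_{w\in I,\ w\ \mathrm{odd}}r_2(m-w^2)$. From $r_2(n)=4\sum_{e\mid n}\chi_{-4}(e)$ one obtains, for odd squarefree $d$, an asymptotic
\[
A_d(\mathcal A):=\sum_{\substack{w\in I,\ w\ \mathrm{odd}\\ d\mid w}}r_2(m-w^2)=g(d)\,X+r_d,
\]
with $g$ multiplicative and $g(p)=p^{-1}\bigl(1+O(p^{-1})\bigr)$, so that the sifting dimension is $\kappa=1$ and we are in the linear-sieve regime. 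One also gets $X=\mathfrak S(m)\,|I|+O(m^{1/2-\delta})$ for a fixed $\delta>0$, where $\mathfrak S(m)$ is the singular series of $x^2+y^2+z^2=m$; since $m$ is not of the excluded shape, $\mathfrak S(m)>0$, and comparison with Gauss' class-number formula for $r_3(m)$ quoted above gives $\mathfrak S(m)|I|\asymp r_3(m)$, whence by Siegel's (ineffective) bound $H(-D)\gg_\epsilon D^{1/2-\epsilon}$ one has $X\gg_\epsilon m^{1/2-\epsilon}$ — the sole source of the ineffectivity.

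The analytic heart, and the step I expect to be the main obstacle, is the level of distribution: for suitable fixed $\theta,\delta>0$,
\[
\sum_{\substack{d\le m^{\theta}\\ d\ \mathrm{odd,\ squarefree}}}|r_d|\ \ll\ m^{1/2-\delta}.
\]
Expanding $r_2(m-w^2)$ into $\chi_{-4}$ over divisors, truncating the divisor variable at $\sqrt m$ by the hyperbola method, and resolving the interval and congruence conditions by additive characters reduces this to bounding, with a power saving and on average over the two moduli, incomplete sums of $\chi_{-4}$ twisted by Gauss-- and Kloosterman-type factors arising from square roots of $m$ (shifted by $d$) to modulus $e$; one disentangles the moduli $d$ and $e$ and appeals to Weil's bound and the large sieve, in the spirit of Br\"udern--Fouvry \cite{BrudernFouvry}. (The $d=1$ case of this estimate is precisely what is used for the main term in the previous paragraph.)

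Finally, feed $\mathcal A$, the density $g$, the level $\theta$ and the mass $X$ into Richert's weighted linear sieve, after setting aside the finitely many primes dividing $2m$. At the level $\theta$ available here the weighted linear sieve (a computation with Richert's weights) produces a lower-bound weight supported on odd $w$ with at most $118$ prime factors and bounds the associated weighted count from below by $cX(\log m)^{-1}-O(m^{1/2-\delta})\gg_\epsilon m^{1/2-\epsilon}(\log m)^{-1}$ once $\epsilon<\delta$. Each such $w=z$ contributes $r_2(m-w^2)>0$ genuine pairs $(x,y)$, so these are bona fide representations $m=x^2+y^2+(2^0z)^2$ with $\Omega(z)\le 118$, giving both the existence statement and the asserted lower bound.
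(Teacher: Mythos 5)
Your sieve skeleton is essentially the one the paper uses (sieve the third coordinate of $x^2+y^2+z^2=m$, level of distribution $m^{\theta}$, linear sieve, Siegel's ineffective bound for the main term), but the decisive input is missing. Everything rests on the level-of-distribution estimate $\sum_{d\le m^{\theta}}|r_d|\ll m^{1/2-\delta}$ -- equivalently, a power-saving asymptotic, uniform in $d$, for the representation numbers of the ternary forms $x^2+y^2+d^2z^2$ -- and even on its $d=1$ case restricted to your interval $I$. The route you sketch (expand $r_2$ via $\chi_{-4}$, switch divisors, then ``Weil plus large sieve in the spirit of Br\"udern--Fouvry'') does not produce it. After divisor switching, the complementary range becomes a count of $w\in I$ in residue classes $w\equiv\nu\pmod{f}$ with $\nu^2\equiv m\pmod{f}$ and $f$ as large as $\sqrt{m}$, i.e.\ comparable to the length of summation: the trivial $O(1)$ error per class already totals $\sum_{f\le\sqrt m}\rho(f)\gg m^{1/2}$ (with $\rho(f)$ the number of roots), which swamps a main term that Siegel only guarantees to be $\gg m^{1/2-\epsilon}$; there is no complete exponential sum for Weil to act on, and the large sieve gives nothing without bilinear structure. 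What is really needed is cancellation in $\sum_{f}\sum_{\nu^2\equiv m\,(f)}$ of additive characters in $\nu/f$, uniformly for $m\asymp f^2$ -- equidistribution of roots of quadratic congruences in the critical range, of the same depth as Duke-type equidistribution. Br\"udern--Fouvry's Kloosterman-refined circle method is built for quaternary forms (it is what underlies the paper's Theorem \ref{Thm 2}) and does not yield power-saving asymptotics for ternary forms uniformly in $d$. So the step you flag as ``the main obstacle'' is not a technical detail to be outsourced: it is the analytic core of Theorem \ref{Thm 1}, and the sketched elementary treatment of it would fail.

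The paper obtains exactly this estimate from the theory of ternary theta series: $\Theta_{Q_d}=E_d+f_d$, with the Eisenstein coefficient evaluated by the Siegel--Weil formula and explicitly computed local densities (giving the density $\bm{\omega}(m,d)/d$ against $r_3(m)$), and the cuspidal part bounded uniformly in $d$ by Blomer's theorem, $\bm{R}(m,d)\ll d^{45/14}m^{13/28+\epsilon}$ (Lemma \ref{Error term}), applicable because $x^2+y^2+d^2z^2$ has a single spinor genus in its genus. These exponents are what force $\theta<1/118$, and with sifting level $z_0=m^{\gamma}$, $\gamma<1/236$, they are the source of the bound of $118$ prime factors; in your write-up the number $118$ is simply asserted as the output of ``Richert's weights at the level available here,'' but no level is available until the estimate above is proved. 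Two smaller points: restricting to $a=0$, $z$ odd and $w$ in a fixed subinterval makes even the unsieved mass $X$ a cap-equidistribution statement of Duke type, an extra difficulty the paper avoids by sieving the full third coordinate by odd primes only and absorbing the $2$-power into $2^a$; and your claim that $m-w^2$ has no $2$-adic obstruction for every odd $w$ is false as stated (for instance $m\equiv 1\pmod 8$ and suitable odd $w$ give $(m-w^2)/2^t\equiv 3\pmod 4$), although only positivity of the local density, not solubility for each $w$, is actually required.
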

The next result provides a generalization of Lagrange's four square theorem ineffectively in the sense that every sufficiently large integer can be represented by sum of four squares with restricted inputs.
\begin{corollary}\label{Cor 1.2}
Every sufficiently large integer $m$ can be represented in the form
\begin{equation*}
m = x^2 + y^2 + 2^{2a} + (2^bz)^2
\end{equation*}
where $x$, $y$, $a$, $b$ and $z$ are any integers with $a$, $b$ non-negative and $z$ has at most $118$  prime factors.
\end{corollary}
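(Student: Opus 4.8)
The plan is to deduce Corollary \ref{Cor 1.2} from Theorem \ref{Thm 1} by subtracting a suitable power of $4$ from $m$. For $m$ sufficiently large I would exhibit a non-negative integer $a$ with the two properties that $m - 2^{2a} \ge m/2$ and that $m - 2^{2a}$ is not of the form $4^q(8\ell+7)$, and then apply Theorem \ref{Thm 1} to the integer $m - 2^{2a}$. Since $m - 2^{2a} \ge m/2$, it exceeds the (ineffective) threshold of Theorem \ref{Thm 1} once $m$ is large, so that theorem produces integers $x,y,b,z$ with $b \ge 0$ and $z$ a product of at most $118$ primes for which $m - 2^{2a} = x^2 + y^2 + (2^b z)^2$; adding $2^{2a}$ to both sides yields $m = x^2 + y^2 + 2^{2a} + (2^b z)^2$, as required. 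If desired, the quantitative lower bound of Theorem \ref{Thm 1} transfers at once, though the corollary does not ask for it.

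The choice of $a$ is a short case check on $m$ modulo $8$. Recall that a positive integer $N$ is of the form $4^q(8\ell+7)$ exactly when $v_2(N)$ is even and the odd part of $N$ is congruent to $7$ modulo $8$; in particular $N$ avoids this form whenever $4 \nmid N$ and $N \not\equiv 7 \pmod{8}$. If $m \equiv 2,3,4,6,$ or $7 \pmod{8}$, then $m - 1 \equiv 1,2,3,5,$ or $6 \pmod{8}$ respectively, which is never divisible by $4$ and never $\equiv 7 \pmod{8}$, so $a = 0$ works and $m - 1 \ge m/2$. If $m \equiv 1$ or $5 \pmod{8}$, then $m - 4 \equiv 5$ or $1 \pmod{8}$, again odd and $\not\equiv 7 \pmod{8}$, so $a = 1$ works and $m - 4 \ge m/2$ once $m$ is large.

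The remaining case $8 \mid m$ is the one point that needs a little care, since then $m - 1 \equiv 7 \pmod{8}$ while the classes of $m - 4$ and $m - 16$ modulo $8$ are $4$ and $0$, the two ambiguous ones. Here I would set $t = v_2(m) \ge 3$, write $m = 2^t u$ with $u$ odd, and take $a = \lceil t/2 \rceil - 1 \ge 1$, so that $2a \in \{t-2,\, t-1\}$ and in particular $2a < t$. Then $v_2(m - 2^{2a}) = 2a$ and the odd part of $m - 2^{2a}$ equals $2^{\,t-2a} u - 1$; since $t - 2a \in \{1,2\}$ and $u$ is odd, this odd part is congruent to $1$, $3$, or $5$ modulo $8$, so $m - 2^{2a}$ is not of the form $4^q(8\ell+7)$. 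Moreover $2^{2a} \le 2^{t-1} \le m/2$, so the size condition holds too. This settles every residue class of $m$ modulo $8$ and completes the deduction. The whole argument is elementary given Theorem \ref{Thm 1}; the only genuine subtlety is that when $8 \mid m$ one must shift by a power of $4$ calibrated to the $2$-adic valuation of $m$ rather than by a bounded amount, while keeping the shifted integer comfortably above the threshold needed to invoke Theorem \ref{Thm 1}.
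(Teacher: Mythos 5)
Your deduction is correct and is essentially the paper's own argument: the paper likewise obtains the corollary from Theorem \ref{Thm 1} by subtracting a power of $4$ chosen to dodge the $4^q(8\ell+7)$ obstruction, writing $m = 4^k m'$ with $4\nmid m'$ and picking $a$ so that $m'-4^a\not\equiv 7\pmod{8}$. Your write-up is simply a more explicit rendering of the same maneuver; in particular the case analysis on $m \bmod 8$ and on $v_2(m)$, together with the observation that $m - 2^{2a}\ge m/2$, makes fully transparent the transfer of the (ineffective) largeness hypothesis of Theorem \ref{Thm 1}, which the paper leaves implicit.
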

The proof of the above corollary follows from Theorem \ref{Thm 1} by a simple argument. Namely, writing $m = 4^km'$ with $4\nmid m'$, we can choose $a$ such that $m'-4^a\not\equiv 7\pmod{8}$. Hence Corollary \ref{Cor 1.2} follows from Theorem \ref{Thm 1}.

In connection to Sun's four square conjecture, the following result holds directly from Theorem \ref{Thm 1}.
\begin{corollary}
Every sufficiently large integer $m$ can be represented in any of the following form :
$$m = x^2 + y^2 + (2^a3^b z)^2 + (2^c 5^d)^2,$$ 
or
$$m = x^2 + y^2 + (2^a3^b)^2 + (2^c 5^d z)^2,$$
where $x, y, a, b, c, d$ are non-negative integers and $z$ has at most $118$ prime factors. 
\end{corollary}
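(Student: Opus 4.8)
The plan is to deduce this corollary directly from Theorem \ref{Thm 1} by a change of variables, exactly as Corollary \ref{Cor 1.2} is deduced. For the first representation, write $m$ in base $4$ and strip off the largest power of $4$ dividing it, say $m = 4^k m'$ with $4 \nmid m'$. Then $(2^c 5^d)^2$ with $c,d$ chosen suitably should play the role of the ``adjusting square'' $2^{2a}$ from Corollary \ref{Cor 1.2}: since $5^2 \equiv 1 \pmod 8$ and $2^{2c} 5^{2d}$ runs through the residues $1, 4, 0, 0, \dots \pmod 8$ as $c = 0, 1, 2, \dots$, one can pick $c \in \{0,1\}$ and $d = 0$ so that $m' - 2^{2c} 5^{2d} \not\equiv 7 \pmod 8$, hence $m' - 2^{2c}5^{2d}$ (after also accounting for the power of $4$) is not of the excluded form $4^q(8\ell+7)$ and is sufficiently large. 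Applying Theorem \ref{Thm 1} to $m - (2^c 5^d)^2$ then gives $m - (2^c5^d)^2 = x^2 + y^2 + (2^{a'} w)^2$ with $w$ having at most $118$ prime factors; relabelling $2^{a'} w = 2^a 3^b z$ with $b = 0$ and $a = a'$, $z = w$ yields the first claimed form.

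The second representation follows by the symmetric argument with the roles of $3$ and $5$ interchanged: here one uses $3^2 \equiv 1 \pmod 8$ to choose non-negative integers $a, b$ with $b = 0$ (or $b=1$, using $9 \equiv 1 \pmod 8$ as well, so in fact $b=0$ suffices) so that $m - (2^a 3^b)^2$ avoids the form $4^q(8\ell + 7)$ and is large, and then Theorem \ref{Thm 1} produces $m - (2^a 3^b)^2 = x^2 + y^2 + (2^c w)^2$ with $w$ almost prime of order $118$; setting $2^c w = 2^c 5^d z$ with $d = 0$, $z = w$ gives the second form. The only arithmetic to check is that for every residue class of $m' \pmod 8$ there is an admissible choice of the small square $1$ or $4$ (corresponding to $c \in \{0,1\}$, resp. $a \in \{0,1\}$) making the difference not congruent to $7 \pmod 8$, together with the elementary observation that subtracting a bounded quantity from a sufficiently large $m$ leaves a sufficiently large integer; both are immediate.

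There is essentially no obstacle here: the substance is entirely contained in Theorem \ref{Thm 1}, and the corollary is a formal consequence obtained by specializing the exponents (taking $b=0$ in the first form and $d=0$ in the second) and using the freedom in the remaining exponent of $2$ to dodge the single forbidden congruence class, just as in the proof of Corollary \ref{Cor 1.2}. The only point requiring a sentence of care is confirming that one can always realize an adjusting value in $\{1,4\}$ (equivalently, choosing the exponent of $2$ in $\{0,1\}$ with the base-$5$ or base-$3$ part equal to $1$) so that the resulting integer is simultaneously sufficiently large and outside $4^q(8\ell+7)$; this is the same bookkeeping already carried out for Corollary \ref{Cor 1.2} and carries over verbatim.
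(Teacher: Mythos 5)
Your proposal is correct and follows essentially the paper's own route: the paper gives no separate argument beyond saying the corollary ``holds directly from Theorem \ref{Thm 1}'', and the intended derivation is exactly your specialization $b=0$ (resp.\ $d=0$), using the free power of $2$ to dodge the excluded residue class just as in the proof of Corollary \ref{Cor 1.2}. The small bookkeeping you wave at (one should arrange that the difference is neither $\equiv 7 \pmod 8$ nor divisible by $4$, absorb the factor $4^k$ into the exponent of $2$, and treat the degenerate case $m'=1$ separately) is precisely the bookkeeping the paper itself leaves implicit in Corollary \ref{Cor 1.2}.
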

Combining Gauss's result with Siegel's lower bound for the class numbers \cite{SiegelClassNumber} yields that for arbitrarily small $\epsilon_1 > 0$
\begin{equation}\label{Siegel's bound}
r_3(m)\gg h(-m) \gg m^{1/2 - \epsilon_1},
\end{equation}
where $h(-m)$ denotes the class number of the number field $\Q(\sqrt{-m})$. However, Siegel's lower bound is ineffective, so the bound on $m$ for which Theorem \ref{Thm 1} is not effective. Under the assumption of the generalized Riemann hypothesis, Siegel's result can be made effective and Ono and Soundarajan \cite{Ono} worked out an explicit bound in order to obtain an conjectural proof of a conjecture of Ramanujan about sums of the form $x^2+y^2+10z^2$. Following this method, an effective but conjectural version of Theorem \ref{Thm 1} can be obtained. 
    
By further relaxing the conditions on the last two integers being squared, we obtain an effective unconditional version of Theorem \ref{Thm 1}. Moreover, using a quantitative version \cite{BK} of results of Br\"udern--Fouvry \cite{BrudernFouvry} (see also \cite{Tsang} for th current state of the art), one can make this effective constant explicit, leading to the conclusion that indeed every integer may be written in a certain shape.
\begin{theorem}\label{Thm 2}
Every natural number $m$ can be represented in the form of 
$$m = x^2 + y^2 + (2^a3^bz_1)^2 + (2^c 5^d z_2)^2,$$ 
where $x, y, a, b, c, d$ are non-negative integers and $z_1, z_2$ each either vanish or have at most $369$ prime factors.
\end{theorem}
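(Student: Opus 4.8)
\medskip
\noindent\textbf{Proof idea.}
The plan is to reduce Theorem~\ref{Thm 2}, for all $m$ outside an explicit bounded range, to a two-variable almost-prime sieving problem of the kind studied by Br\"udern and Fouvry, to solve it with an explicit number of prime factors by means of the quantitative form of that method in \cite{BK}, and to clear the remaining bounded range using Lagrange's four-square theorem. First I would reduce to $4\nmid m$: if $m=4^{j}m_0$ with $4\nmid m_0$, then any representation $m_0=x^2+y^2+(2^a3^bz_1)^2+(2^c5^dz_2)^2$ of the required shape scales to $m=(2^{j}x)^2+(2^{j}y)^2+(2^{a+j}3^bz_1)^2+(2^{c+j}5^dz_2)^2$, again of the required shape, so one may assume $m\equiv 1,2,3\pmod 4$. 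The sieve below forces $z_1,z_2$ to be odd (indeed free of small prime factors), so $c_it_i^2\equiv c_i\pmod 8$ for the relevant inputs; accordingly I would fix, depending on $m\bmod 8$, a ``coefficient pattern'' $(c_1,c_2)\in\{1,4\}^2$ --- i.e.\ decide whether each of $2^a3^bz_i$ carries a single factor $2$ --- for which the quaternary form $x^2+y^2+c_1t_1^2+c_2t_2^2$ represents $m$ over $\Z_2$ through values with $t_1,t_2$ odd; a short case check shows such a pattern exists for every $m\equiv 1,2,3,5,6,7\pmod 8$. It then suffices to prove: for every sufficiently large $m$ with $4\nmid m$ there exist $z_1,z_2\asymp\sqrt m$, each a product of at most $369$ primes, with $N:=m-c_1z_1^2-c_2z_2^2\ge 0$ a sum of two squares. (The factors $3^b,5^d$ and the option $z_i=0$ play no role here; they are slack, used for small $m$ and for matching Sun's conjecture.)

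\medskip
To produce such $z_1,z_2$ I would study the weighted count
\[
\mathcal N(m)\;=\;\sum_{z_1,z_2}w_1(z_1)\,w_2(z_2)\,r_2\!\big(m-c_1z_1^2-c_2z_2^2\big),
\]
the sum being over the prescribed ranges, where $r_2$ counts representations as a sum of two squares and $w_1,w_2\ge 0$ are the weights of a weighted lower-bound sieve arranged so that $w_i(z_i)>0$ forces $z_i$ to have at most $369$ prime factors; then $\mathcal N(m)>0$ already yields the required representation. Writing $r_2(n)=4\sum_{d\mid n}\chi_{-4}(d)$ and running the Dirichlet hyperbola method in $d$ (equivalently, a circle-method expansion) gives a main term $\asymp m\,\mathfrak S(m)$, with $\mathfrak S(m)=\prod_p\sigma_p(m)$ a product of explicitly computable local densities of the quaternary form. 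The feature that makes this argument effective --- in contrast to Theorem~\ref{Thm 1} --- is that $\mathfrak S(m)$ contains no class number and is bounded below by an absolute positive constant once $(c_1,c_2)$ is chosen as above, so that the ineffective bound \eqref{Siegel's bound} is never used. The error term reduces to estimating
\[
\sum_{z_1,z_2}w_1(z_1)\,w_2(z_2)\sum_{\substack{d\le m^{\theta}\\ d\mid m-c_1z_1^2-c_2z_2^2}}1
\]
on average over the moduli $d$ --- that is, to an effective Bombieri--Vinogradov-type bound for the number of lattice points on the conic $c_1z_1^2+c_2z_2^2\equiv m\pmod d$ inside boxes. This is exactly the input supplied, in quantitative form, by \cite{BK} (with further refinements available via \cite{Tsang}), giving a positive level of distribution $m^{\theta}$.

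\medskip
Feeding $m^{\theta}$ into an explicit weighted lower-bound sieve (Diamond--Halberstam--Richert weights on the pair $(z_1,z_2)$, say) and tracking every constant produces an explicit $M_0$ with $\mathcal N(m)\gg m(\log m)^{-A}>0$ for all $m\ge M_0$ with $4\nmid m$, the parameters tuned so that $z_1$ and $z_2$ each have at most $369$ prime factors; with the reductions above this proves Theorem~\ref{Thm 2} for $m\ge M_0$. For $m<M_0$ I would use Lagrange directly: $m=x^2+y^2+u^2+v^2$ with $u,v\le\sqrt{M_0}$, and, writing $u=2^a3^bz_1$ with $2^a3^b$ the largest divisor of $u$ built from the primes $2$ and $3$, and $v=2^c5^dz_2$ similarly, one gets $\Omega(z_1),\Omega(z_2)\le\Omega(u),\Omega(v)\le\log_2\sqrt{M_0}$, which is $\le 369$ as soon as $M_0\le 2^{738}$; it remains to check that the $M_0$ produced by the quantitative argument obeys this bound --- choosing the saving $(\log m)^{-A}$ with $A$ large keeps $M_0$ small --- or else to settle the finitely many remaining $m$ by direct search, with the parameters arranged so that this set is empty.

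\medskip
The main obstacle is the effective, fully explicit Bombieri--Vinogradov-type equidistribution estimate for $c_1z_1^2+c_2z_2^2\equiv m\pmod d$, uniform in $m$: one needs the level $m^{\theta}$ with $\theta$ large enough that the explicit sieve closes at the clean value $369$, and every implied constant controlled well enough that $M_0$ stays comfortably below $2^{738}$ (ideally small enough to make the small-$m$ verification vacuous). A secondary point is to verify that $\mathfrak S(m)$ has no local obstruction for any admissible $m$ in any of the finitely many congruence cases, so that the main term is genuinely $\gg m$ throughout.
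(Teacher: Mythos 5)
Your overall architecture is genuinely different from the paper's, and its quantitative core is missing. You reduce to a fixed pattern $x^2+y^2+c_1t_1^2+c_2t_2^2$ and propose a Br\"udern--Fouvry-style count $\sum w_1(z_1)w_2(z_2)r_2(m-c_1z_1^2-c_2z_2^2)$, whose error term you convert into an \emph{averaged}, Bombieri--Vinogradov-type equidistribution statement for the conic $c_1z_1^2+c_2z_2^2\equiv m\pmod d$, and you assert that "this is exactly the input supplied, in quantitative form, by \cite{BK}". That attribution is wrong, and it is precisely where your proof has a hole: what \cite{BK} supplies (and what the paper uses, see Lemma \ref{Upper bound on Rmd:2}) is a fully explicit \emph{pointwise} bound on the cuspidal Fourier coefficient of the quaternary theta series $\Theta_{Q_{\bm{\mathrm{d}}}}$, namely $|\bm{R}(m,\bm{\mathrm{d}})|\le 7.07\times 10^{23}(d_3d_4)^{6.01}m^{3/5}$, not an effective level-of-distribution estimate on average over moduli. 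An explicit averaged estimate of the strength you need is exactly the hard analytic content of \cite{BrudernFouvry} and \cite{Tsang}, and making it effective with all constants tracked is not something you can cite away; likewise the final claims that the weighted (Diamond--Halberstam--Richert) sieve "closes at the clean value $369$" and that the resulting $M_0$ stays below $2^{738}$ are asserted, not derived, and in a theorem whose entire content is an explicit constant this is the proof, not an afterthought.

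For comparison, the paper never averages and never leaves an exceptional range. It keeps the full quaternary form $x_1^2+x_2^2+d_3^2x_3^2+d_4^2x_4^2$, writes $|\mathcal{A}_{\bm{\mathrm{d}}}|=\frac{\bm{\omega}(m,\bm{\mathrm{d}})}{d_3d_4}r_4(m)+\bm{R}(m,\bm{\mathrm{d}})$ (Proposition \ref{Proposition 1:2}), gets an unconditional effective main term from Jacobi's formula \eqref{Jacobi} via $r_4(m)\ge 8m$ for $4\nmid m$ (no singular-series lower bound mod $8$ is needed; the local analysis is done once and for all in Lemmas \ref{Lemma for local computation:2}--\ref{Omega for p divide m:2} and \ref{Big omega bound}), and runs a two-dimensional Rosser-type vector sieve with explicit constants ($\mathfrak{C}_\beta(s)$, Lemmas \ref{lem:lambdadsums}--\ref{Lem: Lower bound of sigmamt}). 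The pointwise cusp bound summed trivially over $d_3d_4<D^2$ gives an error $\ll m^{3/5}D^{14.02}$, and the choice $\beta=7$, $D=z_0^{21}$, $z_0=m^{1/738}$ makes the sifting function positive for \emph{every} $m$ with $4\nmid m$; the number $369$ then falls out of $z_i\le\sqrt m=z_0^{369}$, and $4\mid m$ is handled by the same scaling you use. Your $4\nmid m$ reduction and your observation that Lagrange makes the statement trivial for $m\le 2^{738}$ are both correct (the latter is a nice remark the paper does not need), but as it stands your proposal replaces the one genuinely hard explicit estimate by a citation that does not contain it, so it does not yet constitute a proof.
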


The paper is organized as follows. In \S \ref{S2}, we introduce the preliminaries needed for the rest of the paper. In \S \ref{S3}, we prove bounds on the coefficients of theta functions. We apply a linear sieve to prove Theorem \ref{Thm 1} in \S \ref{S4}. In \S \ref{S5}, we obtain the bounds required to prove Theorem \ref{Thm 2}. Finally, Theorem \ref{Thm 2} is proved in \S \ref{S6}.
\section{Preliminaries}\label{S2}
Let $Q$ be any $\ell$-ary positive definite integer valued diagonal quadratic form with $\ell \geq 3$ and $r_Q(m)$ denotes the number of solutions to $Q(\bm{x})=m$ for $\bm{x}\in \Z^{\ell}$. 

\subsection{Theta function}
Let $\H$ be the complex upper half-plane. For $\tau\in \H$ and $q = e^{2\pi i \tau}$, the theta function associated to the quadratic form $Q$ can be defined as
\[
\Theta_Q(\tau):=\sum_{m\geq 0} r_Q(m) q^m = \sum_{\bm{x}\in \Z^{\ell}}q^{Q(\bm{x})}.
\]
The theta function $\Theta_Q$ is a modular form of weight $\ell/2$ on a particular congruence subgroup $\Gamma$ of $\SL_2(\Z)$ with a certain Nebentypus $\chi$ (cf. \cite[Proposition 2.1]{Shimura}). It naturally decomposes as
\[
\Theta_Q=E+f,
\]
where $E$ is an Eisenstein series of weight $\ell/2$ on $\Gamma$ with Nebentypus $\chi$ and $f$ is a cusp form of same weight $\ell/2$ on $\Gamma$ with Nebentypus $\chi$. It turns out that the $m$th coefficient $a_E(m)$ of $E$ grows faster than the $m$th coefficient $a_{f}(m)$ of $f$ for $\ell\geq 3$ and for those $m$ for which $a_{E}(m)$ is non-zero. Moreover, it can be shown that $a_{E}(m)>0$ if and only if $m$ is represented locally i,e. represented modulo any natural number. Therefore, $E$ behaves as the main term of $\Theta_Q$ and $f$ as the error term, with $r_Q(m)>0$ for sufficiently large $m$ that are locally represented. 

\subsection{Eisenstein series part}
Siegel mainly used the ideas to give quantitative meaning to the Fourier coefficients of Eisenstein series associated to the quadratic form $Q$  both in terms of the underlying space of modular forms and also in terms of {\it local densities} associated to $Q$. The Eisenstein series
$$E(\tau) := \sum_{m \geq 0} a_E(m)q^m$$
can be expressed in two different ways :

Firstly, for $\GG(Q)$ denoting a set of representatives of the classes in the genus of $Q$ and $w_Q$ denoting the number of automorphs of $Q$, the Eisenstein series $E$ can be recovered as a weighted sum of theta series over $\GG(Q)$ by  
\begin{equation}\label{eqn:SiegelWeil}
E = \frac{1}{\sum_{Q'\in\GG(Q)}w_{Q'}^{-1}}  \sum_{Q'\in\GG(Q)} \frac{\Theta_{Q'}}{w_{Q'}}. 
\end{equation}
This famous identity is known as {\it Siegel -- Weil average} (the first identity is due to Siegel \cite{Siegel} and a generalization by Weil \cite{Weil}).

We recall here local representation densities for an $\ell$-ary quadratic form $Q$  at $m$ which can be defined by the limit
\begin{equation}\label{Def local density}
\beta_{Q, p}(m):=\lim_{U\to\{m\}} \frac{\vol_{\Z_p^{\ell}} \left(Q^{-1}(U)\right)}{\vol_{\Z_p}(U)}, 
\end{equation}
where $U\subseteq\Z_p$ runs over open subsets of $\Z_p$ containing $m$ and for $p=\infty$ we have open subsets of $\R$. The Fourier coefficients $a_E(m)$ of $E$ can be expressed as an infinite local product
\begin{equation}\label{Local density}
a_E(m) = \prod_{p} \beta_{Q, p}(m),
\end{equation}
where the product runs over all the primes including $\infty$.

\section{Representation of sufficiently large integer by certain ternary quadratic forms}\label{S3}
In this section, we will set up the background and notations for Theorem \ref{Thm 1}. The set to be sieved is the following 
 $$\mathcal{A} = \{z \in \N : x^2 + y^2 + (2^a z)^2= m \},$$
 where $x, y, a$ are any non-negative integers. Setting  $x_1 =x$, $x_2 = y$ and $x_3 = 2^a z$, we have $$\mathcal{A} = \{x_3 \in \N : x_1^2 + x_2^2 + x_3^2= m \}.$$
In order to apply sieve theory, we need an asymptotic formula for the cardinality of the  set
$$\mathcal{A}_{d} = \{x_3 \in \mathcal{A} : x_3 \equiv 0 \Mod d) \} = \{x_3 \in \N :  x_1^2 + x_2^2 + d^2x_3^2= m \},$$
where $2 \nmid d$. It needs some preparation to express the above cardinality in terms of main term and error term. We first consider the quadratic form
$$Q_{d}(\bm{\mathrm{x}}) = x_1^2 + x_2^2 + d^2 x_3^2,$$ 
where $\bm{\mathrm{x}} = (x_1, x_2, x_3)$ and $2 \nmid d$. For simplicity, we abbreviate $Q_1 = Q$. Let, the theta function associated to $Q_{d}$ be $\Theta_{Q_{d}}$ with the Fourier expansion 
$$\Theta_{Q_{d}}(\tau) = \sum_{m\geq 0} r_{Q_{d}}(m) q^n$$
for $\tau \in \H$. Here the Fourier coefficient $r_{Q_{d}}(m)$ denotes the number of representation of $m$ by  $Q_{d}$. As mentioned earlier, $\Theta_{Q_{d}}$ can be decomposed into two parts which are the Eisenstein series part and the cuspidal part respectively.  

\subsection{Eisenstein series contribution}
We denote the Eisenstein series part associated to $Q_{d}$ by $E_{d}$. It follows from \eqref{Local density} that the $m$-th Fourier coefficient of $E_{d}$ can be expressed as 
\begin{equation}\label{Eisenstein product}
a_{E_{d}}(m) = \prod_{p} \beta_{Q_{d,} p}(m),
\end{equation}  
where the product runs over all the primes including $\infty$. The definition \eqref{Def local density} of local representation density yields 
\[
\beta_{Q_{d}, p}(m)=\lim_{U\to\{m\}} \frac{\vol_{\Z_p^{3}} \left(Q^{\leftarrow}(U)\right)}{\vol_{\Z_p}(U)}, 
\]
where $U\subseteq\Z_p$ runs over open subsets of $\Z_p$ containing $m$ and for $p=\infty$ we have open subsets of $\R$. For $p \neq \infty$, the local representation density may be realized by choosing $U$ to be a ball of radius $p^{-r}$ around $m$, in which case we may write
\begin{equation}\label{Local density p}
\beta_{Q_{d,} p}(m) = \lim_{r \to \infty}\frac{|R_{Q_{d,} p^r}(m)|}{p^{2r}}
\end{equation}
with $$R_{Q_{d,} p^r}(m) := \{ \x \in (\Z/p^r \Z)^3 : Q_{d}(\x) \equiv m \Mod p^r) \}.$$ The following lemma is crucial to determine the local density of $Q_{d}$ at $p =2$. 
\begin{lemma}\label{Lemma for p not divide d}
For $p \nmid d$, the local density of $Q_{d}$ satisfies
$$\beta_{Q_{d,} p} = \beta_{Q, p}.$$
In particular, we have $$\beta_{Q_{d,} 2} = \beta_{Q, 2}.$$
\end{lemma}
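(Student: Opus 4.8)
The plan is to observe that for a finite prime $p$ with $p\nmid d$, the integer $d$ is a unit in $\Z_p$, so $Q_d$ and $Q$ are $\Z_p$-equivalent quadratic forms and hence have the same local representation densities. Concretely, I would use the explicit substitution: since $d\in\Z_p^\times$, the map $\phi\colon \Z_p^3\to\Z_p^3$, $(x_1,x_2,x_3)\mapsto (x_1,x_2,dx_3)$, is a $\Z_p$-linear automorphism (its inverse is $(x_1,x_2,x_3)\mapsto(x_1,x_2,d^{-1}x_3)$), and one computes directly that $Q(\phi(\bm{\mathrm{x}})) = x_1^2+x_2^2+d^2x_3^2 = Q_d(\bm{\mathrm{x}})$. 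Because $\phi$ has determinant $d$ with $|d|_p=1$, it preserves the normalized Haar measure on $\Z_p^3$, and $Q_d^{-1}(U) = \phi^{-1}\!\big(Q^{-1}(U)\big)$ for every open $U\ni m$; plugging this into \eqref{Def local density} immediately gives $\beta_{Q_d,p}(m)=\beta_{Q,p}(m)$.

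It is perhaps cleanest to run this argument at the finite level using \eqref{Local density p}. For each $r\ge 1$, multiplication by $d$ is a bijection of $\Z/p^r\Z$ (as $\gcd(d,p)=1$), so $\bm{\mathrm{x}}\mapsto(x_1,x_2,dx_3)$ is a bijection from $R_{Q,p^r}(m)$ onto $R_{Q_d,p^r}(m)$. Hence $|R_{Q,p^r}(m)| = |R_{Q_d,p^r}(m)|$ for all $r$ and all $m$; dividing by $p^{2r}$ and letting $r\to\infty$ in \eqref{Local density p} yields $\beta_{Q_d,p}(m)=\beta_{Q,p}(m)$, i.e. $\beta_{Q_d,p}=\beta_{Q,p}$. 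The special case $\beta_{Q_d,2}=\beta_{Q,2}$ is then just the instance $p=2$, which is permitted precisely because $d$ is odd by the standing hypothesis $2\nmid d$.

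There is essentially no obstacle here: the content is that scaling one coordinate by a $p$-adic unit is a measure-preserving change of variables, so the only thing to check is that the substitution is well defined and bijective at every level $p^r$, which is immediate from $d$ being a unit modulo every power of $p$. I would record the lemma in this form because it lets the later analysis replace the $2$-adic density of the family $Q_d$ (and, more generally, the $p$-adic densities at all $p\nmid d$) by those of the single fixed form $Q$, so that only the finitely many primes $p\mid d$ — together with $p=\infty$, where the Jacobian factor $|d|_\infty=d$ genuinely intervenes — need separate treatment.
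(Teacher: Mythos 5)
Your proof is correct and follows essentially the same route as the paper: both exhibit the coordinate scaling $x_3\mapsto d^{\pm1}x_3$ as a bijection between $R_{Q,p^r}(m)$ and $R_{Q_d,p^r}(m)$ for every $r$ (using that $d$ is invertible modulo $p^r$), and then pass to the limit in \eqref{Local density p}, with the case $p=2$ covered by the standing assumption that $d$ is odd. Your additional $\Z_p$-measure/Jacobian framing is a harmless reformulation of the same idea.
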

\begin{proof}
The proof of the lemma is straight forward. For $p\nmid d$, there exist $d^{-1} \in \Z$ such that $dd^{-1} \equiv 1 \Mod p) $. It follows that there exist a bijection between the sets $R_{Q, p^r}(m)$ and $R_{Q_{d,} p^r}(m)$ under the map $(x_1, x_2, x_3) \mapsto (x_1, x_2, d^{-1}x_3)$. This implies $|R_{Q, p^r}(m)| = |R_{Q_{d,} p^r}(m)|$. We now use  \eqref{Local density p}, which yields the lemma. The lemma follows in particular for $p=2$ since by assumption we have considered $d$ to be odd.
\end{proof}
We fix some notations here before proceeding to the next lemma.
Set for any $d$ odd,
\[
\varepsilon_d:=\begin{cases} 1 &\text{for }d\equiv 1\pmod{4},\\ i&\text{for }d\equiv 3\pmod{4}.\end{cases}
\]
Let the symbols $\left(\frac{\cdot}{\cdot}\right)$ and $[ \cdot ]$ denote the Legendre--Jacobi--Kronecker symbol and the greatest integer function respectively. We set $d = p^{\alpha}d'$ with $p\nmid d'$ and $m=p^Rm'$ with $p\nmid m'$. Let $\delta_\ell$ denotes the standard characteristic function which counts $1$ if $\ell$ happens and vanishes otherwise. In the next lemma, we compute the local representation density $\beta_{Q_{d,} p}$ at each odd prime $p$.
\begin{lemma}{\label{Lemma for local computation}}
For any odd prime $p$, we have
\begin{equation*}
\beta_{Q_{d}, p} (m)= 
\begin{cases}
1 + (1 - p^{-1})\left( \left[ \frac{R}{2} \right] + \varepsilon_p^2 \left[\frac{R+1}{2}\right]\right) - \delta_{2\nmid R} \, p^{-1} - \delta_{2\mid R} \, \varepsilon_p^2 \, p^{-1}
& \text{for  } R < 2 \alpha, \vspace{.5cm}\\
\begin{aligned}
1 + (1 + \varepsilon_p^2)&(1-p^{-1})\alpha + p^{-1}
 - p^{ \alpha-1 -  [\frac{R}{2}]}\\
&- \delta_{2\nmid R} \, p^{\alpha -1 - \frac{R+1}{2}}
+ \delta_{2\mid R} \, \varepsilon_p^2 \, p^{\alpha -1 - \frac{R}{2}} \left(\frac{m'}{p}\right) 
\end{aligned}
&  \text{for  } R \geq 2 \alpha.
\end{cases}
\end{equation*}
\end{lemma}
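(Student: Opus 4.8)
The plan is to compute $\beta_{Q_d, p}(m)$ directly from the exponential sum expansion of the number of solutions modulo prime powers. First I would reduce to the case $d=p^{\alpha}$: writing $d=p^{\alpha}d'$ with $p\nmid d'$, the substitution $x_3\mapsto (d')^{-1}x_3$ is a bijection of $\Z_p$ (exactly as in the proof of Lemma \ref{Lemma for p not divide d}) and removes the square unit $(d')^2$, so that $\beta_{Q_d, p}(m)=\beta_{Q_{p^{\alpha}}, p}(m)$ with $Q_{p^{\alpha}}(\x)=x_1^2+x_2^2+p^{2\alpha}x_3^2$. Writing the indicator of $Q_{p^{\alpha}}(\x)\equiv m\pmod{p^r}$ as an average of additive characters and separating the three variables gives
\[
\beta_{Q_{p^{\alpha}}, p}(m)=\lim_{r\to\infty} p^{-3r}\sum_{a\bmod p^r} e\!\left(\tfrac{-am}{p^r}\right)g(a,p^r)^2\,g(ap^{2\alpha},p^r),
\]
where $e(t)=e^{2\pi i t}$ and $g(b,q)=\sum_{x\bmod q}e(bx^2/q)$. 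Grouping $a$ according to $\gcd(a,p^r)$ turns this into the finite sum $\beta_{Q_{p^{\alpha}}, p}(m)=1+\sum_{k\ge 1}p^{-3k}\sum_{b}e(-bm/p^k)\,g(b,p^k)^2\,g(bp^{2\alpha},p^k)$, the inner sum running over $b\bmod p^k$ with $p\nmid b$.

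Next I would evaluate the inner sums with classical Gauss sum identities. For $p\nmid b$ one has $g(b,p^k)=\leg{b}{p^k}\varepsilon_{p^k}p^{k/2}$, hence $g(b,p^k)^2=(\varepsilon_p^2)^k p^k$ after using $\leg{-1}{p}=\varepsilon_p^2$; moreover $g(bp^{2\alpha},p^k)=p^k$ for $k\le 2\alpha$, while $g(bp^{2\alpha},p^k)=p^{2\alpha}g(b,p^{k-2\alpha})$ for $k>2\alpha$. Substituting these, the $b$-sum collapses: when the surviving Jacobi symbol is trivial (namely $k\le 2\alpha$, or $k>2\alpha$ with $k-2\alpha$ even) it is the Ramanujan sum $c_{p^k}(m)$, equal to $p^{k-1}(p-1)$, $-p^{R}$, or $0$ according as $k\le R$, $k=R+1$, or $k\ge R+2$; when $k-2\alpha$ is odd it is a quadratic Gauss sum twisted by an additive character, which vanishes unless $k=R+1$ and then equals $p^{R}\leg{-m'}{p}\varepsilon_p p^{1/2}$. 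In particular only the finitely many $k\le\max(2\alpha,R+1)$ contribute, so there is no convergence issue.

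Finally I would sum the resulting geometric progressions, separating the two ranges of the statement. If $R<2\alpha$, every contributing $k$ satisfies $k\le R+1\le 2\alpha$, so only the regime $g(bp^{2\alpha},p^k)=p^k$ occurs and the sum collapses to $1+(1-p^{-1})\sum_{k=1}^{R}(\varepsilon_p^2)^k-(\varepsilon_p^2)^{R+1}p^{-1}$; evaluating $\sum_{k=1}^{R}(\varepsilon_p^2)^k=[R/2]+\varepsilon_p^2[(R+1)/2]$ and rewriting the last term as $-\delta_{2\nmid R}p^{-1}-\delta_{2\mid R}\varepsilon_p^2 p^{-1}$ yields the first line. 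If $R\ge 2\alpha$, I split the $k$-sum at $2\alpha$: the part $k\le 2\alpha$ gives $(1-p^{-1})\sum_{k=1}^{2\alpha}(\varepsilon_p^2)^k=(1+\varepsilon_p^2)(1-p^{-1})\alpha$; the part $2\alpha<k\le R$ with $k-2\alpha$ even is a geometric series summing to $p^{-1}-p^{\alpha-1-[R/2]}$; the part $2\alpha<k\le R$ with $k-2\alpha$ odd vanishes; and the boundary term $k=R+1$ contributes $-\delta_{2\nmid R}p^{\alpha-1-(R+1)/2}$ from the Ramanujan sum (when $R$ is odd) together with $\delta_{2\mid R}\varepsilon_p^2 p^{\alpha-1-R/2}\leg{m'}{p}$ from the twisted Gauss sum (when $R$ is even), using $\leg{-m'}{p}=\varepsilon_p^2\leg{m'}{p}$. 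Assembling these terms gives the second line. The main obstacle is not any single step — the Gauss sum evaluations are entirely classical — but the bookkeeping: one must simultaneously track the regimes $k\le 2\alpha$ versus $k>2\alpha$, the trichotomy $k\le R$, $k=R+1$, $k\ge R+2$, and the parities of $k$ and $k-2\alpha$, and correctly isolate the single boundary term $k=R+1$ that produces the Kronecker symbol $\leg{m'}{p}$.
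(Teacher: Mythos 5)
Your proposal is correct and follows essentially the same route as the paper: expand the congruence count via additive characters, split the frequency by its $p$-power content, evaluate the resulting quadratic Gauss sums, and reduce the unit sums to Ramanujan sums (for trivial character) and a single surviving twisted Gauss sum at $k=R+1$ (for the quadratic character), then sum the geometric pieces in the two ranges $R<2\alpha$ and $R\geq 2\alpha$. The only cosmetic differences are that you strip the unit $d'$ at the outset rather than inside the Gauss sum, and you evaluate the unit sums directly instead of invoking the $\tau(\chi,\psi)$ formula cited from Iwaniec--Kowalski; the resulting boundary terms and constants (including the $\varepsilon_p^2\left(\frac{m'}{p}\right)$ coefficient) match the paper's computation.
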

\begin{proof}
The orthogonality of roots of unity, namely 
\begin{equation}\label{Orthogonality of roots of unity}
\frac{1}{p^r}  \sum_{n \Mod p^r)} e^{\frac{2\pi i nm}{p^r}} = \begin{cases} 1 &\text{if }p^{r}\mid m,\\ 0 &\text{otherwise},\end{cases}
\end{equation}
leads to 
\begin{align*}
|R_{Q_{d,} p^r}(m)|&=\sum_{\substack { \x \in(\Z/p^r\Z)^3\\ Q_{d}(\bm{\widetilde{x}}) \, \equiv \, m \Mod p^r)}}1\\
& = \sum_{\x \in(\Z/p^r\Z)^3}\frac{1}{p^r} \sum_{n \Mod p^r)} e^{\frac{2\pi i n}{p^r}\left(Q_{d}(\x)-m\right)}\\
&= \frac{1}{p^r} \sum_{n\Mod p^r)} e^{-\frac{2\pi i nm}{p^r}} \sum_{\x\in(\Z/p^r\Z)^3}e^{\frac{2\pi i n}{p^r}Q_{d}(\x)}\\
&= \frac{1}{p^r} \sum_{n\Mod p^r)} e^{-\frac{2\pi i nm}{p^r}}\(\prod_{j=1}^2 \sum_{x_j\in \Z/p^r\Z}e^{\frac{2\pi i nx_j^2}{p^r}} \) \sum_{x_3 \in \Z/p^r\Z}e^{\frac{2\pi i nd^2x_3^2}{p^r}}\\
&= \frac{1}{p^r} \sum_{n\Mod p^r)} e^{-\frac{2\pi i nm}{p^r}}G_2(n, 0, p^r)^2G_2(nd^2, 0, p^r),
\end{align*}
where in the last step we used the definition of the quadratic Gauss sum, which is given by
\[
G_2(A,B,C):=\sum_{x\, (\operatorname{mod}\, c)} e^{\frac{2\pi i\left(Ax^2+Bx\right)}{C}}.
\]
We next split the sum over $n$ by writing $n=p^kn'$ with $p\nmid n'$ and then make the change of variables $k\mapsto r-k$. The fact that 
\begin{equation}\label{eqn:G2gcd}
G_2(gA,gB,gC)=gG_{2}(A,B,C)
\end{equation}
 yields
\begin{align*}
|R_{Q_{d,} p^r}(m)| &= \frac{1}{p^r} \sum_{k=0}^r p^{3k}\sum_{n'\in (\Z/p^{r-k}\Z)^{\times}} e^{-\frac{2\pi i n'm}{p^{r-k}}}G_2(n', 0, p^{r-k})^2G_2(n'd^2, 0, p^{r-k})\\
&= p^{2r} \sum_{k=0}^r p^{-3k}\sum_{n'\in (\Z/p^{k}\Z)^{\times}} e^{-\frac{2\pi i n'm}{p^{k}}}G_2(n', 0, p^{k})^2G_2(n'd^2, 0, p^{k}).
\end{align*}
For $d = p^{\alpha}d'$ with $p\nmid d'$, it follows from \eqref{eqn:G2gcd} that
\begin{align}\label{G_2 evaluation}
G_2(n'd^2, 0, p^{k}) &= (p^{2\alpha}, p^k) \, G_2\(\frac{n'd^2}{(p^{2\alpha}, p^k)}, 0, \frac{p^k}{(p^{2\alpha}, p^k)}\)\nonumber\\
&= \begin{cases}
p^k & \text{for } k\leq 2\alpha, \\
p^{2\alpha}G_2(n'd'^2, 0, p^{k-2\alpha}) & \text{for } k>2\alpha.
\end{cases}
\end{align}
We can therefore rewrite
\begin{align}\label{Eisenstein computation 1}
\frac{1}{p^{2r}}|R_{Q_{d,} p^r}(m)| &= \sum_{k=0}^{2\alpha} p^{-2k}\sum_{n'\in (\Z/p^{k}\Z)^{\times}} e^{-\frac{2\pi i n'm}{p^{k}}}G_2(n', 0, p^{k})^2 \nonumber\\
&+ \sum_{k=2\alpha+1}^{r} p^{-3k + 2\alpha} \sum_{n'\in (\Z/p^{k}\Z)^{\times}} e^{-\frac{2\pi i n'm}{p^{k}}}G_2(n', 0, p^{k})^2G_2(n'd'^2, 0, p^{k-2\alpha}).
\end{align}
Utilizing the fact that for $C$ odd and for $\gcd(A,C)=1$,
\[
G_2(A,0,C)=\varepsilon_C\sqrt{C}\left(\frac{A}{C}\right),
\]
we have, for $p\neq 2$, 
\begin{equation}\label{eqn:G2(1)}
G_2(n', 0, p^k)=\begin{cases} 
p^{\frac{k}{2}} &\text{if }k\equiv 0 \Mod{2}),\\ 
\varepsilon_p\left(\frac{n'}{p}\right) p^{\frac{k}{2}}&\text{if }k\equiv 1 \Mod{2}).
\end{cases}
\end{equation}
and
\begin{align}\label{eqn:G2(2)}
G_2(n'd'^2, 0, p^{k-2\alpha}) = G_2(n', 0, p^{k-2\alpha}) = \begin{cases} 
p^{\frac{k}{2} - \alpha} &\text{if }k\equiv 0 \Mod{2}),\\ 
\varepsilon_p\left(\frac{n'}{p}\right) p^{\frac{k}{2} - \alpha}&\text{if }k\equiv 1 \Mod{2}).
\end{cases}
\end{align}
For a multiplicative character $\chi$ and an additive character $\psi$, both of modulus $c$, we set
\[
\tau(\chi,\psi):=\sum_{x\Mod{c})} \chi(x)\psi(x).
\]
Let $\chi=\chi_{a,b}$ denotes a character of modulus $b$ induced from a character of conductor $a$. For simplicity, we abbreviate $\chi_{a,a} = \chi_a$ (we will always have either the principal character $\chi_{1,p^k}$ or the real Dirichlet character $\chi_{p,p^k}=\left(\frac{\cdot}{p}\right)$ coming from the Legendre symbol) and take $\psi(x)=\psi_{m, p^k}(x):=e^{\frac{2\pi i mx}{p^k}}$. Inserting \eqref{eqn:G2(1)} and \eqref{eqn:G2(2)} into \eqref{Eisenstein computation 1}, we obtain 
\begin{multline}\label{Eisenstein computation 2}
\frac{1}{p^{2r}}|R_{Q_{d,} p^r}(m)| = \sum_{k=0}^{2\alpha}(\delta_{2\mid k} + \delta_{2\nmid k}\varepsilon_p^2)\, p^{-k} \tau(\chi_{1, p^k}, \psi_{-m, p^k}) \\
+ \sum_{\substack{k=2\alpha+1 \\ k \text{ even}}}^{r} p^{\frac{-3k}{2}+\alpha} \tau(\chi_{1, p^k}, \psi_{-m, p^k}) 
+ \varepsilon_p^3 \sum_{\substack{k=2\alpha+1 \\ k \text{ odd}}}^{r} p^{\frac{-3k}{2}+\alpha} \tau(\chi_{p, p^k}, \psi_{-m, p^k}).
\end{multline}
We next evaluate $\tau(\chi,\psi)$. Letting $\chi^*$ denote the primitive character of modulus $m^*$ associated to the character $\chi$ of modulus $m$ and abbreviating $\tau(\chi^*):=\tau(\chi^*,\psi_{1,m^*})$, a corrected version of \cite[Lemma 3.2]{IwaniecKowalski} yields
\[
\tau(\chi,\psi_{a,m})=\tau(\chi^*)\sum_{d \mid \gcd \left(a,\frac{m}{m^*}\right)}d \chi^*\left(\frac{m}{m^*d}\right)\overline{\chi^*\left(\frac{a}{d}\right)} \mu\left(\frac{m}{m^*d}\right).
\]
Hence we have (noting that $\left(\frac{n}{p} \right)=0$ if $p\mid n$) 
\begin{align}\label{eqn:tau1} 
\tau\left(\chi_{1,p^k},\psi_{-m,p^k}\right)&=\sum_{d\mid \gcd(m,p^k)} d \mu\left(\frac{p^{k}}{d}\right)=\begin{cases} 1 &\text{if }k=0,\\ -p^{k-1}&\text{if }\gcd(m, p^k)=p^{k-1},\\ p^{k}-p^{k-1}&\text{if }\gcd(m, p^{k})=p^k,\\ 0&\text{otherwise},\end{cases}
\end{align}
\begin{align}\label{eqn:taup} 
 \tau\left(\chi_{p,p^k},\psi_{-m,p^k}\right)&=\tau(\chi_p)\sum_{d\mid \gcd(m,p^{k-1})} d \chi_p\left(\frac{p^{k-1}}{d}\right)\chi_p\left(-\frac{m}{d}\right)\mu\left(\frac{p^{k-1}}{d}\right)\nonumber\\
&=\begin{cases} p^{k-1}\tau(\chi_p)\chi_p\left(-\frac{m}{p^{k-1}}\right) &\text{if }\ord_p(m)=k-1\\
0&\text{otherwise}.
\end{cases}\nonumber\\
&=\begin{cases} \varepsilon_p^3 p^{k-\frac{1}{2}}\chi_p\left(\frac{m}{p^{k-1}}\right) &\text{if }\ord_p(m)=k-1\\
0&\text{otherwise},
\end{cases}
\end{align}
where in the last step we have used Gauss's evaluation $\tau(\chi_p)=\varepsilon_p \sqrt{p}$. Now for $m=p^Rm'$ with $p\nmid m'$, it follows from \eqref{eqn:tau1} and \eqref{eqn:taup} that each term with $k>R+1$ in \eqref{Eisenstein computation 2} vanishes. If $R < 2\alpha$, we use \eqref{eqn:tau1} to obtain
\begin{align*}
\lim_{r \to \infty} \frac{1}{p^{2r}}|R_{Q_{d,} p^r}(m)| &= 1 + \sum_{k=1}^R (\delta_{2\mid k} + \delta_{2\nmid k} \, \varepsilon_p^2) p^{-k} (p^k - p^{k-1}) - \delta_{2\nmid R} \, p^{-1} - \delta_{2\mid R} \, \varepsilon_p^2 \, p^{-1}\\
&= 1 + (1 - p^{-1})\left( \left[ \frac{R}{2} \right] + \varepsilon_p^2 \left[\frac{R+1}{2}\right]\right) - \delta_{2\nmid R} \, p^{-1} - \delta_{2\mid R} \, \varepsilon_p^2 \, p^{-1}.
\end{align*}
Finally for $R \geq 2\alpha$, we insert \eqref{eqn:tau1} and \eqref{eqn:taup} into \eqref{Eisenstein computation 2} to conclude
\begin{align*}
\lim_{r \to \infty} \frac{1}{p^{2r}}|R_{Q_{d,} p^r}(m)| = 1 + (1 + \varepsilon_p^2)(1-p^{-1})\alpha &+ \sum_{\substack{k = 2\alpha + 1 \\ k \text{ even}}}^R p^{\frac{-3k}{2}+\alpha}(p^k - p^{k-1}) \\
&- \delta_{2\nmid R} \, p^{\alpha - \frac{R+3}{2}}
+ \delta_{2\mid R} \, \varepsilon_p^2 \, p^{\alpha - \frac{R}{2} -1} \chi_p(m')\\
=  1 + (1 + \varepsilon_p^2)(1-p^{-1})\alpha &+ p^{-1} - p^{\alpha - [\frac{R}{2}] -1}\\ 
&- \delta_{2\nmid R} \, p^{\alpha - \frac{R+3}{2}}
+ \delta_{2\mid R} \, \varepsilon_p^2 \, p^{\alpha - \frac{R}{2} -1} \chi_p(m')
\end{align*}
This completes the proof of the Lemma.
\end{proof} 
We next shift our attention in computing the local densities at $p = \infty$. In the following lemma, we relate the local densities of $Q_{d}$ and $Q$ at $p = \infty$.
\begin{lemma}\label{Lemma Density at infinity}
We have 
\begin{equation}\label{Density at infinity}
\beta_{Q_{d}, \infty} = \frac{1}{d}\beta_{Q, \infty}.
\end{equation}
\end{lemma}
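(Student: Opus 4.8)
The plan is to compute $\beta_{Q_d,\infty}(m)$ directly from the definition \eqref{Def local density}, where for $p=\infty$ the volumes are Lebesgue measures on $\R^3$ and $\R$ respectively. Writing $U_t = (m-t, m+t)$ for small $t>0$, we have
\[
\beta_{Q_d,\infty}(m) = \lim_{t\to 0^+} \frac{\vol_{\R^3}\left(\{\x\in\R^3 : Q_d(\x)\in U_t\}\right)}{2t}.
\]
Since $Q_d(\x) = x_1^2 + x_2^2 + d^2 x_3^2$, the change of variables $y_3 = d x_3$ (with $d>0$) transforms the region $\{Q_d(\x)\in U_t\}$ into $\{y_1^2+y_2^2+y_3^2 \in U_t\}$ and scales the volume by a factor $1/d$ (the Jacobian of $x_3\mapsto y_3=dx_3$ contributes $dx_3 = \frac{1}{d}\,dy_3$). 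Hence $\vol_{\R^3}(\{Q_d(\x)\in U_t\}) = \frac{1}{d}\vol_{\R^3}(\{Q(\y)\in U_t\})$ for every $t$, and dividing by $2t$ and taking the limit gives \eqref{Density at infinity} immediately.

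Alternatively, and perhaps cleaner for the write-up, one can use the classical closed form for the archimedean density of a ternary form. For a positive definite ternary form $Q$ of determinant $\Delta$, the real density at $m>0$ is $\beta_{Q,\infty}(m) = \frac{2\pi m^{1/2}}{\sqrt{\Delta}}$ (the derivative of the volume of the ellipsoid $Q(\x)\le m$, whose volume is $\frac{4\pi}{3}\frac{m^{3/2}}{\sqrt{\Delta}}$). Since $\det Q = 1$ and $\det Q_d = d^2$, this yields $\beta_{Q_d,\infty}(m) = \frac{2\pi m^{1/2}}{\sqrt{d^2}} = \frac{1}{d}\cdot 2\pi m^{1/2} = \frac{1}{d}\beta_{Q,\infty}(m)$, which is exactly \eqref{Density at infinity}. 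Both arguments amount to the same scaling computation; I would present the substitution argument since it does not require quoting the ellipsoid-volume formula and makes transparent that the only input is $d>0$.

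This lemma is genuinely routine — there is no real obstacle. The one point worth a sentence of care is confirming that the limit defining $\beta_{Q_d,\infty}(m)$ exists and is finite (equivalently, that the volume of $\{Q_d(\x)\le u\}$ is differentiable in $u$ at $u=m$), which follows since this volume is $\frac{4\pi}{3}u^{3/2}/d$, a smooth function of $u>0$; the smoothness in turn reflects that the quadratic form is nondegenerate so the level sets have no thickening. After that, the scaling $y_3=dx_3$ finishes the proof, and this is essentially the archimedean analogue of the bijection used in the proof of Lemma \ref{Lemma for p not divide d}, except that here the scaling by $d$ is not volume-preserving and so contributes the factor $1/d$.
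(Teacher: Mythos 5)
Your substitution argument ($y_3 = dx_3$, Jacobian $1/d$) is exactly how the paper proves this: the paper writes the density as a difference of volumes of the sublevel sets $B_{Q_d,m\pm\epsilon}$ divided by $2\epsilon$ and applies the same change of variable $x_3 \mapsto x_3/d$ to get $\vol(B_{Q_d,\ell}) = \tfrac{1}{d}\vol(B_{Q,\ell})$. Your proof is correct and essentially identical in approach; the closed-form ellipsoid alternative you mention is fine but not needed.
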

\begin{proof}
We compute $\beta_{Q_{d}, \infty}$  using the open sets $U = U_\epsilon := (m - \epsilon, m + \epsilon)$. It follows from the definition \eqref{Def local density} that 
\begin{equation}\label{eqn 3.3}
\beta_{Q_{d}, \infty} = \lim_{\epsilon \to 0} \frac{\vol_{\R^3}(Q_{d}^{-1}(U_\epsilon))}{\vol_{\R}(U_{\epsilon})} =  \lim_{\epsilon \to 0} \frac{\vol(B_{Q_{d}, m + \epsilon}) - \vol(B_{Q_{d}, m - \epsilon})}{2\epsilon},
\end{equation}
where $B_{Q_{d}, \ell}$ denotes the set $\{\bm{\mathrm{x}}\in \R^3 : Q_{d}(\bm{\mathrm{x}}) \leq \ell \}$ with $\bm{\mathrm{x}} = (x_1, x_2, x_3)$. Now, it remains to compute the volumes to conclude the lemma. 
We have
$$\vol(B_{Q_{d}, \ell}) := \int_{-\sqrt{\ell}}^{\sqrt{\ell}}\int_{- \sqrt{\ell - x_1^2}}^{\sqrt{\ell - x_1^2}}\int_{-\frac{\sqrt{\ell - x_1^2 -x_2^2}}{d}}^{\frac{\sqrt{\ell - x_1^2 -x_2^2}}{d}}{\rm d}x_3{\rm d} x_2 {\rm d} x_1.$$
The change of variable $x_3 \mapsto \frac{x_3}{d}$ yields
\begin{equation}\label{eqn vol}
\vol(B_{Q_{d}, \ell}) :=\frac{1}{d} \int_{-\sqrt{\ell}}^{\sqrt{\ell}}\int_{- \sqrt{\ell - x_1^2}}^{\sqrt{\ell - x_1^2}}\int_{-\sqrt{\ell - x_1^2 -x_2^2}}^{\sqrt{\ell - x_1^2 -x_2^2}}{\rm d}x_3{\rm d} x_2 {\rm d} x_1 = \frac{1}{d}\vol(B_{Q, \ell}).
\end{equation}
Hence the lemma follows by plugging \eqref{eqn vol} with $\ell = m+\epsilon$ and $\ell = m-\epsilon$ simultaneously into \eqref{eqn 3.3}.
\end{proof}
\subsection{Main term computation}\label{Definition of omega}
We define the multiplicative function $\bm{\omega}(m, d)$ for each square-free $d$ with $2 \nmid d$ by the following
\begin{equation}\label{Omega definition}
\bm{\omega}(m, d) := \prod_{p\mid d} \bm{\omega}(m, p)
\end{equation}
where $$\bm{\omega}(m, p) = \frac{\beta_{Q_{d,} p}(m)}{\beta_{Q, p}(m)}.$$
We next compare the coefficients $a_{E_{d}}(m)$ and $a_{E}(m)$ by plugging \eqref{Omega definition} and \eqref{Density at infinity} into \eqref{Eisenstein product} to obtain the main term of $|\mathcal{A}_{d}|$. It follows from Lemma \ref{Lemma for p not divide d} that
\begin{equation}\label{Main term}
a_{E_{d}}(m) = \frac{a_{E_{d}}(m)}{a_{E}(m)} a_{E}(m)= \frac{ \bm{\omega}(m, d)}{d}r_3(m).
\end{equation}
We can now proceed to find explicit formulas for $\bm{\omega}(m, p)$, which can be evaluated from Lemma \ref{Lemma for local computation}. 
\begin{lemma}\label{Omega for p does not divide m}
For $p\nmid m$, we have 
\begin{equation*}
\bm{\omega}(m, p) = \begin{cases}
\frac{1-p^{-1}}{1+p^{-1}} & \text{if } p\equiv 1 \Mod 4), (\frac{m}{p}) = 1 \vspace{.2cm}\\
\frac{1+p^{-1}}{1-p^{-1}} & \text{if } p\equiv 3 \Mod 4), (\frac{m}{p}) = 1 \vspace{.2cm}\\
1 & \text{if } p\equiv 1 \Mod 4), (\frac{m}{p}) = -1 \vspace{.2cm}\\
1 & \text{if } p\equiv 3 \Mod 4), (\frac{m}{p}) = -1.
\end{cases}
\end{equation*}
\end{lemma}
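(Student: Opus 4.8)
The plan is to read $\beta_{Q_d,p}(m)$ and $\beta_{Q,p}(m)$ directly off Lemma~\ref{Lemma for local computation} and then form their quotient. Since $\bm{\omega}(m,d)$ is defined only for square-free $d$, every prime $p\mid d$ occurs to exponent $\alpha:=\ord_p(d)=1$ in $d$; and the hypothesis $p\nmid m$ means $R:=\ord_p(m)=0$. So I would simply specialise the two branches of Lemma~\ref{Lemma for local computation} to these values of $\alpha$ and $R$.

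For the numerator, $R=0<2=2\alpha$, so we are in the first branch. The floor terms $\left[\frac{R}{2}\right]$ and $\left[\frac{R+1}{2}\right]$ both vanish, $\delta_{2\nmid R}=0$ while $\delta_{2\mid R}=1$, leaving
\[
\beta_{Q_d,p}(m)=1-\varepsilon_p^2\,p^{-1}.
\]
For the denominator I take $d=1$ (so $Q_1=Q$ and $\alpha=0$); then $R=0\ge 0=2\alpha$ puts us in the second branch. There the term $(1+\varepsilon_p^2)(1-p^{-1})\alpha$ drops out, $p^{\alpha-1-[R/2]}=p^{-1}$ cancels the isolated $+p^{-1}$, $\delta_{2\nmid R}=0$, and $\delta_{2\mid R}=1$ with $m'=m$, giving
\[
\beta_{Q,p}(m)=1+\varepsilon_p^2\,p^{-1}\left(\tfrac{m}{p}\right).
\]

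Finally I would substitute $\varepsilon_p^2=1$ for $p\equiv 1\pmod 4$ and $\varepsilon_p^2=-1$ for $p\equiv 3\pmod 4$, and split according to $\left(\tfrac{m}{p}\right)\in\{\pm1\}$ (which is indeed $\pm1$, not $0$, precisely because $p\nmid m$). Dividing, $\beta_{Q_d,p}(m)/\beta_{Q,p}(m)$ equals $\tfrac{1-p^{-1}}{1+p^{-1}}$, $1$, $\tfrac{1+p^{-1}}{1-p^{-1}}$, $1$ in the four listed cases, which is exactly the asserted value of $\bm{\omega}(m,p)$. I do not expect any real obstacle: this is pure bookkeeping inside Lemma~\ref{Lemma for local computation}. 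The only point that needs a little care is tracking the sign $\varepsilon_p^2$ and noticing that the numerator is independent of $\left(\tfrac{m}{p}\right)$ while the denominator is not — which is precisely what produces the two nontrivial values and the two trivial ones.
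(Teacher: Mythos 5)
Your proposal is correct and is essentially the paper's own (implicit) argument: the paper obtains this lemma precisely by specializing Lemma \ref{Lemma for local computation} to $\alpha=1$, $R=0$ for $\beta_{Q_{d},p}(m)=1-\varepsilon_p^2p^{-1}$ and to $\alpha=0$, $R=0$ for $\beta_{Q,p}(m)=1+\varepsilon_p^2p^{-1}\left(\frac{m}{p}\right)$, then dividing. The only cosmetic point is that your concluding list of four values follows your own case ordering rather than the order displayed in the lemma, but the values agree case by case.
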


\begin{lemma}\label{Omega for p divide m in even powers}
For $m = p^{2\theta} m'$ with $\theta \geq 1$ and $(m', p) = 1$, we have
\begin{equation*}
\bm{\omega}(m, p) = \begin{cases}
\frac{3-p^{-1}}{1+p^{-1}} & \text{if } p\equiv 1 \Mod 4), (\frac{m'}{p}) = 1 \vspace{.2cm}\\
\frac{1+p^{-1} - 2p^{-\theta}}{1+p^{-1} - 2p^{-\theta -1}} & \text{if } p\equiv 3 \Mod 4), (\frac{m'}{p}) = 1 \vspace{.2cm}\\
\frac{3 - p^{-1} - 2p^{-\theta}}{1+p^{-1} - 2p^{-\theta -1}} & \text{if } p\equiv 1 \Mod 4), (\frac{m'}{p}) = -1 \vspace{.2cm}\\
1 & \text{if } p\equiv 3 \Mod 4), (\frac{m'}{p}) = -1.
\end{cases}
\end{equation*}
\end{lemma}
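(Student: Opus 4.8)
The plan is to read off both $\beta_{Q_d,p}(m)$ and $\beta_{Q,p}(m)$ from Lemma~\ref{Lemma for local computation} and then form the quotient $\bm{\omega}(m,p)=\beta_{Q_d,p}(m)/\beta_{Q,p}(m)$. Since $d$ is square-free with $p\mid d$, we have $\alpha=\ord_p(d)=1$; moreover the formula in Lemma~\ref{Lemma for local computation} depends on $d$ only through $\alpha$, the unit part $d'$ being absorbed by the change of variables $x_3\mapsto d'^{-1}x_3$ exactly as in the proof of Lemma~\ref{Lemma for p not divide d}, so that $\bm{\omega}(m,p)$ is well defined and may be computed with $\alpha=1$. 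Writing $m=p^{2\theta}m'$ gives $R=2\theta$, which is even and satisfies $R\geq 2\geq 2\alpha$ for $\alpha\in\{0,1\}$ because $\theta\geq 1$; hence in both computations we are in the second branch of Lemma~\ref{Lemma for local computation}, the $\delta_{2\nmid R}$ term drops, the $\delta_{2\mid R}$ term survives, and $[R/2]=R/2=\theta$.

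Substituting $\alpha=1$ and these values into the $R\geq 2\alpha$ branch yields
\[
\beta_{Q_d,p}(m)=1+(1+\varepsilon_p^2)(1-p^{-1})+p^{-1}-p^{-\theta}+\varepsilon_p^2\,p^{-\theta}\left(\tfrac{m'}{p}\right),
\]
while applying the same branch with $d=1$, i.e. $\alpha=0$, gives
\[
\beta_{Q,p}(m)=1+p^{-1}-p^{-1-\theta}+\varepsilon_p^2\,p^{-1-\theta}\left(\tfrac{m'}{p}\right).
\]
It then remains to split into the four cases according to $p\bmod 4$ and the sign of $\left(\tfrac{m'}{p}\right)$, using $\varepsilon_p^2=1$ when $p\equiv 1\pmod 4$ and $\varepsilon_p^2=-1$ when $p\equiv 3\pmod 4$. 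In each case both numerator and denominator collapse to one of $1+p^{-1}$, $1+p^{-1}-2p^{-\theta-1}$, $1+p^{-1}-2p^{-\theta}$, $3-p^{-1}$, or $3-p^{-1}-2p^{-\theta}$, and dividing produces the four stated values of $\bm{\omega}(m,p)$; in particular, when $p\equiv 3\pmod 4$ and $\left(\tfrac{m'}{p}\right)=-1$ the $p^{-\theta}$- and $p^{-1-\theta}$-contributions cancel, the two expressions coincide, and $\bm{\omega}(m,p)=1$.

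The computation is entirely mechanical, so there is no substantial obstacle; the only points that require a little care are verifying that one always lands in the regime $R\geq 2\alpha$ of Lemma~\ref{Lemma for local computation} (which is exactly where the hypothesis $\theta\geq 1$ is used) and tracking the sign $\varepsilon_p^2\left(\tfrac{m'}{p}\right)$ through the four cases, since it is this product of two signs that decides whether the $p^{-\theta}$ and $p^{-1-\theta}$ terms reinforce or cancel in numerator and denominator.
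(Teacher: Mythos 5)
Your proposal is correct and is exactly the derivation the paper intends: the paper states this lemma without written proof, saying only that it "can be evaluated from Lemma \ref{Lemma for local computation}", and your mechanical evaluation—taking the $R\geq 2\alpha$ branch with $R=2\theta$ even, $\alpha=1$ for the numerator and $\alpha=0$ for the denominator, then splitting on $\varepsilon_p^2$ and $\left(\frac{m'}{p}\right)$—reproduces all four stated values. Your preliminary remarks (well-definedness of $\bm{\omega}(m,p)$ via $\alpha=1$ for squarefree $d$, and $\theta\geq 1$ guaranteeing $R\geq 2\alpha$) are the right points of care and check out.
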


\begin{lemma}\label{Omega for p divide m in odd powers}
For $p^{2\theta-1} \mid\mid m$ with $\theta \geq 1$, we have
\begin{equation*}
\bm{\omega}(m, p) = \begin{cases}
\frac{3 - p^{-1} - p^{1-\theta} - p^{-\theta}}{1+p^{-1} - p^{-\theta} - p^{-\theta-1}} & \text{if } p\equiv 1 \Mod 4) \vspace{.2cm}\\
\frac{1 + p^{-1} - p^{1-\theta} - p^{-\theta}}{1+p^{-1} - p^{-\theta} - p^{-\theta-1}} & \text{if } p\equiv 1 \Mod 4).
\end{cases}
\end{equation*}
\end{lemma}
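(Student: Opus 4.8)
The plan is to obtain $\bm{\omega}(m,p)=\beta_{Q_d,p}(m)/\beta_{Q,p}(m)$ by a direct substitution into Lemma~\ref{Lemma for local computation}. Throughout set $R:=\ord_p(m)=2\theta-1$, which is odd, and recall that $\varepsilon_p^2=1$ when $p\equiv 1\pmod 4$ and $\varepsilon_p^2=-1$ when $p\equiv 3\pmod 4$. (We note in passing that the second case of the statement should read $p\equiv 3\pmod 4$; the ``$1$'' there is a typo.)

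First I would evaluate the denominator. For $Q=Q_1$ one has $\ord_p(1)=0$, so the alternative $R<2\alpha$ never occurs and we are always in the second case of Lemma~\ref{Lemma for local computation} with $\alpha=0$. Since $R$ is odd we have $[R/2]=\theta-1$ and $(R+1)/2=\theta$, and the term carrying $\delta_{2\mid R}$ — the only one involving the symbol $\left(\frac{m'}{p}\right)$ — drops out, leaving
\[
\beta_{Q,p}(m)=1+p^{-1}-p^{-\theta}-p^{-\theta-1}.
\]
This is exactly the common denominator appearing in the statement, and in particular it does not depend on $p$ modulo $4$.

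Next I would evaluate the numerator. Since $d$ is squarefree and $p\mid d$ we have $\ord_p(d)=1$, so in Lemma~\ref{Lemma for local computation} the dichotomy becomes $R<2$ versus $R\ge 2$, i.e.\ it splits according to whether $\theta=1$ or $\theta\ge 2$. For $\theta\ge 2$ the second case applies with $\alpha=1$; using $[R/2]=\theta-1$ and the oddness of $R$ it collapses to
\[
\beta_{Q_d,p}(m)=1+(1+\varepsilon_p^2)(1-p^{-1})+p^{-1}-p^{1-\theta}-p^{-\theta},
\]
which equals $3-p^{-1}-p^{1-\theta}-p^{-\theta}$ if $\varepsilon_p^2=1$ and $1+p^{-1}-p^{1-\theta}-p^{-\theta}$ if $\varepsilon_p^2=-1$; dividing by the denominator above gives the two stated formulas. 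For $\theta=1$ one is instead in the first case with $R=1$, which gives $\beta_{Q_d,p}(m)=1+\varepsilon_p^2(1-p^{-1})-p^{-1}$, namely $2(1-p^{-1})$ when $p\equiv 1\pmod 4$ and $0$ when $p\equiv 3\pmod 4$; dividing by $\beta_{Q,p}(m)=1-p^{-2}$ one checks this agrees with the $\theta=1$ specialization of the formulas just found, so the single expression in the statement is valid for all $\theta\ge 1$.

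Everything here is a mechanical substitution, and the only point that requires a modicum of care is the case split at $R=2\alpha$ in Lemma~\ref{Lemma for local computation}: even though the final answer is uniform in $\theta$, the numerator genuinely has to be inspected separately for $\theta=1$ and $\theta\ge 2$. The companion Lemmas~\ref{Omega for p does not divide m} and~\ref{Omega for p divide m in even powers} are handled in precisely the same way, the difference being that there $R$ is $0$ or even, so the $\delta_{2\mid R}$-term survives and is responsible for the dependence on $\left(\frac{m'}{p}\right)$.
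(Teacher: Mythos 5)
Your proposal is correct and follows exactly the route the paper intends: the paper gives no separate proof of this lemma, merely noting that it is evaluated from Lemma \ref{Lemma for local computation}, and your substitution (with $\alpha=0$ for the denominator, $\alpha=1$ for the numerator, and the separate check of the $\theta=1$ case where $R<2\alpha$) carries that out correctly. Your observation that the second case should read $p\equiv 3\pmod 4$ is also right; that is a typo in the statement.
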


\subsection{Cusp form contribution}
We denote the cuspidal part associated to $Q_{d}$ by $f_{d}$. Let the $m$-th Fourier coefficient of $f_{d}$ be $\bm{R}(m, d)$, which can be expressed as
\begin{equation}\label{Rmd definition}
\bm{R}(m, d) = r_{Q_{d}}(m) - a_{E_{d}}(m).
\end{equation} 
The following lemma provides an upper bound of $\bm{R}(m, d)$.
\begin{lemma}\label{Error term}
For any $\epsilon >0$
$$\bm{R}(m, d) \ll d^{45/14}m^{13/28+\epsilon}$$
uniformly for $4d^2 \leq m^{1/2}$.
\end{lemma}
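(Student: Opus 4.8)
The plan is to bound the cusp form coefficient $\bm{R}(m,d)$ by combining the standard Hecke/Deligne-type bound for half-integral weight cusp forms with explicit control of the level and the norm of $f_d$ as $d$ varies. Since $\Theta_{Q_d}$ is a modular form of weight $3/2$ on a congruence subgroup whose level is essentially $4d^2$ (the discriminant of $Q_d$), and $f_d$ is its cuspidal projection, we start from a bound of the shape $\bm{R}(m,d) \ll N^{A} m^{13/28+\epsilon}$ for a cusp form of weight $3/2$ and level $N$, where $13/28$ is the exponent coming from the work on Fourier coefficients of half-integral weight forms (this is the Conrey--Iwaniec / Blomer--Harcos range, giving $\theta = 3/7$ in the exponent $m^{\theta/2+\epsilon}$ for the non-Eisenstein part, hence $m^{13/28}$ after accounting for the trivial $m^{1/4}$). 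The factor $d^{45/14}$ must then be extracted from the dependence on the level and from the change-of-variables relating $f_d$ back to forms of bounded level.

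The key steps, in order, would be: (i) Identify the level and nebentypus of $\Theta_{Q_d}$ explicitly via \cite[Proposition 2.1]{Shimura} or Shimura's theory, getting level dividing $4d^2$ (up to a bounded factor), and note $f_d \in S_{3/2}(\Gamma_0(4d^2),\chi_d)$. (ii) Use the identity \eqref{Rmd definition} together with \eqref{Main term}, so that $\bm{R}(m,d) = r_{Q_d}(m) - \tfrac{\bm{\omega}(m,d)}{d} r_3(m)$; alternatively, work directly with $f_d$. (iii) Apply a Deligne-type / Duke--Iwaniec bound for individual Fourier coefficients of half-integral weight cusp forms that is explicit in the level, say $a_{f}(m) \ll_\epsilon (mN)^{\epsilon} N^{c} m^{13/28} \|f\|_2$ for a suitable absolute $c$, then bound $\|f_d\|_2$ in terms of $d$ and $m$ — here the Petersson norm of $\Theta_{Q_d}$ (or of its cusp part) grows polynomially in $d$, and this growth, together with $c$, is what produces the exponent $45/14$. (iv) Finally restrict to the range $4d^2 \le m^{1/2}$, which guarantees the Eisenstein main term genuinely dominates and lets us absorb lower-order terms into $m^\epsilon$.

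The main obstacle I expect is step (iii): obtaining a Fourier-coefficient bound for half-integral weight cusp forms that is simultaneously (a) of subconvex quality $m^{13/28+\epsilon}$ in the coefficient index, and (b) fully explicit and polynomial in the level $N \asymp d^2$, with a constant one can track through to $45/14$. The cleanest route is to invoke an existing hybrid bound (e.g.\ results building on Iwaniec's and on Blomer--Harcos--Michel type estimates, or a direct argument via the Petersson/Kuznetsov trace formula with Salié-sum main terms for weight $3/2$), and then carefully bound the spectral side's dependence on the level; alternatively one can deduce the half-integral weight bound from the integral weight case via the Shimura lift, but then one must control how the level and conductor transform under the lift, which again feeds into the $d$-exponent. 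A secondary technical point is ensuring uniformity: the implied constant must not depend on $d$, so every appeal to a modular-forms estimate has to be in a form with explicit level dependence, which rules out black-box citations that hide constants.

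\textbf{Where the numerology comes from (sketch).} With $N \asymp d^2$ and a coefficient bound $a_f(m) \ll (Nm)^\epsilon N^{1/2} m^{13/28}$ for $f$ normalized by $\|f\|_2 \asymp 1$, one still needs to account for the normalization of $\Theta_{Q_d}$ itself: its cusp part has $L^2$-norm growing like a power of $d$, and writing everything out gives $\bm{R}(m,d) \ll d^{?} m^{13/28+\epsilon}$ with the exponent of $d$ coming out to $45/14$ once the level power ($d$ to the power $2 \cdot \tfrac{?}{}$) and the norm power are combined; the constraint $4d^2 \le m^{1/2}$ then ensures $d^{45/14} m^{13/28} \ll m^{1/2-\delta}$ for some $\delta>0$, i.e.\ the error is genuinely smaller than the main term $r_3(m) \gg m^{1/2-\epsilon_1}$ from \eqref{Siegel's bound}, which is what the subsequent sieve argument requires. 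I would present this last inequality explicitly at the end of the proof to make the role of the hypothesis $4d^2 \le m^{1/2}$ transparent.
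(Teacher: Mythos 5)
Your plan has a genuine gap at exactly the step you flag as the obstacle, and it also runs into a structural problem specific to weight $3/2$. First, the structural issue: a blanket bound of the form $a_{f_d}(m)\ll_\epsilon N^{c}m^{13/28+\epsilon}$ for the \emph{full} cuspidal part of a ternary theta series is false in general, because in weight $3/2$ the cuspidal projection can contain unary theta series components (equivalently, the genus and spinor genus representation numbers can differ on spinor-exceptional square classes), and there the coefficients are of size $m^{1/2}$. Duke--Iwaniec/Blomer-type subconvex bounds of quality $m^{13/28}$ apply only to the part orthogonal to unary theta series, so your step (iii) as stated cannot go through without first removing that contribution. The paper deals with this by quoting \cite[102:10]{Omeara}: for $d$ odd and squarefree the form $Q_d=x_1^2+x_2^2+d^2x_3^2$ has one spinor genus in its genus, so $r(\spn Q_d,m)=r(\gen Q_d,m)$ and the problematic component is absent. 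Second, the quantitative core of the lemma --- a bound uniform in the level with a trackable exponent --- is not something you derive; you leave both the level exponent and the norm of $f_d$ as unknowns and assert the numerology ``comes out to $45/14$.'' In the paper this is precisely the content of the citation: the last part of \cite[Theorem 1]{Blomer} gives $r(\spn Q_d,m)-r_{Q_d}(m)\ll (4d^2)^{45/28}m^{13/28+\epsilon}$ uniformly for $4d^2\le m^{1/2}$, which is where both the exponent $45/14$ and the hypothesis $4d^2\le m^{1/2}$ come from (the latter is a condition in Blomer's theorem, not merely a convenience for dominating the main term).

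Once these two inputs are in place the proof is immediate: by the Siegel--Weil identity \eqref{eqn:SiegelWeil} the Eisenstein coefficient $a_{E_d}(m)$ equals the genus average $r(\gen Q_d,m)$, so
\begin{equation*}
\bm{R}(m,d)=r_{Q_d}(m)-a_{E_d}(m)=r_{Q_d}(m)-r(\gen Q_d,m)=r_{Q_d}(m)-r(\spn Q_d,m)\ll d^{45/14}m^{13/28+\epsilon}.
\end{equation*}
So the missing ideas in your write-up are (a) the identification of $a_{E_d}(m)$ with the genus (not just ``Eisenstein'') representation number via Siegel--Weil, (b) the spinor-genus-equals-genus fact that legitimizes applying a subconvex bound to the whole error, and (c) an actual citation or proof of a level-uniform estimate in place of the unresolved step (iii). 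Your closing remark about $4d^2\le m^{1/2}$ ensuring the error is smaller than $r_3(m)$ is a reasonable sanity check, but it is not the role that hypothesis plays in this lemma.
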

\begin{proof}
It follows from \cite[102:10]{Omeara} that the quadratic form $Q_{d}(\bm{\mathrm{x}}) = x_1^2 + x_2^2 + d^2 x_3^2$ with $\mu^2(d) =1$ and $2 \nmid d$, have only one spinor genus per genus. On the other hand, the last part of \cite[Theorem 1]{Blomer} yields that for any $\epsilon >0$,
$$r(\spn Q_{d}, m) - r_{Q_{d}}(m) \ll (4d^2)^{45/28} m^{13/28 + \epsilon}$$
uniformly for $4d^2 \leq m^{1/2}$. Hence these two facts together imply 
\begin{equation}\label{Upper bound}
r(\gen Q_{d}, m) - r_{Q_{d}}(m) \ll d^{45/14} m^{13/28 + \epsilon}.
\end{equation}
We can therefore conclude our lemma by inserting \eqref{Upper bound} into \eqref{eqn:SiegelWeil}.
\end{proof}

\section{Application of linear sieve}\label{S4}
Let $\mathscr{P}$ denotes the set of all odd primes. In this section, we seek estimates for the sifting function $S(\mathcal{A}, \mathscr{P}, z_0)$, which represents the number of elements in $\mathcal{A}$ that have no prime factors $p < z_0$ in $\mathscr{P}$. More formally, letting 
$$P(z_0) = \prod_{\substack{p<z_0\\ p \in \mathscr{P}}}p,$$
we want to estimate the following cardinality
\begin{equation*}
S(\mathcal{A}, \mathscr{P}, z_0) := \left|\{x_3 \in \mathcal{A} : (x_3, P(z_0)) = 1  \}\right|.
\end{equation*}
The following proposition provides an asymptotic formula for the cardinality of the set $\mathcal{A}_{d}$.
\begin{proposition}\label{Proposition 1}
Let $r_3(m)$ be the number of representation of $m$ with sum of three squares. Then for $2 \nmid d$, we have 
\begin{equation}\label{Asymptotic formula for Ad}
|\mathcal{A}_{d}| = \frac{\bm{\omega}(m, d)}{d}r_3(m) + \bm{R}(m, d),
\end{equation}
where $\bm{\omega}(m, d)$ and $\bm{R}(m, d)$ are defined in \eqref{Omega definition} and \eqref{Rmd definition} respectively.
For $0< \theta < 1/118$, the error term  $\bm{R}(m, d)$ satisfies
$$\sum_{d \leq m^\theta} \widetilde{\mu} (d)^2 |\bm{R}(m, d)| \ll m^{1/2 - \epsilon_2}$$
where $\widetilde{\mu} (d)$ is an arithmetic function defined as
\begin{equation*}
\widetilde{\mu} (d) = \begin{cases}
\mu (d) & \text{for } 2 \nmid  d, \\
0 & \text{otherwise,} 
\end{cases}
\end{equation*}
and $\epsilon_2>0$ is sufficiently small in terms of $\theta$.
\end{proposition}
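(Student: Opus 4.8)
The plan is to assemble the asymptotic formula \eqref{Asymptotic formula for Ad} directly from the machinery already in place, and then to reduce the error-term bound to a summation against the estimate of Lemma \ref{Error term}. For the asymptotic formula itself, observe that $|\mathcal{A}_d|$ counts representations $x_1^2+x_2^2+d^2x_3^2 = m$ with $x_3\in\N$; up to the harmless normalization coming from signs this is $r_{Q_d}(m)$, and by the decomposition $\Theta_{Q_d}=E_d+f_d$ we have $r_{Q_d}(m)=a_{E_d}(m)+\bm{R}(m,d)$, where $\bm{R}(m,d)$ is exactly the Fourier coefficient of the cusp form as defined in \eqref{Rmd definition}. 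The identity $a_{E_d}(m)=\frac{\bm{\omega}(m,d)}{d}r_3(m)$ is precisely \eqref{Main term}, which was derived from \eqref{Eisenstein product}, Lemma \ref{Lemma for p not divide d} (to handle $p=2$ and primes $p\nmid d$), Lemma \ref{Lemma Density at infinity} (to extract the factor $1/d$ from $p=\infty$), and the definition \eqref{Omega definition} of $\bm{\omega}(m,d)$ as the product over $p\mid d$ of the local ratios. So the displayed formula \eqref{Asymptotic formula for Ad} is really just bookkeeping; the only point needing a word of care is matching $|\mathcal{A}_d|$ with $r_{Q_d}(m)$ and confirming the conventions on non-negativity of the variables are consistent with those implicit in $r_3(m)$.

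The substantive part is the bound $\sum_{d\le m^\theta}\widetilde{\mu}(d)^2|\bm{R}(m,d)|\ll m^{1/2-\epsilon_2}$. First I would note that $\widetilde\mu(d)^2$ restricts the sum to squarefree odd $d$, which is exactly the range in which Lemma \ref{Error term} applies; moreover the hypothesis $0<\theta<1/118$ guarantees that for $d\le m^\theta$ we have $4d^2\le 4m^{2\theta}\le m^{1/2}$ once $m$ is large (since $2\theta<1/118<1/2$, with room to spare), so the uniformity condition $4d^2\le m^{1/2}$ in Lemma \ref{Error term} is met for every $d$ in the sum. Inserting the bound $\bm{R}(m,d)\ll d^{45/14}m^{13/28+\epsilon}$ gives
\[
\sum_{d\le m^\theta}\widetilde{\mu}(d)^2|\bm{R}(m,d)| \ll m^{13/28+\epsilon}\sum_{d\le m^\theta} d^{45/14} \ll m^{13/28+\epsilon}\, m^{\theta(45/14+1)} = m^{13/28 + 59\theta/14 + \epsilon}.
\]
It then remains to check that the exponent $13/28 + 59\theta/14$ is strictly less than $1/2$ for $\theta<1/118$: indeed $1/2 - 13/28 = 1/28$, so the requirement is $59\theta/14 < 1/28$, i.e. $\theta < 1/(2\cdot 59) = 1/118$, which is exactly the stated hypothesis. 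Choosing $\epsilon$ small compared to the slack $1/28 - 59\theta/14 > 0$ yields the claim with $\epsilon_2 = 1/28 - 59\theta/14 - \epsilon > 0$, which is sufficiently small in terms of $\theta$ as asserted.

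The main obstacle, such as it is, is not analytic but arithmetic-combinatorial: one must be scrupulous that the exponent arithmetic lines up so that the threshold $1/118$ is genuinely the right cutoff — i.e. that $45/14$ is the correct exponent of $d$ coming out of Lemma \ref{Error term} (which in turn traces back through \eqref{Upper bound} to the $(4d^2)^{45/28}$ in Blomer's bound and the one-spinor-genus-per-genus fact from \cite{Omeara}), and that summing $d^{45/14}$ over $d\le m^\theta$ contributes the extra $m^{\theta}$ from the count of $d$, giving total $d$-exponent $45/14+1 = 59/14$. A secondary point is to make sure the $m$-large hypothesis implicit in "sufficiently large integer" elsewhere in the paper is invoked only where needed (here only to guarantee $4m^{2\theta}\le m^{1/2}$), so that the proposition is stated cleanly; but this is routine. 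No sieve input is used in this proposition — the sieve enters only afterward in deducing Theorem \ref{Thm 1} from it.
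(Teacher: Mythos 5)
Your proposal is correct and follows essentially the same route as the paper: the asymptotic formula is exactly the combination of \eqref{Main term} with the definition \eqref{Rmd definition}, and the error bound is obtained, just as in the paper, by inserting Lemma \ref{Error term} and summing to get $D^{59/14}m^{13/28+\epsilon}$ with $D=m^{\theta}$, which is admissible precisely when $\theta<1/118$. Your explicit verification of the uniformity condition $4d^2\le m^{1/2}$ is a small extra care the paper leaves implicit, but it does not change the argument.
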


\begin{proof}
The asymptotic formula \eqref{Asymptotic formula for Ad} of $|\mathcal{A}_{d}|$ is an easy consequence of \eqref{Main term} and \eqref{Rmd definition}.
It follows from Lemma \ref{Error term} that
$$\sum_{d \leq D} \widetilde{\mu} (d)^2 |\bm{R}(m, d)| \ll D^{59/14} m^{13/28 + \epsilon}.$$
Now the conditions $0< \theta < 1/118$ and $D = m^{\theta}$ will immediately imply
$$\sum_{d \leq D} \widetilde{\mu} (d)^2 |\bm{R}(m, d)| \ll m^{1/2 - \epsilon}.$$
This completes the proof the proposition.
\end{proof}
We define $$V(z_0) = \prod_{p\mid P(z_0)}\left(1 - \frac{\bm{\omega}(m, p)}{p}\right).$$
Let $F$ and $f$ denotes the classical functions of linear sieve which are the continuous solutions of the following system of differential-difference equations
\begin{align*}
sF(s) &= 2e^{\gamma} && \text{if } \quad 0<s \leq 3,&&&\hfill\\
sf(s) &= 0 && \text{if } \quad 0<s \leq 2,&&&\hfill\\
\left(sF(s)\right)' &= f(s-1) && \text{if } \quad s>3,&&&\hfill\\
\left(sf(s)\right)' &= F(s-1) && \text{if } \quad s>2,&&&\hfill
\end{align*}
where $\gamma$ is the Euler constant. The following proposition provides the upper and lower bound of the sifting function $S(\mathcal{A}, \mathscr{P}, z_0)$.
\begin{proposition}\label{Proposition for sifting function}
For $z_0 \geq 3$ and $D^2 \geq z_0$, we have
\begin{equation*}
S(\mathcal{A}, \mathscr{P}, z_0) \geq r_3(m)V(z_0)\left(f(s) + \mathcal{O}\left(e^{\sqrt{L-s}} (\log D)^{-1/3}\right)\right) - \sum_{\substack{d \leq D \vspace{.05cm}\\ d \mid P(z_0)}} \widetilde{\mu} (d)^2\left|\bm{R}(m, d)\right|
\end{equation*}
and
\begin{equation*}
S(\mathcal{A}, \mathscr{P}, z_0) \leq r_3(m)V(z_0)\left(F(s) + \mathcal{O}\left(e^{\sqrt{L-s}} (\log D)^{-1/3}\right)\right) + \sum_{\substack{d \leq D \vspace{.05cm}\\ d \mid P(z_0)}} \widetilde{\mu} (d)^2\left|\bm{R}(m, d)\right|
\end{equation*}
where $L$ is an absolute constant and $s = \frac{\log D}{\log z_0}$.
\end{proposition}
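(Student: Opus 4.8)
The plan is to apply the fundamental lemma / main theorem of the linear sieve (in the form due to Jurkat--Richert, or Iwaniec's refinement) to the sieve problem $(\mathcal{A}, \mathscr{P}, z_0)$, where the role of the "density function" is played by $\bm{\omega}(m,d)$ and the expected main term for $|\mathcal{A}_d|$ is $X \bm{\omega}(m,d)/d$ with $X = r_3(m)$. First I would verify that the data of the problem fit the axioms of the linear sieve: (i) $\bm{\omega}$ is multiplicative on squarefree odd $d$ (this is the definition \eqref{Omega definition}); (ii) the dimension is $1$, i.e. there is a constant $L$ and a $z_0$ such that
\[
\prod_{u \leq p < v,\ p \in \mathscr{P}} \left(1 - \frac{\bm{\omega}(m,p)}{p}\right)^{-1} \leq \frac{\log v}{\log u}\left(1 + \frac{L}{\log u}\right)
\]
for $2 \leq u < v$; and (iii) the remainder $\bm{R}(m,d) = |\mathcal{A}_d| - X\bm{\omega}(m,d)/d$, as given by Proposition \ref{Proposition 1}, satisfies the level-of-distribution bound $\sum_{d \leq D,\ d\mid P(z_0)} \widetilde\mu(d)^2 |\bm{R}(m,d)| \ll r_3(m) (\log D)^{-A}$ for $D$ a suitable power of $m$ — which follows from Proposition \ref{Proposition 1} together with Siegel's lower bound $r_3(m) \gg m^{1/2-\epsilon_1}$ from \eqref{Siegel's bound}, since the saved power $m^{-\epsilon_2}$ dominates any power of $\log m$.

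For step (ii), the key input is the explicit evaluation of $\bm{\omega}(m,p)$ in Lemmas \ref{Omega for p does not divide m}, \ref{Omega for p divide m in even powers} and \ref{Omega for p divide m in odd powers}. From those formulas one reads off that $\bm{\omega}(m,p) = 1 + O(p^{-1})$ uniformly, and moreover that on average $\bm{\omega}(m,p)$ behaves like $1$: for $p \nmid m$ with $p \equiv 1 \pmod 4$ the two cases $\left(\frac{m}{p}\right) = \pm 1$ contribute $\frac{1-p^{-1}}{1+p^{-1}}$ and $1$, and similarly for $p \equiv 3 \pmod 4$, so that in all cases $\bm{\omega}(m,p) \leq 1 + O(p^{-1})$ and the product over a dyadic range telescopes against $\log v/\log u$ with a bounded constant $L$; the finitely many primes dividing $m$ contribute only an $O(1/\log u)$ error because each such factor is $1 + O(p^{-1})$ and $\prod_{p\mid m}(1+O(p^{-1}))$ is absorbed. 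I would quote a standard reference (e.g. Halberstam--Richert, \emph{Sieve Methods}, or Friedlander--Iwaniec, \emph{Opera de Cribro}) for the precise statement that these conditions imply the stated upper and lower bounds with the classical functions $F$ and $f$ and error term $O(e^{\sqrt{L-s}}(\log D)^{-1/3})$, where $s = \log D/\log z_0$.

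Once the axioms are checked, the conclusion is essentially a citation: the linear sieve gives
\[
S(\mathcal{A}, \mathscr{P}, z_0) \geq X V(z_0)\left(f(s) + O\!\left(e^{\sqrt{L-s}}(\log D)^{-1/3}\right)\right) - \sum_{\substack{d \leq D \\ d\mid P(z_0)}} \widetilde\mu(d)^2 |\bm{R}(m,d)|
\]
and the analogous upper bound with $f$ replaced by $F$ and the remainder sign flipped, with $X = r_3(m)$ and $V(z_0) = \prod_{p\mid P(z_0)}(1 - \bm{\omega}(m,p)/p)$ as defined. The main obstacle — really the only nonroutine point — is establishing the dimension-$1$ condition (ii) with a genuinely \emph{absolute} constant $L$, i.e. one independent of $m$; this requires controlling the fluctuation of the Legendre symbol $\left(\frac{m}{p}\right)$ in the formulas for $\bm{\omega}(m,p)$ uniformly in $m$, which is exactly where the pairing of the $\left(\frac{m}{p}\right) = 1$ and $\left(\frac{m}{p}\right) = -1$ cases in Lemma \ref{Omega for p does not divide m} is used: their product is $\frac{1-p^{-1}}{1+p^{-1}} \cdot 1 < 1$ (resp.\ $\frac{1+p^{-1}}{1-p^{-1}} \cdot 1 = 1 + O(p^{-2})$ after one more expansion), so summing $\bm{\omega}(m,p)/p$ over $p$ in a range differs from $\sum 1/p$ by a bounded amount regardless of $m$. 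Handling the primes $p \mid m$ (where Lemmas \ref{Omega for p divide m in even powers} and \ref{Omega for p divide m in odd powers} apply and $\bm{\omega}(m,p)$ can be as large as $\tfrac{3-p^{-1}}{1+p^{-1}} < 3$) requires noting there are $O(\log m/\log\log m)$ of them and each perturbs the product by a factor $1 + O(p^{-1})$, which is harmless.
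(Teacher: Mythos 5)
Your proposal is correct and follows essentially the same route as the paper: the paper likewise checks the linear-sieve hypotheses $0\leq \bm{\omega}(m,p)<p$ and the dimension-one condition \eqref{Linear sieve condition 2} with an absolute constant $L$ (read off from Lemmas \ref{Omega for p does not divide m}--\ref{Omega for p divide m in odd powers}), and then quotes Iwaniec's linear sieve with Rosser weights $\lambda_d^{\pm}$, handling the remainder exactly as in your citation via $|\lambda_d^{\pm}|\leq 1$ and the support condition $d\leq D$, $d\mid P(z_0)$, with the main term $X\bm{\omega}(m,d)/d$, $X=r_3(m)$, coming from Proposition \ref{Proposition 1}. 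The only deviation is cosmetic: your axiom (iii) on the level of distribution is not needed for this proposition, since the remainder sum is left explicit here and is only estimated later (via Proposition \ref{Proposition 1} and \eqref{Siegel's bound}) in the deduction of Theorem \ref{Thm 1}.
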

We prove the proposition by using Rosser's weights (cf. \cite{Iwaniec1}, \cite{Iwaniec2}). Fixing a positive integer $D$, we define two sequences $\{\lambda_d^{\pm}\}$ in a following way.
\begin{itemize}
\item[(i)]
$\lambda_1^{\pm} = 1$.
\item[(ii)]
$\lambda_d^{\pm} = 0$ if $d$ is not square-free.
\item[(iii)]
For $d= p_1 p_2 \cdots p_r$ with $p_1>p_2> \cdots >p_r$ and $2, 5 \nmid d$
\begin{equation*}
\lambda_d^{+} = \begin{cases}
(-1)^r & \text{ if } p_1 \cdots p_{2l} \ p_{2l+1}^3 <D \text{ whenever } 0 \leq l \leq \frac{r-1}{2}\\
0 & \text{ Otherwise }
\end{cases} 
\end{equation*}
and
\begin{equation*}
\lambda_d^{-} = \begin{cases}
(-1)^r & \text{ if } p_1 \cdots p_{2l-1} \ p_{2l}^3 <D \text{ whenever } 0 \leq l \leq \frac{r}{2}\\
0 & \text{ Otherwise }.
\end{cases} 
\end{equation*}
\end{itemize}
It can be deduced from Lemma \ref{Omega for p does not divide m}, Lemma \ref{Omega for p divide m in even powers} and Lemma \ref{Omega for p divide m in odd powers} that $\bm{\omega}(m, p)$ satisfies the following two inequalities, which are
\begin{equation}\label{Linear sieve condition 1}
0 \leq \bm{\omega}(m, p) < p
\end{equation}
for all primes $p$ and there exist an absolute constant $L$ independent of $m$ such that
\begin{equation}\label{Linear sieve condition 2}
\prod_{w \leq p < z_0} \left(1- \frac{\bm{\omega}(m, p)}{p}\right)^{-1} < \left(\frac{\log z_0}{\log w}\right)\left(1+ \frac{L}{\log w}\right)
\end{equation}
for every $2\leq w <z_0$.
 
Hence it follows from \cite[Lemma 3]{Iwaniec2} that the inequalities \eqref{Linear sieve condition 1} and \eqref{Linear sieve condition 2} yields the following Lemma which is crucial to obtain the upper and lower bound of the main term of the sifted function. 
\begin{lemma}\label{Lemma of Iwaniec}
We have 
\begin{equation*}
V(z_0) \geq \sum_{d \mid P(z_0)} \lambda_d^{-}\frac{\bm{\omega}(m, d)}{d} \geq V(z_0)\left(f(s) + \mathcal{O}\left(e^{\sqrt{L-s}} (\log D)^{-1/3}\right)\right)
\end{equation*}
whenever $z_0 \leq D^{1/2}$ and
\begin{equation*}
V(z_0) \leq \sum_{d \mid P(z_0)} \lambda_d^{+}\frac{\bm{\omega}(m, d)}{d} \leq V(z_0)\left(F(s) + \mathcal{O}\left(e^{\sqrt{L-s}} (\log D)^{-1/3}\right)\right)
\end{equation*}
whenever $z_0 \leq D$. Here $L$ is an absolute constant arising from \eqref{Linear sieve condition 2} and $s = \frac{\log D}{\log z_0}$.
\end{lemma}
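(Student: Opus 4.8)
The statement is the ``fake main term'' half of the Rosser--Iwaniec linear sieve, and the plan is to obtain it by invoking \cite[Lemma 3]{Iwaniec2} (cf. also \cite{Iwaniec1}) directly, with the density $\bm{\omega}(m,\cdot)$ in place of the abstract sieve weight and with sieving level $D$. That general lemma asserts precisely this: for a multiplicative function of sieve dimension one supported on a prescribed set of sifting primes, and for Rosser's upper and lower weights $\{\lambda_d^{\pm}\}$ of level $D$, the truncated sums $\sum_{d\mid P(z_0)}\lambda_d^{\pm}\bm{\omega}(m,d)/d$ are squeezed between $V(z_0)$ and $V(z_0)f(s)$ in the lower case (for $z_0\le D^{1/2}$) and between $V(z_0)$ and $V(z_0)F(s)$ in the upper case (for $z_0\le D$), with the standard error $e^{\sqrt{L-s}}(\log D)^{-1/3}$ and $s=\log D/\log z_0$. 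Since the sequences $\{\lambda_d^{\pm}\}$ constructed above are exactly these Rosser weights — the upper ones truncated by $p_1\cdots p_{2\ell}\,p_{2\ell+1}^{3}<D$, the lower ones by the shifted condition — and since $V(z_0)$ is precisely the normalising product of that lemma, everything reduces to checking its two hypotheses for $\bm{\omega}(m,\cdot)$.

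Those hypotheses are a size bound and a linear-dimension bound on $\bm{\omega}(m,\cdot)$, which have already been recorded here as \eqref{Linear sieve condition 1} and \eqref{Linear sieve condition 2}. The first, $0\le\bm{\omega}(m,p)<p$, is immediate from Lemmas \ref{Omega for p does not divide m}, \ref{Omega for p divide m in even powers} and \ref{Omega for p divide m in odd powers}, since every value listed there lies in $[0,3)$ while each $p\in\mathscr{P}$ has $p\ge 3$. The second, \eqref{Linear sieve condition 2}, says that $\bm{\omega}(m,\cdot)$ sieves with dimension one and a level constant $L$; the content, again read off from those three lemmas, is that for $p\nmid m$ one has $\bm{\omega}(m,p)=1+O(1/p)$ — equal to $1$ exactly when $\leg{m}{p}=-1$ — so such primes contribute only a convergent tail to the logarithmic sum underlying \eqref{Linear sieve condition 2}, while the remaining primes, the divisors of $m$, still satisfy $0\le\bm{\omega}(m,p)<3$, and it is their aggregate contribution that has to be bounded uniformly in $m$. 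Granting \eqref{Linear sieve condition 1} and \eqref{Linear sieve condition 2}, \cite[Lemma 3]{Iwaniec2} applies verbatim and yields the two displayed chains of inequalities.

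For orientation I would also recall, without reproducing it, the mechanism behind \cite[Lemma 3]{Iwaniec2}: one expands $\sum_{d\mid P(z)}\lambda_d^{\pm}\bm{\omega}(m,d)/d$ by iterating Buchstab's identity, the cubing constraint in the definition of $\lambda_d^{\pm}$ forcing each newly detached prime below a fixed power of $D$, so that after boundedly many steps the residual sum is negligible; the surviving terms are then matched — using \eqref{Linear sieve condition 2} to pass between the products $V(\cdot)$ at different scales — against the system of differential-difference equations defining $F$ and $f$, and the rate of that approximation is what produces the factor $e^{\sqrt{L-s}}(\log D)^{-1/3}$. Were one not content to quote the cited lemma, this combinatorial analysis of Rosser's weights would be the main labour; taking it as given, the single genuinely delicate point in the present circle of ideas is the uniform verification of \eqref{Linear sieve condition 2}, and in particular the uniform control of the combined effect of the prime divisors of $m$ — which is exactly what the explicit evaluations of $\bm{\omega}(m,p)$ in Lemmas \ref{Omega for p does not divide m}--\ref{Omega for p divide m in odd powers} are designed to make available.
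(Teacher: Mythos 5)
Your proposal follows essentially the same route as the paper: the lemma is obtained by verifying the two linear-sieve hypotheses \eqref{Linear sieve condition 1} and \eqref{Linear sieve condition 2} from the explicit evaluations of $\bm{\omega}(m,p)$ in Lemmas \ref{Omega for p does not divide m}--\ref{Omega for p divide m in odd powers} and then quoting \cite[Lemma 3]{Iwaniec2} for Rosser's weights, which is exactly what the paper does (your added sketch of the mechanism behind Iwaniec's lemma is extra orientation, not a divergence). The level of rigour is also the same as the paper's, including treating the uniform verification of \eqref{Linear sieve condition 2} as read off from those lemmas.
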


\subsection{Proof of the Proposition \ref{Proposition for sifting function}}
The basic inclusion-exclusion principle yields
\begin{equation}\label{Inclusion Exclusion}
S(\mathcal{A}, \mathscr{P}, z_0) = \sum_{d \mid P(z_0)} \widetilde{\mu} (d)|\mathcal{A}_{d}|
\end{equation}
Inserting the cardinality of the set $\mathcal{A}_{d}$ from Proposition \ref{Proposition 1} into \eqref{Inclusion Exclusion}, we obtain
\begin{equation*}
S(\mathcal{A}, \mathscr{P}, z_0) = \sum_{d \mid P(z_0)} \widetilde{\mu} (d)\frac{\bm{\omega}(m, d)}{d}r_3(m) + \sum_{d \mid P(z_0)} \widetilde{\mu} (d) \bm{R}(m, d).
\end{equation*}
Thus, Lemma \ref{Lemma of Iwaniec} yields for $z_0 \leq D^{1/2}$, the main term of $S(\mathcal{A}, \mathscr{P}, z_0)$ can be bounded as 
\begin{align}\label{Proposition main term}
r_3(m) V(z_0)\left(f(s) + O\left(e^{\sqrt{L-s}} (\log D)^{-1/3}\right)\right) \leq \sum_{d \mid P(z)}& \widetilde{\mu} (d)\frac{\bm{\omega}(m, d)}{d}r_3(m)\nonumber \\
& \leq r_3(m) V(z_0)\left(F(s) + O\left(e^{\sqrt{L-s}} (\log D)^{-1/3}\right)\right).
\end{align}
The error term of $S(\mathcal{A}, \mathscr{P}, z_0)$ can be estimated using Rosser's weights as 
\begin{equation*}
\sum_{d \mid P(z_0)} \widetilde{\mu} (d) \bm{R}(m, d) \leq \sum_{d \mid P(z_0)} \lambda_d^{+} \bm{R}(m, d)
\end{equation*}
Now observe that  $\lambda_d^{+} = 0$ for $d> D$. Hence we have
\begin{equation*}
\sum_{d \mid P(z_0)} \widetilde{\mu} (d) \bm{R}(m, d) \leq \sum_{\substack{d \leq D \vspace{.05cm}\\ d \mid P(z_0)}} \lambda_d^{+} \bm{R}(m, d)
\end{equation*}
We can therefore provide the bound for the absolute value of the error term as
\begin{equation}\label{Proposition error term}
\left|\sum_{d \mid P(z_0)} \widetilde{\mu} (d) \bm{R}(m, d)\right| \leq \sum_{\substack{d \leq D \vspace{.05cm}\\ d \mid P(z_0)}}\left| \lambda_d^{+}\right| \left|\bm{R}(m, d)\right| \leq \sum_{\substack{d \leq D \vspace{.05cm}\\ d \mid P(z_0)}} \widetilde{\mu} (d)^2\left|\bm{R}(m, d)\right|.
\end{equation}
Thus \eqref{Proposition main term} and \eqref{Proposition error term} together concludes the proof of the proposition.

\subsection{Proof of Theorem \ref{Thm 1}}
Letting $\theta < 1/118$ and fixing $D = m^{\theta}$, the Proposition \ref{Proposition 1} yields that the error term in Proposition \ref{Proposition for sifting function} can be bounded as 
\begin{equation}\label{Bound for error term}
\sum_{d \leq D} \widetilde{\mu} (d)^2 |\bm{R}(m, d)| \ll m^{1/2 - \epsilon_2}.
\end{equation}
On the other hand, for $z_0 = m^\gamma$ with $\gamma < 1/236$, Lemma \ref{Lemma of Iwaniec} implies that 
\begin{equation}\label{Bound for main term}
V(z_0) \gg \frac{1}{\log m}
\end{equation}
where $s = \frac{\log D}{\log z_0} > 2$. Applying the bounds \eqref{Siegel's bound}, \eqref{Bound for error term} and \eqref{Bound for main term} together in Proposition \ref{Proposition for sifting function}, we see that in order to prove Theorem \ref{Thm 1} it remains to show that  $f(s) >0$, where $f$ is a classical function of linear sieve. A numerical calculation shows that $f(s) >0$ for $s >2$. This completes the proof of Theorem \ref{Thm 1}.

\section{Representation of every integer by certain quaternary quadratic forms}\label{S5}
The set to be sieved is the following 
 $$\mathcal{A} := \{(z_1, z_2) \in \N^2 : x^2 + y^2 + (2^a 3^bz_1)^2 + (2^c 5^d z_2)^2= m \},$$
 where $x, y, a, b, c, d$ are any integers with $a$, $b$, $c$ and $d$ non-negative. Setting $x_1 =x$, $x_2 = y$ $x_3 = 2^a 3^bz_1$ and $x_4 = 2^c 5^dz_2$, we can write
$$\mathcal{A} = \{(x_3, x_4) \in \N^2 : x_1^2 + x_2^2 + x_3^2 + x_4^2= m \}.$$ 
For $\widetilde{x} = (x_3, x_4)$ and $\bm{\mathrm{d}} = (d_3, d_4)$, let $\widetilde{x} \equiv 0 \Mod \bm{\mathrm{d}})$ denotes the simultaneous conditions $x_3 \equiv 0 \Mod d_3)$ and $x_4 \equiv 0 \Mod d_4)$. In order to apply sieve theory, we need an asymptotic formula for the cardinality of the following set
$$\mathcal{A}_{\bm{\mathrm{d}}} := \{\widetilde{x} \in \mathcal{A} : \widetilde{x} \equiv 0 \Mod \bm{\mathrm{d}}) \} = \{(x_3, x_4) \in \N^2 : x_1^2 + x_2^2 + d_3^2 x_3^2 + d_4^2 x_4^2 = m \},$$
where $2, 3 \nmid d_3$ and $2, 5 \nmid d_4$. 
It needs some preparation to express the above cardinality in terms of main term and error term. We first consider the quadratic form
$$Q_{\bm{\mathrm{d}}}(\bm{\mathrm{x}}) = x_1^2 + x_2^2 + d_3^2 x_3^2 + d_4^2 x_4^2$$ 
where $\bm{\mathrm{x}} = (x_1, x_2, x_3, x_4)$ and $\bm{\mathrm{d}}=(d_3, d_4)$ satisfies $2,3 \nmid d_3$ and $2,5 \nmid d_4$. For simplicity, we abbreviate $Q_{(1,1)} = Q$. Let, the theta function associated to $Q_{\bm{\mathrm{d}}}$ be $\Theta_{Q_{\bm{\mathrm{d}}}}$ with the Fourier expansion 
$$\Theta_{Q_{\bm{\mathrm{d}}}}(\tau) = \sum_{m\geq 0} r_{Q_{\bm{\mathrm{d}}}}(m) q^n$$
for $\tau \in \H$. Here the Fourier coefficient $r_{Q_{\bm{\mathrm{d}}}}(m)$ denotes the number of representation of $m$ by  $Q_{\bm{\mathrm{d}}}$. As mentioned earlier, $\Theta_{Q_{\bm{\mathrm{d}}}}$ can be decomposed into two parts which are the Eisenstein series part and the cuspidal part respectively.  

\subsection{Eisenstein series contribution}
We denote the Eisenstein series part associated to $Q_{\bm{\mathrm{d}}}$ by $E_{\bm{\mathrm{d}}}$. It follows from \eqref{Local density} that the $m$-th Fourier coefficient of $E_{d}$ can be expressed as 
\begin{equation}\label{Eisenstein product : 2}
a_{E_{\bm{\mathrm{d}}}}(m) = \prod_{p} \beta_{Q_{\bm{\mathrm{d}},} p}(m)
\end{equation}  
where the product runs over all the primes including $\infty$. The definition \eqref{Def local density} of local representation density yields 
\begin{equation*}
\beta_{Q_{\bm{\mathrm{d}},} p}(m) = \lim_{r \to \infty}\frac{|R_{Q_{\bm{\mathrm{d}},} p^r}(m)|}{p^{3r}}
\end{equation*}
where $$R_{Q_{\bm{\mathrm{d}},} p^r}(m) := \{ \x \in (\Z/p^r \Z)^4 : Q_{\bm{\mathrm{d}}}(\x) \equiv m \Mod p^r) \}.$$ The following lemma is crucial to determine the local density of $Q_{\bm{\mathrm{d}}}$ at $p =2$. 
\begin{lemma}\label{Lemma for p not divide d : 2}
For $p \nmid d_3d_4$, the local density of $Q_{d}$ satisfies
$$\beta_{Q_{\bm{\mathrm{d}},} p} = \beta_{Q, p}.$$
In particular, we have $$\beta_{Q_{\bm{\mathrm{d}},} 2} = \beta_{Q, 2}.$$
\end{lemma}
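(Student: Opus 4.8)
The plan is to run the same elementary counting argument that proves Lemma~\ref{Lemma for p not divide d}, now with two simultaneous rescalings instead of one. First I would fix a prime $p$ with $p\nmid d_3d_4$ and an exponent $r\ge 1$. Since $p\nmid d_3$ and $p\nmid d_4$, both $d_3$ and $d_4$ are units modulo $p^r$, so there exist $d_3^{-1},d_4^{-1}\in\Z/p^r\Z$ with $d_3d_3^{-1}\equiv d_4d_4^{-1}\equiv 1\Mod p^r)$. I would then introduce the map
\[
\varphi\colon(\Z/p^r\Z)^4\to(\Z/p^r\Z)^4,\qquad (x_1,x_2,x_3,x_4)\mapsto(x_1,x_2,d_3^{-1}x_3,d_4^{-1}x_4),
\]
which is a bijection (its inverse multiplies the last two coordinates by $d_3$ and $d_4$) and satisfies $Q_{\bm{\mathrm{d}}}(\varphi(\x))\equiv Q(\x)\Mod p^r)$, since $d_3^2(d_3^{-1}x_3)^2\equiv x_3^2$ and $d_4^2(d_4^{-1}x_4)^2\equiv x_4^2$. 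Hence $\varphi$ carries $R_{Q,p^r}(m)$ bijectively onto $R_{Q_{\bm{\mathrm{d}}},p^r}(m)$, and in particular $|R_{Q,p^r}(m)|=|R_{Q_{\bm{\mathrm{d}}},p^r}(m)|$ for every $r$.

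Next I would divide both sides by $p^{3r}$, which is the correct normalization here because both $Q$ and $Q_{\bm{\mathrm{d}}}$ are quaternary, and let $r\to\infty$, using the displayed formula $\beta_{Q_{\bm{\mathrm{d}}},p}(m)=\lim_{r\to\infty}|R_{Q_{\bm{\mathrm{d}}},p^r}(m)|/p^{3r}$ from just before the statement, together with its $\bm{\mathrm{d}}=(1,1)$ specialization for $Q$. This yields $\beta_{Q_{\bm{\mathrm{d}}},p}(m)=\beta_{Q,p}(m)$ for every $m$ and every $p\nmid d_3d_4$. The ``in particular'' assertion is then immediate: the standing hypothesis on $\bm{\mathrm{d}}$ is that $2,3\nmid d_3$ and $2,5\nmid d_4$, so $2\nmid d_3d_4$, i.e. $p=2$ is an admissible prime and the general statement applies.

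There is essentially no obstacle in this argument; it is the quaternary analogue of a lemma already proved in \S\ref{S3}. The only points worth a word of care are that the bijection $\varphi$ is defined uniformly in $r$ (which works because $d_3$ and $d_4$ remain units modulo $p^r$ for all $r\ge 1$), and that one must use the denominator $p^{3r}$ rather than the ternary $p^{2r}$ appearing in Lemma~\ref{Lemma for p not divide d}. Alternatively, one could simply note that $d_3,d_4\in\Z_p^\times$ makes $Q_{\bm{\mathrm{d}}}$ and $Q$ equivalent over $\Z_p$, so they automatically have the same local density at $p$; but the explicit counting argument above is self-contained and matches the style of the preceding section.
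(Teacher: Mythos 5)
Your proposal is correct and is essentially the paper's own argument: the paper simply notes that the proof "follows similarly" to Lemma~\ref{Lemma for p not divide d}, i.e.\ the same unit-rescaling bijection you describe, applied to the last two coordinates with the quaternary normalization $p^{3r}$. Your explicit write-up (and the remark about $\Z_p$-equivalence) fills in exactly what the paper leaves implicit.
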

\begin{proof}
The proof of the lemma follows similarly as of the proof of Lemma \ref{Lemma for p not divide d}.
\end{proof}
We set $d_j = p^{\alpha_j}d_j'$ with $p\nmid d_j'$ and $m=p^Rm'$ with $p\nmid m'$. In the next lemma, we compute the local representation density $\beta_{Q_{\bm{\mathrm{d}},} p}$ at each odd prime $p$.
\begin{lemma}\label{Lemma for local computation:2}
Let $\alpha_3\leq \alpha_4$. Then for any odd prime $p$, we have
\begin{equation*}
\beta_{Q_{\bm{\mathrm{d}},} p} (m)= 
\begin{cases}
1 + (1 - p^{-1})\left( \left[ \frac{R}{2} \right] + \varepsilon_p^2 \left[\frac{R+1}{2}\right]\right)- \delta_{2\nmid R} \, p^{-1} - \delta_{2\mid R} \, \varepsilon_p^2 \, p^{-1}
& \text{if  } R < 2 \alpha_3,\vspace{.5cm}\\
\begin{aligned}
1 + (1 + \varepsilon_p^2)&(1-p^{-1})\alpha_3 + p^{-1} - p^{\alpha_3 -1- [\frac{R}{2}]}\\
& - \delta_{2\nmid R} \, p^{\alpha_3-1 - \frac{R+1}{2}}
+ \delta_{2\mid R} \, \varepsilon_p^2 \, p^{\alpha_3-1 - \frac{R}{2}} \left(\frac{m'}{p} \right)
\end{aligned}
&  \text{if  }  2 \alpha_3 \leq R < 2\alpha_4, \vspace{.5cm}\\
1 + (1 + \varepsilon_p^2)(1-p^{-1})\alpha_3 + p^{-1} - p^{\alpha_3 +\alpha_4 -1-R}(1+p^{-1})
& \text{if  } R \geq 2 \alpha_4.
\end{cases}
\end{equation*}
\end{lemma}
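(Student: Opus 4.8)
The plan is to follow verbatim the template established in the proof of Lemma~\ref{Lemma for local computation}, now with two "heavy" variables $x_3$ and $x_4$ whose coefficients are divisible by $p$ to orders $2\alpha_3$ and $2\alpha_4$ respectively. First I would run the same Gauss-sum unfolding: using orthogonality \eqref{Orthogonality of roots of unity} write
\[
|R_{Q_{\bm{\mathrm{d}},} p^r}(m)| = \frac{1}{p^r}\sum_{n\Mod{p^r})} e^{-\frac{2\pi i nm}{p^r}} G_2(n,0,p^r)^2\, G_2(nd_3^2,0,p^r)\, G_2(nd_4^2,0,p^r),
\]
then split $n = p^k n'$ with $p\nmid n'$, apply \eqref{eqn:G2gcd} and the change $k\mapsto r-k$ to get a factor $p^{3r}$ out front (matching the $p^{3r}$ normalization in this quaternary setting), and then evaluate $G_2(n'd_j^2,0,p^k)$ via \eqref{G_2 evaluation}: it equals $p^k$ for $k\le 2\alpha_j$ and $p^{2\alpha_j}G_2(n'd_j'^2,0,p^{k-2\alpha_j})$ for $k>2\alpha_j$.

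The new feature is that the behavior of the $k$-sum now has \emph{three} regimes according to where $k$ sits relative to $2\alpha_3\le 2\alpha_4$: for $k\le 2\alpha_3$ both heavy Gauss sums are trivially $p^k$; for $2\alpha_3<k\le 2\alpha_4$ one of them "activates"; for $k>2\alpha_4$ both activate. Correspondingly I would write $\frac{1}{p^{3r}}|R_{Q_{\bm{\mathrm{d}},}p^r}(m)|$ as a sum of three blocks, insert \eqref{eqn:G2(1)}--\eqref{eqn:G2(2)} to express each inner sum as a Gauss-type sum $\tau(\chi,\psi)$ with either the principal or the quadratic character mod $p^k$, and then invoke the evaluations \eqref{eqn:tau1} and \eqref{eqn:taup}. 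As before, the crucial observation is that $\tau(\chi_{1,p^k},\psi_{-m,p^k})$ and $\tau(\chi_{p,p^k},\psi_{-m,p^k})$ vanish for $k>R+1$, so only finitely many terms survive and the limit $r\to\infty$ is immediate. The three cases of the lemma then come from comparing $R$ against $2\alpha_3$ and $2\alpha_4$: when $R<2\alpha_3$ only the first block contributes and one recovers exactly the ternary formula (the heavy variables are invisible at this precision, as they should be); when $2\alpha_3\le R<2\alpha_4$ the first two blocks contribute and one gets the same shape as the $R\ge 2\alpha$ case of Lemma~\ref{Lemma for local computation} with $\alpha$ replaced by $\alpha_3$; when $R\ge 2\alpha_4$ all three blocks contribute and the extra heavy variable produces the cleaner closed form $1+(1+\varepsilon_p^2)(1-p^{-1})\alpha_3+p^{-1}-p^{\alpha_3+\alpha_4-1-R}(1+p^{-1})$.

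The main obstacle is purely bookkeeping in the third regime $k>2\alpha_4$: here the inner sum is $\sum_{n'}e^{-2\pi i n'm/p^k}G_2(n',0,p^k)^2 G_2(n',0,p^{k-2\alpha_3})G_2(n',0,p^{k-2\alpha_4})$, which after \eqref{eqn:G2(1)}--\eqref{eqn:G2(2)} becomes a multiple of $\tau$ of either the principal character (when the total number of odd-index Gauss factors is even) or the quadratic character (when it is odd). Since $G_2(n',0,p^k)^2$ always contributes $\varepsilon_p^{2k}$ times a power of $p$ and the two single factors $G_2(n',0,p^{k-2\alpha_3})$, $G_2(n',0,p^{k-2\alpha_4})$ each contribute a Legendre symbol $\left(\frac{n'}{p}\right)$ exactly when their modulus exponent is odd, one must track the parity of $k$ (equivalently of $k-2\alpha_3$ and $k-2\alpha_4$, which have the same parity as $k$) carefully; the product of the two Legendre symbols collapses to the principal character, so in fact in the third regime the quadratic character never appears, which is why the $\delta_{2\mid R}\left(\frac{m'}{p}\right)$ term present in the middle case disappears. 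After summing the resulting geometric series in $k$ from $2\alpha_4+1$ up to $R+1$ (using \eqref{eqn:tau1}) and combining with the contributions of the first two blocks (already computed exactly as in Lemma~\ref{Lemma for local computation}), the stated formula for $R\ge 2\alpha_4$ follows. I would then remark that the hypothesis $\alpha_3\le\alpha_4$ is harmless by symmetry of $Q_{\bm{\mathrm{d}}}$ in $x_3,x_4$, so the lemma in fact determines $\beta_{Q_{\bm{\mathrm{d}},}p}(m)$ for all odd $p$.
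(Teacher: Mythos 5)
Your proposal is correct and follows essentially the same route as the paper's proof: the same Gauss-sum unfolding and $p^{3r}$ normalization, the same three-block splitting of the $k$-sum at $2\alpha_3$ and $2\alpha_4$, and the same evaluation via \eqref{eqn:tau1} and \eqref{eqn:taup} after noting that terms with $k>R+1$ vanish, including the key observation that in the third block the two Legendre symbols multiply to the principal character so no $\left(\frac{m'}{p}\right)$ term survives there.
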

\begin{proof}
We first evaluate the absolute value of $R_{Q_{\bm{\mathrm{d}},} p^r}(m)$. It follows from \eqref{Orthogonality of roots of unity} that
\begin{align*}
|R_{Q_{\bm{\mathrm{d}},} p^r}(m)|&=\sum_{\substack { \x \in(\Z/p^r\Z)^4\\ Q_{\bm{\mathrm{d}}}(\bm{\mathrm{x}}) \, \equiv \, m \Mod p^r)}}1\\
& = \sum_{\x \in(\Z/p^r\Z)^4}\frac{1}{p^r} \sum_{n \Mod p^r)} e^{\frac{2\pi i n}{p^r}\left(Q_{\bm{\mathrm{d}}}(\bm{\mathrm{x}})-m\right)}\\
&= \frac{1}{p^r} \sum_{n\Mod p^r)} e^{-\frac{2\pi i nm}{p^r}} \sum_{\x\in(\Z/p^r\Z)^4}e^{\frac{2\pi i n}{p^r}Q_{\bm{\mathrm{d}}}(\bm{\mathrm{x}})}\\
&= \frac{1}{p^r} \sum_{n\Mod p^r)} e^{-\frac{2\pi i nm}{p^r}}\(\prod_{j=1}^2 \sum_{x_j\in \Z/p^r\Z}e^{\frac{2\pi i nx_j^2}{p^r}} \right) \left(\prod_{j=3}^4\sum_{x_j \in \Z/p^r\Z}e^{\frac{2\pi i nd_j^2x_j^2}{p^r}}\right)\\
&=\frac{1}{p^r} \sum_{n\Mod p^r)} e^{-\frac{2\pi i nm}{p^r}} G_2(n, 0, p^r)^2 \prod_{j=3}^4 G_2(nd_j^2, 0, p^r)
\end{align*}
We next split the sum over $n$ by writing $n=p^kn'$ with $p\nmid n'$ and make the change of variables $k\mapsto r-k$ to arrive at
\begin{align*}
|R_{Q_{\bm{\mathrm{d}},} p^r}(m)| &= \frac{1}{p^r} \sum_{k=0}^r p^{4k}\sum_{n'\in (\Z/p^{r-k}\Z)^{\times}} e^{-\frac{2\pi i n'm}{p^{r-k}}} G_2(n', 0, p^{r-k})^2 \prod_{j=3}^4G_2(n'd_j^2, 0, p^{r-k})\\
&= p^{3r} \sum_{k=0}^r p^{-4k}\sum_{n'\in (\Z/p^{k}\Z)^{\times}} e^{-\frac{2\pi i n'm}{p^{k}}}G_2(n', 0, p^{k})^2 \prod_{j=3}^4G_2(n'd_j^2, 0, p^{k}), 
\end{align*}
where in the last step we applied \eqref{eqn:G2gcd}. We can derive similarly as in \eqref{G_2 evaluation} that
\begin{align*}
G_2(n'd_j^2, 0, p^{k}) = \begin{cases}
p^k & \text{for } k\leq 2\alpha_j, \\
p^{2\alpha_j}G_2(n'd_j^2, 0, p^{k-2\alpha_j}) & \text{for } k>2\alpha_j.
\end{cases}
\end{align*}
Therefore for the assumption $\alpha_3 \leq \alpha_4$, we can write
\begin{align*}
\frac{1}{p^{3r}}|R_{Q_{\bm{\mathrm{d}},} p^r}(m)| &= \sum_{k=0}^{2 \alpha_3} p^{-2k}\sum_{n'\in (\Z/p^{k}\Z)^{\times}} e^{-\frac{2\pi i n'm}{p^{k}}}G_2(n', 0, p^{k})^2 \nonumber\\
&+ \sum_{k=2\alpha_3+1}^{2\alpha_4} p^{-3k + 2\alpha_3} \sum_{n'\in (\Z/p^{k}\Z)^{\times}} e^{-\frac{2\pi i n'm}{p^{k}}}G_2(n', 0, p^{k})^2G_2(n'd_3^2, 0, p^{k-2\alpha_3})\nonumber\\
&+ \sum_{k=2\alpha_4+1}^{r} p^{-4k + 2\alpha_3 + 2\alpha_4} \sum_{n'\in (\Z/p^{k}\Z)^{\times}} e^{-\frac{2\pi i n'm}{p^{k}}}G_2(n', 0, p^{k})^2\prod_{j=3}^4 G_2(n'd_j^2, 0, p^{k-2\alpha_j}).
\end{align*}
Thus the evaluation of quadratic Gauss sums in \eqref{eqn:G2(1)} and \eqref{eqn:G2(2)} yields
\begin{align*}
\frac{1}{p^{3r}}|R_{Q_{\bm{\mathrm{d}},} p^r}(m)| &= \sum_{k=0}^{2\alpha_3}(\delta_{2\mid k} + \delta_{2\nmid k}\varepsilon_p^2)\, p^{-k} \tau(\chi_{1, p^k}, \psi_{-m, p^k}) 
+ \sum_{\substack{k=2\alpha_3+1 \\ k \equiv 0 \Mod 2)}}^{2\alpha_4} p^{\frac{-3k}{2}+\alpha_3} \tau(\chi_{1, p^k}, \psi_{-m, p^k}) \nonumber\\
&+ \varepsilon_p^3 \sum_{\substack{k=2\alpha_3+1 \\ k \equiv 1 \Mod 2)}}^{2\alpha_4} p^{\frac{-3k}{2}+\alpha_3} \tau(\chi_{p, p^k}, \psi_{-m, p^k})+ \sum_{k=2\alpha_4+1}^{r}p^{-2k+\alpha_3+\alpha_4} \tau(\chi_{1, p^k}, \psi_{-m, p^k}).
\end{align*}
Inserting the value of $\tau(\chi_{1, p^k}, \psi_{-m, p^k})$ and $\tau(\chi_{p, p^k}, \psi_{-m, p^k})$ from \eqref{eqn:tau1} and \eqref{eqn:taup} in the above equation, it can be derived for $R < 2\alpha_3$ that
\begin{align*}
\lim_{r \to \infty} \frac{1}{p^{3r}}|R_{Q_{\bm{\mathrm{d}},} p^r}(m)| &= 1 + \sum_{k=1}^R (\delta_{2\mid k} + \delta_{2\nmid k} \, \varepsilon_p^2) p^{-k} (p^k - p^{k-1}) - \delta_{2\nmid R} \, p^{-1} - \delta_{2\mid R} \, \varepsilon_p^2 \, p^{-1}\nonumber\\
&= 1 + (1 - p^{-1})\left( \left[ \frac{R}{2} \right] + \varepsilon_p^2 \left[\frac{R+1}{2}\right]\right) - \delta_{2\nmid R} \, p^{-1} - \delta_{2\mid R} \, \varepsilon_p^2 \, p^{-1}.
\end{align*}
For $2\alpha_3 \leq R < 2\alpha_4$, we have
\begin{align*}
\lim_{r \to \infty} \frac{1}{p^{3r}}|R_{Q_{\bm{\mathrm{d}},} p^r}(m)| = 1 + (1 + \varepsilon_p^2)(1-p^{-1})\alpha_3 &+ \sum_{\substack{k = 2\alpha_3 + 1 \\ k \equiv 0 \Mod 2)}}^R p^{\frac{-3k}{2}+\alpha_3}(p^k - p^{k-1}) \\
&- \delta_{2\nmid R} \, p^{\alpha_3 - \frac{R+3}{2}}
+ \delta_{2\mid R} \, \varepsilon_p^2 \, p^{\alpha_3 - \frac{R}{2} -1} \chi_p(m')
\\
=  1 + (1 + \varepsilon_p^2)(1-p^{-1})\alpha_3 &+ p^{-1} - p^{\alpha_3 -1- [\frac{R}{2}]}\\ 
&- \delta_{2\nmid R} \, p^{\alpha_3-1 - \frac{R+1}{2}}
+ \delta_{2\mid R} \, \varepsilon_p^2 \, p^{\alpha_3-1 - \frac{R}{2}} \chi_p(m').
\end{align*}
Finally we consider the case $R \geq 2\alpha_4$ and obtain
\begin{align*}
\lim_{r \to \infty} \frac{1}{p^{2r}}|R_{Q_{\bm{\mathrm{d}},} p^r}(m)| &= 1 + (1 + \varepsilon_p^2)(1-p^{-1})\alpha_3 + p^{-1} - p^{\alpha_3 -\alpha_4 -1} \\
&+ \sum_{k=2\alpha_4+1}^R p^{-2k+\alpha_3+\alpha_4}(p^k-p^{k-1}) - p^{-2(R+1)+\alpha_3+\alpha_4+R}\\
&= 1 + (1 + \varepsilon_p^2)(1-p^{-1})\alpha_3 + p^{-1} - p^{\alpha_3 +\alpha_4 -1-R} - p^{\alpha_3 +\alpha_4 -2-R}
\end{align*}
This concludes the proof of the lemma.
\end{proof}
The next lemma relates the local densities of $Q_{\bm{\mathrm{d}}}$ and $Q$ at $p = \infty$.
\begin{lemma}\label{Lemma Density at infinity:2}
We have 
\begin{equation*}
\beta_{Q_{\bm{\mathrm{d}}}, \infty} = \frac{1}{d_3d_4}\beta_{Q, \infty}.
\end{equation*}
\end{lemma}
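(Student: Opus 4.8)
The plan is to follow the proof of Lemma~\ref{Lemma Density at infinity} essentially verbatim, now with four variables instead of three. First I would take the open sets $U = U_\epsilon := (m - \epsilon, m + \epsilon)$ and use the definition~\eqref{Def local density} of the local density to write
\[
\beta_{Q_{\bm{\mathrm{d}}}, \infty} = \lim_{\epsilon \to 0} \frac{\vol_{\R^4}\!\left(Q_{\bm{\mathrm{d}}}^{-1}(U_\epsilon)\right)}{\vol_{\R}(U_\epsilon)} = \lim_{\epsilon \to 0} \frac{\vol(B_{Q_{\bm{\mathrm{d}}}, m + \epsilon}) - \vol(B_{Q_{\bm{\mathrm{d}}}, m - \epsilon})}{2\epsilon},
\]
where $B_{Q_{\bm{\mathrm{d}}}, \ell}$ denotes the ellipsoidal region $\{\x \in \R^4 : Q_{\bm{\mathrm{d}}}(\x) \leq \ell\}$ with $\x = (x_1, x_2, x_3, x_4)$, and an identical formula holds for $Q = Q_{(1,1)}$.

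Next I would reduce the lemma to a single volume identity. Writing $\vol(B_{Q_{\bm{\mathrm{d}}}, \ell})$ as an iterated integral over the region $x_1^2 + x_2^2 + d_3^2 x_3^2 + d_4^2 x_4^2 \leq \ell$ and performing the change of variables $x_3 \mapsto x_3/d_3$ and $x_4 \mapsto x_4/d_4$ (with Jacobian $1/(d_3 d_4)$) turns this region into the Euclidean ball $Q(\x) \leq \ell$, so that $\vol(B_{Q_{\bm{\mathrm{d}}}, \ell}) = \tfrac{1}{d_3 d_4}\vol(B_{Q, \ell})$ for every $\ell > 0$. Substituting $\ell = m + \epsilon$ and $\ell = m - \epsilon$ into the displayed limit then yields $\beta_{Q_{\bm{\mathrm{d}}}, \infty} = \tfrac{1}{d_3 d_4}\beta_{Q, \infty}$, as claimed.

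I do not expect any genuine obstacle here; the argument is a routine change of variables, exactly parallel to the ternary case. The only point worth a remark is the passage from the volume-quotient limit to a difference of volumes of the sublevel sets, which is legitimate because $\ell \mapsto \vol(B_{Q, \ell})$ is a smooth function of $\ell$ (indeed a constant multiple of $\ell^2$, since $Q$ is a positive definite quaternary form). In fact the scaling identity $\vol(B_{Q_{\bm{\mathrm{d}}}, \ell}) = \tfrac{1}{d_3 d_4}\vol(B_{Q, \ell})$ already forces both limits to exist simultaneously and to differ by exactly the factor $1/(d_3 d_4)$, so one need not evaluate either volume explicitly.
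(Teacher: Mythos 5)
Your proposal is correct and is exactly the argument the paper intends: the paper simply states that the quaternary case follows as in Lemma \ref{Lemma Density at infinity}, namely computing $\beta_{Q_{\bm{\mathrm{d}}},\infty}$ via the sets $U_\epsilon$ and rescaling $x_3\mapsto x_3/d_3$, $x_4\mapsto x_4/d_4$ to obtain $\vol(B_{Q_{\bm{\mathrm{d}}},\ell})=\tfrac{1}{d_3d_4}\vol(B_{Q,\ell})$. No issues.
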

\begin{proof}
The proof of the lemma follows similarly as of the proof of Lemma \ref{Lemma Density at infinity}.
\subsection{Main term computation}\label{Definition of omega:2}
We next define the multiplicative function $\bm{\omega}(m, \bm{\mathrm{d}})$ for each square-free $d_3$ and $d_4$ with $2, 3 \nmid d_3$ and $2, 5 \nmid d_4$ by the following :
\begin{equation}\label{Omega definition:2}
\bm{\omega}(m, \bm{\mathrm{d}}) := \prod_{p^{\nu} || d_3 d_4} \bm{\omega}_\nu(m, p)
\end{equation}
where $$\bm{\omega}_\nu(m, p) = \frac{\beta_{Q_{\bm{\mathrm{d}},} p}(m)}{\beta_{Q, p}(m)}.$$ 
Here $\nu$ can take the values $1$ or $2$. Lemma \ref{Lemma for p not divide d : 2} together with Lemma \ref{Lemma Density at infinity:2} implies that \eqref{Eisenstein product : 2} can be rephrased as
\begin{equation*}
a_{E_{\bm{\mathrm{d}}}}(m) = \frac{a_{E_{\bm{\mathrm{d}}}}(m)}{a_{E}(m)} a_{E}(m)= \frac{ \bm{\omega}(m, \bm{\mathrm{d}})}{d_3 d_4}r_4(m).
\end{equation*}
We can now proceed to find explicit formulas for $\bm{\omega}_\nu(m, p)$. Invoking Lemma \ref{Lemma for local computation:2} inside the definition of $\bm{\omega}_\nu(m, p)$ the following evaluations can be obtained.
\end{proof}

\begin{lemma}\label{Omega for p does not divide m:2}
For $p\nmid m$, we have 
\begin{equation*}
\bm{\omega}_{\nu}(m, p) = \begin{cases}
\frac{1+\varepsilon_p^2\chi_p(m')p^{-1}}{1-p^{-2}} & \text{for } \nu = 1 \vspace{.2cm}\\
\frac{1-\varepsilon_p^2 p^{-1}}{1-p^{-2}} & \text{for } \nu = 2. 
\end{cases}
\end{equation*}
\end{lemma}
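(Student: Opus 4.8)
The plan is to read the values of $\bm{\omega}_{\nu}(m,p)$ directly off Lemma \ref{Lemma for local computation:2} by specializing the parameters $R$, $\alpha_3$, $\alpha_4$. The hypothesis $p\nmid m$ means that in the notation $m=p^Rm'$ we have $R=0$ and $m'=m$, which collapses most of the terms appearing there.

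First I would compute the common denominator $\beta_{Q,p}(m)$. Taking $\bm{\mathrm{d}}=(1,1)$, i.e.\ $\alpha_3=\alpha_4=0$, we fall into the third case $R\geq 2\alpha_4$ of Lemma \ref{Lemma for local computation:2}, and at $R=0$ the formula gives $\beta_{Q,p}(m)=1+p^{-1}-p^{-1}(1+p^{-1})=1-p^{-2}$.

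Next I would handle the two subcases $\nu=1,2$. Since $d_3$ and $d_4$ are square-free, $p^{\nu}\|d_3d_4$ forces $\nu\in\{1,2\}$ with $\alpha_3,\alpha_4\in\{0,1\}$; because $Q_{\bm{\mathrm{d}}}$ is symmetric under interchanging the pairs $(d_3,x_3)\leftrightarrow(d_4,x_4)$, its local density depends only on the multiset $\{\alpha_3,\alpha_4\}$, so one may always arrange $\alpha_3\leq\alpha_4$ as Lemma \ref{Lemma for local computation:2} requires. For $\nu=2$ one has $\alpha_3=\alpha_4=1$, hence $R=0<2\alpha_3$ and the first case applies; evaluating $\left[\frac{R}{2}\right]=\left[\frac{R+1}{2}\right]=0$, $\delta_{2\nmid R}=0$, $\delta_{2\mid R}=1$ leaves $\beta_{Q_{\bm{\mathrm{d}}},p}(m)=1-\varepsilon_p^2p^{-1}$, and dividing by $1-p^{-2}$ gives $\frac{1-\varepsilon_p^2p^{-1}}{1-p^{-2}}$. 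For $\nu=1$ one may take $\alpha_3=0$, $\alpha_4=1$, so $2\alpha_3=0\leq R<2\alpha_4=2$ and the middle case applies; at $R=0$, $\alpha_3=0$ the term $-p^{\alpha_3-1-[\frac{R}{2}]}=-p^{-1}$ cancels the $+p^{-1}$, the $\delta_{2\nmid R}$ term vanishes, and $\delta_{2\mid R}\,\varepsilon_p^2\,p^{\alpha_3-1-\frac{R}{2}}\chi_p(m')=\varepsilon_p^2\chi_p(m')p^{-1}$, so $\beta_{Q_{\bm{\mathrm{d}}},p}(m)=1+\varepsilon_p^2\chi_p(m')p^{-1}$; dividing by $1-p^{-2}$ gives $\frac{1+\varepsilon_p^2\chi_p(m')p^{-1}}{1-p^{-2}}$.

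There is no genuine obstacle here: the result is a bookkeeping consequence of Lemma \ref{Lemma for local computation:2}. The only points that need care, and which I would flag explicitly, are (i) relabelling the indices so that $\alpha_3\leq\alpha_4$ before quoting the lemma, which is legitimate by the symmetry of $Q_{\bm{\mathrm{d}}}$ in its last two variables, and (ii) correctly evaluating the greatest-integer and characteristic functions at the boundary value $R=0$, where several competing powers of $p$ cancel.
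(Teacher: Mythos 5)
Your proposal is correct and follows exactly the route the paper intends: the paper derives this lemma by simply invoking Lemma \ref{Lemma for local computation:2} inside the definition of $\bm{\omega}_\nu(m,p)$, which is precisely your specialization $R=0$ with $(\alpha_3,\alpha_4)=(0,0)$, $(0,1)$, $(1,1)$ giving denominator $1-p^{-2}$ and the two stated numerators. Your explicit remarks about relabelling so that $\alpha_3\leq\alpha_4$ and about the boundary evaluations at $R=0$ are sound and only make the bookkeeping more careful than the paper's terse presentation.
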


\begin{lemma}\label{Omega for p divide m:2}
For $p^R || m$ with $R\geq 1$, we have
\begin{equation*}
\bm{\omega}_{\nu}(m, p) = \begin{cases}
\frac{1-p^{-R}}{1-p^{-R-1}} & \text{for } \nu = 1 \vspace{.2cm}\\
\frac{(1-p^{-1})(1+\varepsilon_p^2)+(1+p^{-1})(1-p^{1-R})} {(1+p^{-1})(1-p^{-R-1})} & \text{for } \nu = 2. 
\end{cases}
\end{equation*}
\end{lemma}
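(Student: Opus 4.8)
The proof is a direct computation from Lemma~\ref{Lemma for local computation:2}, carried out separately for $\nu=1$ and $\nu=2$. First I would unwind the meaning of $\nu$: since $d_3,d_4$ are square-free, an odd prime $p$ with $p^{\nu}\|d_3d_4$ either divides exactly one of $d_3,d_4$, in which case $\nu=1$ and, writing $d_j=p^{\alpha_j}d_j'$ as in Lemma~\ref{Lemma for local computation:2}, the unordered pair $\{\alpha_3,\alpha_4\}$ equals $\{0,1\}$; or it divides both, in which case $\nu=2$ and $\alpha_3=\alpha_4=1$. Since $Q_{(d_3,d_4)}(x_1,x_2,x_3,x_4)=Q_{(d_4,d_3)}(x_1,x_2,x_4,x_3)$, the local density $\beta_{Q_{\bm{\mathrm{d}}},p}(m)$ depends only on the unordered pair $\{\alpha_3,\alpha_4\}$, so in applying Lemma~\ref{Lemma for local computation:2} I may assume $\alpha_3\le\alpha_4$, i.e.\ $(\alpha_3,\alpha_4)=(0,1)$ when $\nu=1$ and $(\alpha_3,\alpha_4)=(1,1)$ when $\nu=2$. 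I would also record the denominator once and for all: applying Lemma~\ref{Lemma for local computation:2} with $\alpha_3=\alpha_4=0$ and $R\ge1\ge 2\alpha_4$, only the third branch occurs, giving $\beta_{Q,p}(m)=1+p^{-1}-p^{-R-1}(1+p^{-1})=(1+p^{-1})(1-p^{-R-1})$.

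For $\nu=1$, $(\alpha_3,\alpha_4)=(0,1)$ gives $2\alpha_3=0$ and $2\alpha_4=2$, so $R=1$ lies in the middle branch and $R\ge2$ in the third branch of Lemma~\ref{Lemma for local computation:2}. I would compute both sub-cases — noting that the $(1+\varepsilon_p^2)(1-p^{-1})\alpha_3$ term vanishes since $\alpha_3=0$, and that for $R=1$ the $\delta_{2\mid R}$ term drops while $\delta_{2\nmid R}$ contributes $-p^{-2}$ — and check that both collapse to $\beta_{Q_{\bm{\mathrm{d}}},p}(m)=(1+p^{-1})(1-p^{-R})$; dividing by $\beta_{Q,p}(m)$ yields $\bm{\omega}_1(m,p)=\frac{1-p^{-R}}{1-p^{-R-1}}$. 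For $\nu=2$, $(\alpha_3,\alpha_4)=(1,1)$ gives $2\alpha_3=2\alpha_4=2$, so the middle branch is vacuous: $R=1$ uses the first branch (value $1+\varepsilon_p^2(1-p^{-1})-p^{-1}$) and $R\ge2$ uses the third branch (value $1+p^{-1}+(1+\varepsilon_p^2)(1-p^{-1})-p^{1-R}(1+p^{-1})$). I would verify that the single expression $(1-p^{-1})(1+\varepsilon_p^2)+(1+p^{-1})(1-p^{1-R})$ reproduces both — for $R=1$ the second summand vanishes — and then divide by $\beta_{Q,p}(m)=(1+p^{-1})(1-p^{-R-1})$ to reach the claimed value of $\bm{\omega}_2(m,p)$.

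The computation is routine; the only delicate points are legitimizing the reduction $\alpha_3\le\alpha_4$ via the $(d_3,x_3)\leftrightarrow(d_4,x_4)$ symmetry of $Q_{\bm{\mathrm{d}}}$, and the boundary case $R=1$, which for each $\nu$ lands in a \emph{different} branch of Lemma~\ref{Lemma for local computation:2} than $R\ge2$ and so must be matched to the uniform closed form by hand; this $\delta$-term bookkeeping is the main (mild) obstacle. Throughout, $\varepsilon_p^2$ may be kept as a formal sign: it cancels in the $\nu=1$ answer and survives linearly in the $\nu=2$ answer.
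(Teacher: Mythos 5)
Your proposal is correct and follows exactly the route the paper intends: the paper gives no written proof beyond saying the evaluations follow by invoking Lemma~\ref{Lemma for local computation:2} inside the definition of $\bm{\omega}_\nu(m,p)$, and your computation does precisely that, with $(\alpha_3,\alpha_4)=(0,1)$ for $\nu=1$, $(1,1)$ for $\nu=2$, the symmetry reduction to $\alpha_3\le\alpha_4$, and the denominator $\beta_{Q,p}(m)=(1+p^{-1})(1-p^{-R-1})$ all checking out. The branch bookkeeping at $R=1$ versus $R\ge 2$ is handled correctly, so nothing is missing.
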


We next define another function
\[
\Omega(m, p) := 2 \bm{\omega}_1(m, p) - \frac{\bm{\omega}_2(m, p)}{p}
\]
and 
\[
\Omega(m, d) := \prod_{p\mid d} \Omega(m, p). 
\]
In the sieve theory calculation, the main term involves the following term : 
\[
W(z_0) := \prod_{p<z_0} \left(1-\frac{\Omega(m, p)}{p} \right).
\]
Therefore, we finally need the lower bound of $W(z_0)$ i.e, the upper bound of $\Omega(m, p)$. 
\begin{lemma}\label{Big omega bound}
We have
\[
\Omega(m, p) \leq \begin{cases}
2 + \frac{p+3}{p^2 -1} & \text{ for } \, p \nmid m\\
2 & \text{ for } \, p \mid m.
\end{cases}
\]
In particular, for $p \geq 5$ we have $\Omega(m, p) \leq \frac{5}{2}$. 
\end{lemma}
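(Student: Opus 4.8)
The plan is to reduce the bound on $\Omega(m,p)=2\bm{\omega}_1(m,p)-\bm{\omega}_2(m,p)/p$ to the explicit formulas for $\bm{\omega}_1$ and $\bm{\omega}_2$ recorded in Lemma~\ref{Omega for p does not divide m:2} and Lemma~\ref{Omega for p divide m:2}, and then treat the two cases $p\nmid m$ and $p\mid m$ separately by elementary estimates. First I would dispense with the case $p\nmid m$: by Lemma~\ref{Omega for p does not divide m:2} we have $\bm{\omega}_1(m,p)=(1+\varepsilon_p^2\chi_p(m')p^{-1})/(1-p^{-2})$ and $\bm{\omega}_2(m,p)=(1-\varepsilon_p^2 p^{-1})/(1-p^{-2})$, so
\[
\Omega(m,p)=\frac{2\bigl(1+\varepsilon_p^2\chi_p(m')p^{-1}\bigr)-p^{-1}\bigl(1-\varepsilon_p^2 p^{-1}\bigr)}{1-p^{-2}}.
\]
Since $\varepsilon_p^2\in\{1,-1\}$ and $\chi_p(m')\in\{1,-1\}$, the numerator is maximized by taking $\varepsilon_p^2\chi_p(m')=1$ and $\varepsilon_p^2=-1$ (to make the $+\varepsilon_p^2 p^{-2}$ term contribute $+p^{-2}$), giving numerator $\le 2+2p^{-1}-p^{-1}+p^{-2}=2+p^{-1}+p^{-2}$; dividing by $1-p^{-2}=(p^2-1)/p^2$ yields $\Omega(m,p)\le (2p^2+p+1)/(p^2-1)=2+(p+3)/(p^2-1)$, which is the claimed bound. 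One must double-check the sign bookkeeping here — this is the only genuinely fiddly point — since $\varepsilon_p^2\chi_p(m')$ and $\varepsilon_p^2$ are not independent (once $\varepsilon_p^2$ is fixed, $\chi_p(m')$ ranges freely), so the worst case is legitimately attainable.

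Next I would handle $p\mid m$. Write $p^R\|m$ with $R\ge 1$ and use Lemma~\ref{Omega for p divide m:2}: $\bm{\omega}_1(m,p)=(1-p^{-R})/(1-p^{-R-1})$ and $\bm{\omega}_2(m,p)=\bigl((1-p^{-1})(1+\varepsilon_p^2)+(1+p^{-1})(1-p^{1-R})\bigr)/\bigl((1+p^{-1})(1-p^{-R-1})\bigr)$. The key observations are that $\bm{\omega}_1(m,p)\le 1$ for all $R\ge 1$ (since $1-p^{-R}\le 1-p^{-R-1}$), so $2\bm{\omega}_1(m,p)\le 2$, and that $\bm{\omega}_2(m,p)\ge 0$ (the numerator is a sum of nonnegative quantities once one notes $1-p^{1-R}\ge 0$ when $R\ge 1$, with the edge case $R=1$ giving $1-p^0=0$; the $1+\varepsilon_p^2$ factor is $\ge 0$). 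Hence $\Omega(m,p)=2\bm{\omega}_1(m,p)-\bm{\omega}_2(m,p)/p\le 2$, which is exactly the second case. For the final assertion, when $p\ge 5$ one checks $2+(p+3)/(p^2-1)\le 2+8/24=2+1/3<5/2$ at $p=5$, and $(p+3)/(p^2-1)$ is decreasing in $p$, so the bound $\Omega(m,p)\le 5/2$ holds for all $p\ge 5$ in the case $p\nmid m$, while the case $p\mid m$ already gives $\Omega(m,p)\le 2\le 5/2$.

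I expect the main obstacle to be purely a matter of care rather than depth: verifying the nonnegativity $\bm{\omega}_2(m,p)\ge 0$ in the edge case $R=1$ (where one term vanishes), and correctly identifying the extremal choice of signs $\varepsilon_p^2,\chi_p(m')$ in the case $p\nmid m$, since a careless optimization could lose the exact constant $(p+3)/(p^2-1)$. Everything else is arithmetic simplification of rational functions of $p^{-1}$. I would therefore organize the write-up as: (1) substitute and simplify in the case $p\nmid m$, optimize over the two sign choices, and simplify to $2+(p+3)/(p^2-1)$; (2) in the case $p\mid m$, show $\bm{\omega}_1\le 1$ and $\bm{\omega}_2\ge 0$ and conclude $\Omega(m,p)\le 2$; (3) evaluate at $p=5$ and invoke monotonicity for the "in particular" clause.
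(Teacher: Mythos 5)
Your proposal is correct and follows essentially the same route as the paper, whose proof simply applies Lemma \ref{Omega for p does not divide m:2} and Lemma \ref{Omega for p divide m:2} and combines the resulting trivial bounds; your write-up just makes that arithmetic explicit (termwise maximization of the signs for $p\nmid m$, and $\bm{\omega}_1\leq 1$, $\bm{\omega}_2\geq 0$ for $p\mid m$). One small slip: in the case $p\nmid m$ the extremal choice is $\varepsilon_p^2=+1$ together with $\chi_p(m')=+1$, not $\varepsilon_p^2=-1$ as you wrote, but your subsequent computation uses $+p^{-2}$ anyway, so the bound $2+\frac{p+3}{p^2-1}$ and the rest of the argument stand.
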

\begin{proof}
We first apply Lemma  \ref{Omega for p does not divide m:2} and Lemma \ref{Omega for p divide m:2} to obtain trivial bounds of $\bm{\omega}_\nu(m, p)$ and then combine and simplify the bounds to conclude the lemma.
\end{proof}

\subsection{Cusp form contribution}
We denote the cuspidal part associated to $Q_{\bm{\mathrm{d}}}$ by $f_{\bm{\mathrm{d}}}$. Let the $m$-th Fourier coefficient of $f_{\bm{\mathrm{d}}}$ be $\bm{R}(m, \bm{\mathrm{d}})$, which can be expressed as
\begin{equation}\label{Rmd definition:2}
\bm{R}(m, \bm{\mathrm{d}}) = r_{Q_{\bm{\mathrm{d}}}}(m) - a_{E_{\bm{\mathrm{d}}}}(m).
\end{equation}
The following lemma provides an upper bound of $\bm{R}(m, \bm{\mathrm{d}})$.
\begin{lemma}\label{Upper bound on Rmd:2}
We have
\begin{equation*}
|\bm{R}(m, \bm{\mathrm{d}})| \leq  7.07 \times 10^{23} (d_3 d_4)^{6.01} m^{\frac{3}{5}}.
\end{equation*}
\end{lemma}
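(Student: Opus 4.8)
The plan is to follow the same route as the proof of Lemma \ref{Error term}, but this time keeping every constant explicit. The cusp form $f_{\bm{\mathrm{d}}}$ is the cuspidal projection of $\Theta_{Q_{\bm{\mathrm{d}}}}$, a modular form of weight $2$ on a congruence subgroup of level governed by $d_3 d_4$, so $\bm{R}(m, \bm{\mathrm{d}}) = r_{Q_{\bm{\mathrm{d}}}}(m) - a_{E_{\bm{\mathrm{d}}}}(m)$ is the $m$-th coefficient of a weight-$2$ cusp form. First I would invoke the Siegel--Weil identity \eqref{eqn:SiegelWeil}, which writes $a_{E_{\bm{\mathrm{d}}}}(m)$ as the genus average $r(\gen Q_{\bm{\mathrm{d}}}, m)$; then $|\bm{R}(m,\bm{\mathrm{d}})|$ is controlled by the difference $r(\gen Q_{\bm{\mathrm{d}}}, m) - r_{Q_{\bm{\mathrm{d}}}}(m)$ exactly as in Lemma \ref{Error term}. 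As there, since $Q_{\bm{\mathrm{d}}}$ is diagonal with pairwise coprime odd (modulo the relevant primes) coefficients, I would check via \cite[102:10]{Omeara} that each genus consists of a single spinor genus, so the genus average may be replaced by the spinor genus average with no loss.

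Next I would apply the explicit, quantitative bound for the difference between the number of representations by a quaternary form and its spinor genus average --- this is the content of \cite{BK} (the quantitative version of Br\"udern--Fouvry \cite{BrudernFouvry}, in the form needed for $\ell = 4$). That bound takes the shape $r(\spn Q_{\bm{\mathrm{d}}}, m) - r_{Q_{\bm{\mathrm{d}}}}(m) \leq C \cdot (\disc Q_{\bm{\mathrm{d}}})^{A} m^{3/5}$ for explicit $C$ and $A$; since $\disc Q_{\bm{\mathrm{d}}} = (d_3 d_4)^2$, this gives an exponent of $(d_3 d_4)^{2A}$. The numerology $2A = 6.01$ and the constant $7.07 \times 10^{23}$ must be read off by tracking the level $N \ll (d_3 d_4)^2$ through that reference's statement, including the implied constant from the $\epsilon$-loss (absorbed into $m^{3/5}$, slightly above the Deligne exponent $1/2$). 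The last step is purely arithmetic: feed $r(\spn Q_{\bm{\mathrm{d}}}, m) = r(\gen Q_{\bm{\mathrm{d}}}, m) = a_{E_{\bm{\mathrm{d}}}}(m)$ back in and combine constants.

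The main obstacle I expect is bookkeeping rather than mathematics: one has to be careful that the level of $Q_{\bm{\mathrm{d}}}$ enters \cite{BK} only through $d_3 d_4$ and not through some larger quantity (e.g. $\lcm$ or squared level), since the final exponent $6.01$ on $d_3 d_4$ is tight for the sieve to close with at most $369$ prime factors later on. A secondary point of care is the single-spinor-genus verification: \cite[102:10]{Omeara} must be applied to the specific diagonal form $x_1^2 + x_2^2 + d_3^2 x_3^2 + d_4^2 x_4^2$ at every prime dividing $2 d_3 d_4$, using that $d_3, d_4$ are squarefree and coprime to the small primes $2,3,5$ respectively; once that is in hand, the genus and spinor genus averages coincide and the rest is substitution.
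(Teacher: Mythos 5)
Your outline imports the ternary mechanism of Lemma \ref{Error term} into a setting where it is neither needed nor available, and in doing so it mischaracterizes the one input that carries all the content of the lemma. The paper's proof does not pass through the Siegel--Weil identity, spinor genera, or \cite[102:10]{Omeara} at all: since $Q_{\bm{\mathrm{d}}}$ is quaternary, $\bm{R}(m,\bm{\mathrm{d}})$ is just the $m$-th coefficient of the weight-$2$ cusp form $f_{\bm{\mathrm{d}}}$, and the quantitative input is \cite[Lemma 4.2]{BK}, which bounds that coefficient \emph{directly} by
$1.797\times 10^{21}\, m^{3/5}\, N_{\bm{\mathrm{d}}}^{\frac32+2\cdot 10^{-6}+\frac1{200}}\left(27\pi \Delta_{\bm{\mathrm{d}}}+16 N_{\bm{\mathrm{d}}}^{3}\right)^{1/2}$,
where $N_{\bm{\mathrm{d}}}=4\lcm(d_3^2,d_4^2)$ is the level and $\Delta_{\bm{\mathrm{d}}}=2^4 d_3^2 d_4^2$ the discriminant. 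The Blomer-type comparison of $r(\spn Q,m)$ with $r_Q(m)$ is a ternary device (it is what \cite{Blomer} provides and why the single-spinor-genus check via O'Meara is needed in Lemma \ref{Error term}); there is no statement in \cite{BK} of the shape $C\cdot(\text{discriminant})^{A}m^{3/5}$ for a spinor-genus defect of a quaternary form, so the numerology ``$2A=6.01$'' cannot be read off in the way you describe. Since the entire point of the lemma is the explicit constant $7.07\times 10^{23}$ and exponent $6.01$, deferring both to a citation whose actual form you have not matched leaves the claim unestablished; in the paper these numbers come from combining the level and discriminant dependencies, not from a single power of the discriminant.

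The technical worry you raise about whether the level enters only through $d_3d_4$ is resolved trivially rather than delicately: one bounds $\lcm(d_3^2,d_4^2)\le d_3^2d_4^2$, so $N_{\bm{\mathrm{d}}}^{\frac32+2\cdot10^{-6}+\frac1{200}}$ contributes $(d_3d_4)^{3+4\cdot 10^{-6}+\frac1{100}}$ up to an absolute constant, the factor $\left(432\pi d_3^2d_4^2+1024\,d_3^6d_4^6\right)^{1/2}$ contributes $4d_3d_4\left(27\pi+64\,d_3^4d_4^4\right)^{1/2}\le 4(27\pi+64)^{1/2}(d_3d_4)^{3}$, and multiplying out gives exactly $(d_3d_4)^{6.01}$ with constant $\le 7.07\times 10^{23}$. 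If you drop the Siegel--Weil/spinor-genus detour, quote \cite[Lemma 4.2]{BK} in its actual form, and carry out this short arithmetic, your argument becomes the paper's proof.
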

\begin{proof}
We write the quadratic form $Q_{\bm{\mathrm{d}}}(\bm{\mathrm{x}}) = \frac{1}{2}\bm{\mathrm{x}}^{T} A \bm{\mathrm{x}}$ where $\bm{\mathrm{x}} = (x_1, x_2, x_3, x_4)$ and $A$ is the diagonal matrix such that $A = [2, 2, 2d_3^2, 2d_4^2]$. 
For $N_{\bm{\mathrm{d}}}$ and $\Delta_{\bm{\mathrm{d}}}$ denoting the level and the discriminant respectively of the quadratic forms  $Q_{\bm{\mathrm{d}}}$, we have 
\begin{equation}\label{Disc and level}
N_{\bm{\mathrm{d}}} = 4 \lcm (d_3^2, d_4^2)  \hspace{.4cm} \text{and} \hspace{.4cm} \Delta_{\bm{\mathrm{d}}} = 2^4 d_3^2 d_4^2.
\end{equation}
It follows from \cite[Lemma 4.2]{BK} that
\begin{equation*}
|\bm{R}(m, \bm{\mathrm{d}})| \leq  1.797 \times 10^{21} m^{\frac{3}{5}} N_{\bm{\mathrm{d}}}^{\frac{3}{2}+ 2 \cdot 10^{-6}+ \frac{1}{200}}(27 \pi \Delta_{\bm{\mathrm{d}}} + 16 N_{\bm{\mathrm{d}}}^3)^{\frac{1}{2}}.
\end{equation*}
Thus by inserting the value of $N_{\bm{\mathrm{d}}}$ and $\Delta_{\bm{\mathrm{d}}}$ from \eqref{Disc and level} into the above equation, we can bound the absolute value of $\bm{R}(m, \bm{\mathrm{d}})$ as
\begin{align*}
|\bm{R}(m, \bm{\mathrm{d}})| \leq 1.448 \times 10^{22} m^{\frac{3}{5}} (\lcm (d_3^2, d_4^2))^{\frac{3}{2}+ 2 \cdot 10^{-6}+ \frac{1}{200}}(432 \pi  d_3^2 d_4^2 + 1024 (\lcm (d_3^2, d_4^2))^3)^{\frac{1}{2}}.
\end{align*}
The trivial bound $\lcm (d_3^2, d_4^2) \leq d_3^2d_4^2$ yields
\begin{align*}
|\bm{R}(m, \bm{\mathrm{d}})|&\leq 5.791 \times 10^{22} m^{\frac{3}{5}} (d_3d_4)^{4+4\cdot 10^{-6}+\frac{1}{100}}(27\pi + 64 d_3^4 d_4^4)^{\frac{1}{2}}\\
&\leq 5.791 \times 10^{22} (27\pi + 64)^{\frac{1}{2}} m^{\frac{3}{5}} (d_3d_4)^{6+4\cdot 10^{-6}+\frac{1}{100}}\\
&\leq 7.07 \times 10^{23} m^{\frac{3}{5}} (d_3d_4)^{6.01}.
\end{align*}
This completes the proof of the lemma.
\end{proof}

\section{Application of vector sieve}\label{S6}
Let $\mathscr{P}_7$ be the set of all primes starting from $7$.
For
$$P(z_0) := \prod_{\substack{p< z_0 \\ p \in \mathscr{P}_7}} p,$$
we let $P_3(z_0) := 5 P(z_0)$ and $P_4(z_0) := 3 P(z_0)$
and seek estimates for the sifting function
\begin{equation*}
S(\mathcal{A}, \mathscr{P}_7, z_0) := \left|\{(x_3, x_4) \in \mathcal{A} : (x_j, P_j(z_0)) = 1 \, \text{ for }  j= 3, 4\}\right|.
\end{equation*}
In the following proposition, we provide an asymptotic expansion for the cardinality of the set $\mathcal{A}_{\bm{\mathrm{d}}}$, which follows from \eqref{Omega definition:2} and \eqref{Rmd definition:2}.

\begin{proposition}\label{Proposition 1:2}
Let $r_4(m)$ be the number of representation of $m$ with sum of four squares. Then for $2,3 \nmid d_3$ and $2,5 \nmid d_4$, we have 
\begin{equation*}
|\mathcal{A}_{\bm{\mathrm{d}}}| = \frac{\bm{\omega}(m, \bm{\mathrm{d}})}{d_3d_4}r_4(m) + \bm{R}(m, \bm{\mathrm{d}}).
\end{equation*}
\end{proposition}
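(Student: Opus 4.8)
The plan is to follow the template already established for the ternary case in Proposition \ref{Proposition 1}, now in the quaternary setting. First I would observe that the set $\mathcal{A}_{\bm{\mathrm{d}}}$ counts, by definition, the number of $(x_3,x_4)\in\N^2$ with $x_1^2+x_2^2+d_3^2x_3^2+d_4^2x_4^2 = m$ for some $x_1,x_2\in\Z$; up to the usual bookkeeping over signs this is $r_{Q_{\bm{\mathrm{d}}}}(m)$, the $m$-th Fourier coefficient of $\Theta_{Q_{\bm{\mathrm{d}}}}$. The modular form $\Theta_{Q_{\bm{\mathrm{d}}}}$ has weight $2$ and decomposes as $E_{\bm{\mathrm{d}}} + f_{\bm{\mathrm{d}}}$, so that $r_{Q_{\bm{\mathrm{d}}}}(m) = a_{E_{\bm{\mathrm{d}}}}(m) + \bm{R}(m,\bm{\mathrm{d}})$, which is exactly \eqref{Rmd definition:2}. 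Hence everything reduces to identifying the Eisenstein coefficient $a_{E_{\bm{\mathrm{d}}}}(m)$ with $\frac{\bm{\omega}(m,\bm{\mathrm{d}})}{d_3d_4}r_4(m)$.

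For that identification I would expand $a_{E_{\bm{\mathrm{d}}}}(m)$ as the local product \eqref{Eisenstein product : 2}, $a_{E_{\bm{\mathrm{d}}}}(m) = \prod_p \beta_{Q_{\bm{\mathrm{d}}},p}(m)$. The strategy is to compare this product term-by-term with the analogous product $a_E(m) = \prod_p \beta_{Q,p}(m) = r_4(m)$ for $Q = Q_{(1,1)}$ (the latter equality being the classical Jacobi-type evaluation for four squares). For primes $p\nmid d_3d_4$, Lemma \ref{Lemma for p not divide d : 2} gives $\beta_{Q_{\bm{\mathrm{d}}},p}(m) = \beta_{Q,p}(m)$, so these factors cancel in the ratio. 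At $p=\infty$, Lemma \ref{Lemma Density at infinity:2} gives the factor $\frac{1}{d_3d_4}$. At the finitely many odd primes $p\mid d_3d_4$, the ratio $\beta_{Q_{\bm{\mathrm{d}}},p}(m)/\beta_{Q,p}(m)$ is, by definition, $\bm{\omega}_\nu(m,p)$ with $\nu = \ord_p(d_3d_4)\in\{1,2\}$ (using $2,3\nmid d_3$ and $2,5\nmid d_4$, which among other things rules out a common factor of $3$ or $5$ in $d_3d_4$, so each $p$ contributes a clean power). Multiplying over $p\mid d_3d_4$ gives precisely $\bm{\omega}(m,\bm{\mathrm{d}})$ as defined in \eqref{Omega definition:2}. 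Assembling: $a_{E_{\bm{\mathrm{d}}}}(m) = \bigl(\prod_{p\mid d_3d_4}\bm{\omega}_\nu(m,p)\bigr)\cdot\frac{1}{d_3d_4}\cdot\prod_p\beta_{Q,p}(m) = \frac{\bm{\omega}(m,\bm{\mathrm{d}})}{d_3d_4}r_4(m)$, which is the claimed main term.

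Combining this with \eqref{Rmd definition:2} yields $|\mathcal{A}_{\bm{\mathrm{d}}}| = \frac{\bm{\omega}(m,\bm{\mathrm{d}})}{d_3d_4}r_4(m) + \bm{R}(m,\bm{\mathrm{d}})$, as asserted. I would note that the explicit formulas for $a_{E_{\bm{\mathrm{d}}}}(m)$ are not actually needed here — only the multiplicative factorization of the local densities and the three comparison lemmas (Lemmas \ref{Lemma for p not divide d : 2}, \ref{Lemma Density at infinity:2}, and the definition of $\bm{\omega}_\nu$ via Lemma \ref{Lemma for local computation:2}); the resulting proof is therefore essentially identical to the one for Proposition \ref{Proposition 1}, with $d$ replaced by the pair $\bm{\mathrm{d}} = (d_3,d_4)$ and $r_3(m)$ by $r_4(m)$.

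The only genuine subtlety — and the step I would treat most carefully — is the precise convention relating the combinatorial count $|\mathcal{A}_{\bm{\mathrm{d}}}|$ (where $z_1,z_2$ range over $\N$, i.e.\ positive or nonnegative integers) to the representation number $r_{Q_{\bm{\mathrm{d}}}}(m)$ (where all four variables range over $\Z$). In the ternary argument this bookkeeping is absorbed implicitly, and here one must likewise be consistent about whether sign changes and zero values of $x_3,x_4$ are being counted, since this affects constant factors. Beyond that, the proof is a routine transcription; there is no real obstacle, which is presumably why the paper states it follows directly from \eqref{Omega definition:2} and \eqref{Rmd definition:2}.
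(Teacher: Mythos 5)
Your proposal is correct and follows essentially the same route as the paper, which treats the proposition as an immediate consequence of the identity $a_{E_{\bm{\mathrm{d}}}}(m)=\frac{\bm{\omega}(m,\bm{\mathrm{d}})}{d_3d_4}r_4(m)$ (established via Lemmas \ref{Lemma for p not divide d : 2} and \ref{Lemma Density at infinity:2} together with \eqref{Omega definition:2}) combined with \eqref{Rmd definition:2}. Your added care about the sign/positivity bookkeeping between $|\mathcal{A}_{\bm{\mathrm{d}}}|$ and $r_{Q_{\bm{\mathrm{d}}}}(m)$ is a reasonable point that the paper leaves implicit, but it does not change the argument.
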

The following bound on $\bm{\omega}_1(m, p)$ is crucial to prove Theorem \ref{Thm 2}.
\begin{lemma}\label{Boundinlog}
For $3<w\leq z_0$, we have
\begin{equation*}
\prod_{w\leq p < z_0}\left(1- \frac{\bm{\omega}_1(m, p)}{p}\right)^{-1} \leq \left(\frac{\log z_0}{\log w} \right)\left(1+\frac{6}{\log w} \right)
\end{equation*}
\end{lemma}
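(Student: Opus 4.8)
The goal is a Mertens-type estimate for the partial Euler product $\prod_{w\le p<z_0}\bigl(1-\tfrac{\bm\omega_1(m,p)}{p}\bigr)^{-1}$, with an explicit constant $L=6$ playing the role of the constant in \eqref{Linear sieve condition 2}. The plan is to take logarithms, expand $-\log(1-x)$ as a power series, and separate the contribution of the ``main'' part $\sum_{w\le p<z_0} \tfrac{1}{p}$ from the correction caused by the deviation of $\bm\omega_1(m,p)$ from $1$ and from the higher-order terms of the logarithm. First I would invoke Lemma \ref{Omega for p does not divide m:2} and Lemma \ref{Omega for p divide m:2}: for $p\nmid m$ we have $\bm\omega_1(m,p)=\tfrac{1+\varepsilon_p^2\chi_p(m')p^{-1}}{1-p^{-2}}$, and for $p^R\|m$ with $R\ge 1$ we have $\bm\omega_1(m,p)=\tfrac{1-p^{-R}}{1-p^{-R-1}}$. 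In both cases one reads off $0\le \bm\omega_1(m,p)\le 1+\tfrac{2}{p-1}\le 1+\tfrac{2}{p}\cdot\tfrac{p}{p-1}$, i.e. a clean bound of the shape $\bm\omega_1(m,p)=1+O(1/p)$ uniformly in $m$; for $p\mid m$ one even has $\bm\omega_1(m,p)\le 1$.

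Next I would write
\[
\log\prod_{w\le p<z_0}\Bigl(1-\tfrac{\bm\omega_1(m,p)}{p}\Bigr)^{-1}
=\sum_{w\le p<z_0}\frac{\bm\omega_1(m,p)}{p}+\sum_{w\le p<z_0}\sum_{k\ge 2}\frac{1}{k}\Bigl(\frac{\bm\omega_1(m,p)}{p}\Bigr)^{k},
\]
and compare with $\log\bigl(\tfrac{\log z_0}{\log w}\bigr)=\log\log z_0-\log\log w$. The standard tool here is Mertens' theorem in the sharp form $\sum_{p<t}\tfrac1p=\log\log t+M+O(1/\log t)$, which gives $\sum_{w\le p<z_0}\tfrac1p\le \log\log z_0-\log\log w+\tfrac{c_0}{\log w}$ for an explicit $c_0$. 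The deviation term $\sum_{w\le p<z_0}\tfrac{\bm\omega_1(m,p)-1}{p}$ is bounded, using $|\bm\omega_1(m,p)-1|\ll 1/p$, by $\sum_{p\ge w}O(1/p^2)\ll 1/(w\log w)$ or simply $\ll 1/\log w$; likewise the $k\ge 2$ tail is $\le \sum_{p\ge w}\tfrac{(\bm\omega_1/p)^2}{1-\bm\omega_1/p}\ll \sum_{p\ge w}1/p^2\ll 1/\log w$. Collecting constants, I obtain
\[
\log\prod_{w\le p<z_0}\Bigl(1-\tfrac{\bm\omega_1(m,p)}{p}\Bigr)^{-1}\le \log\frac{\log z_0}{\log w}+\frac{C}{\log w}
\]
for an explicit $C$, and then exponentiate using $e^{C/\log w}\le 1+\tfrac{C'}{\log w}$ valid for $w>3$ (where $\log w$ is bounded below by $\log 3>1$), arranging the numerical constants so that the final bound is $\le\bigl(\tfrac{\log z_0}{\log w}\bigr)\bigl(1+\tfrac{6}{\log w}\bigr)$.

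The main obstacle is purely quantitative: one must track every constant — the $O(1/\log t)$ error in Mertens, the bound on $\sum_{p\ge w}1/p^2$, and the convexity estimate $e^x\le 1+2x$ (say) on the relevant range — carefully enough that they aggregate to at most $6$. This is where the hypothesis $w>3$ (rather than $w\ge 2$) is used, since it keeps $1/\log w$ comfortably below $1$ and lets the exponentiation step go through with a modest constant. No new ideas are needed beyond explicit Mertens bounds and the elementary inequalities above; the work is in the bookkeeping of the numerics.
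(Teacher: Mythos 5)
Your plan is correct and is essentially the paper's own strategy: both arguments rest on the uniform pointwise bound $\bm{\omega}_1(m,p)\le \frac{p}{p-1}$ (with $\bm{\omega}_1(m,p)\le 1$ when $p\mid m$) read off from Lemmas \ref{Omega for p does not divide m:2} and \ref{Omega for p divide m:2}, followed by an explicit Mertens-type estimate with every constant tracked; note that your bound $1+\frac{2}{p-1}$ is an unnecessary weakening, since for $p\nmid m$ the exact value is at most $\frac{1+p^{-1}}{1-p^{-2}}=\frac{p}{p-1}=1+\frac{1}{p-1}$. The difference is only in the bookkeeping: the paper never takes logarithms, but instead uses $\left(1-\frac{1}{p-1}\right)^{-1}=\frac{p}{p-1}\left(1+\frac{1}{p(p-2)}\right)\le\frac{p}{p-1}\left(1+\frac{3}{p^2}\right)$, bounds $\prod_{w\le p<z_0}\frac{p}{p-1}$ by the explicit product-form Rosser--Schoenfeld inequalities (3.30) and (3.26) of \cite{Rosser}, and controls $\prod_{p\ge w}\left(1+\frac{3}{p^2}\right)$ elementarily via $3^{\omega(n)}\le 1.614\sqrt{n}$, whereas you pass to logarithms, use the explicit sum form $\sum_{p<t}\frac1p=\log\log t+M+O(1/\log t)$, and exponentiate at the end. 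Both routes go through, and the slack in the constant $6$ is generous (in the paper the three correction factors multiply to at most $1+\frac{6}{\log w}$ with ample room), but keep in mind that the deferred numerics are the entire content of the lemma: you must quote an explicit sum-form Mertens inequality valid for all $w>3$ and all $z_0>w$ (some standard explicit versions are stated only for arguments above a few hundred, in which case the finitely many small ranges need a direct check -- the product-form bounds the paper quotes hold for all arguments $>1$), and you must verify that the aggregated constant, including the $\sum_{p\ge w}\frac{2}{p(p-2)}$-type corrections and the loss in the exponentiation step, indeed stays below $6$ when $w$ is just above $3$.
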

\begin{proof}
The evaluation of $\bm{\omega}_1(m, p)$ in Lemma \ref{Omega for p does not divide m:2} and Lemma \ref{Omega for p divide m:2} implies that 
\begin{equation}\label{omega1 bound:2}
\bm{\omega}_1(m, p) \leq \frac{p}{p-1}.
\end{equation}
We can therefore bound
\begin{align}\label{Eulerprodomegabound:2}
\prod_{w\leq p < z_0}\left(1- \frac{\bm{\omega}_1(m, p)}{p}\right)^{-1} \leq \prod_{w\leq p < z_0}\left(1- \frac{1}{p-1}\right)^{-1} \leq \prod_{w\leq p < z_0} \frac{p}{p-1} \prod_{w\leq p < z_0} \left(1+ \frac{3}{p^2}\right)^{-1},
\end{align}
where in the last inequality we used the bound $\frac{1}{p(p-2)}\leq \frac{3}{p^2}$ for $p\geq 3$. We next bound both the products of the above equation separately. It follows from \cite[(3.30) and (3.26)]{Rosser} that
\begin{equation}\label{ApplyRosserSchoenfield:2}
\prod_{w\leq p < z_0} \frac{p}{p-1} = \left(\prod_{p < z_0} \frac{p}{p-1} \right) \left( \prod_{p < w} \frac{p-1}{p}\right)\leq \left(\frac{\log z_0}{\log w} \right) \left(1+\frac{1}{\log^2 z_0}\right) \left(1+\frac{1}{2\log^2 w}\right)
\end{equation}
Let $\omega(n)$ denote the number of distinct prime divisors of $n$. We have
\[
\prod_{w\leq p < z_0} \left(1+ \frac{3}{p^2}\right)^{-1} \leq \prod_{p \geq w} \left(1+ \frac{3}{p^2}\right)^{-1} = 1+ \sum_{\substack{n>1\\ p\mid n \implies p\geq w}} \frac{\mu^2(n)3^{\omega(n)}}{n^2}
\]
Applying the bound $3^{\omega(n)} \leq 1.614 n^{1/2}$ (cf. \cite[Lemma 2.5]{BK}) and bounding the sum against the Riemann integral, the above bound can be reduced as
\begin{equation}\label{Prod3omegabound:2}
\prod_{w\leq p < z_0} \left(1+ \frac{3}{p^2}\right)^{-1} \leq 1+ 1.614 \sum_{n\geq w} \frac{1}{n^{3/2}} \leq 1+ \frac{3.228}{\sqrt{w}}
\end{equation}
We next insert the bounds \eqref{ApplyRosserSchoenfield:2} and \eqref{Prod3omegabound:2} of both the products into \eqref{Eulerprodomegabound:2} to obtain
\begin{equation}\label{Astepbeforelemma}
\prod_{w\leq p < z_0}\left(1- \frac{\bm{\omega}_1(m, p)}{p}\right)^{-1} \leq \left(\frac{\log z_0}{\log w} \right) \left(1+\frac{1}{\log^2 z_0}\right) \left(1+\frac{1}{2\log^2 w}\right) \left(1+ \frac{3.228}{\sqrt{w}}\right)  
\end{equation}
Finally, we utilize the bound 
\[
\left(1+\frac{1}{\log^2 z_0}\right) \left(1+\frac{1}{2\log^2 w}\right) \left(1+ \frac{3.228}{\sqrt{w}}\right) \leq \left(1 +\frac{6}{\log w}\right)
\]
in \eqref{Astepbeforelemma}, to conclude our lemma.
\end{proof}
For $\beta, D>0$, we define two sequences $\{\lambda_{d}^{\pm}\}$ in a following way.
\begin{itemize}
\item[(i)]
$\lambda_1^{\pm} = 1$.
\item[(ii)]
$\lambda_{d}^{\pm} = 0$ if $d$ is not square-free.
\item[(iii)]
For $d= p_1 p_2 \cdots p_r$ with $p_1>p_2> \cdots >p_r$ 
\begin{equation*}
\lambda_{d}^{+} = \begin{cases}
(-1)^r & \text{ if } p_1 \cdots p_{2l} \ p_{2l+1}^{\beta +1} <D \text{ whenever } 0 \leq l \leq \frac{r-1}{2}\\
0 & \text{ Otherwise }
\end{cases} 
\end{equation*}
and
\begin{equation*}
\lambda_{d}^{-} = \begin{cases}
(-1)^r & \text{ if } p_1 \cdots p_{2l-1} \ p_{2l}^{\beta +1} <D \text{ whenever } 0 \leq l \leq \frac{r}{2}\\
0 & \text{ Otherwise }.
\end{cases} 
\end{equation*}
\end{itemize}
We define 
\begin{equation*}
V_j(z_0) :=\prod_{p\mid P_j(z_0)}\left(1 - \frac{\bm{\omega}_1(m, p)}{p}\right)
\end{equation*}
for $j= 3, 4$. As is standard, we consider $D$ and $\beta$ to be fixed throughout. For $\beta>1$, we define 
\begin{equation}\label{abetarbeta}
a_\beta := e\frac{\beta}{\beta-1}\log \left(\frac{\beta}{\beta-1} \right), \hspace{.4cm} r_\beta := \frac{\log \left(1+\frac{6}{\log 7} \right)}{\log \left(\frac{\beta}{\beta-1} \right)} 
\end{equation}
and 
\begin{equation}\label{Cbetas}
\mathfrak{C}_{\beta}(s):=e^{r_{\beta}-1}\left(1+\frac{6}{\log 7}\right)\frac{a_{\beta}^{\left\lfloor s-\beta\right\rfloor +1}}{1-a_{\beta}}.
\end{equation}

\begin{lemma}\label{lem:lambdadsums}
Let $D>0$ and $\beta \geq 5$ be given and set $s := \frac{\log D}{\log z_0}$. Then for $s \geq \beta$ and $z_0 \geq 7$, we have
\begin{equation*}
\sum_{d \mid P_j(z_0)} \lambda_d^{-} \frac{\bm{\omega}_1(m, d)}{d} > V_j(z_0) (1- \mathfrak{C}_{\beta}(s)) \quad \text{ and } \quad \sum_{d \mid P_j(z_0)} \lambda_d^{+} \frac{\bm{\omega}_1(m, d)}{d} < V_j(z_0) (1+ \mathfrak{C}_{\beta}(s))
\end{equation*}
\end{lemma}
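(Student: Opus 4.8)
## Proof Proposal for Lemma \ref{lem:lambdadsums}

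The plan is to carry out the standard Rosser–Iwaniec $\beta$-sieve buchstab-type analysis, but in the quantitative, fully explicit form needed for Theorem \ref{Thm 2}, with the multiplicative density $\bm{\omega}_1(m,\cdot)$ playing the role of the sieve density. The key structural input is already recorded: Lemma \ref{Boundinlog} gives the dimension-one bound
\[
\prod_{w\leq p<z_0}\left(1-\frac{\bm{\omega}_1(m,p)}{p}\right)^{-1}\leq\left(\frac{\log z_0}{\log w}\right)\left(1+\frac{6}{\log w}\right),
\]
so the sieve has level $1$ and the ``$L$-constant'' can be taken so that $1+L/\log w\leq 1+6/\log w$, valid for $w\geq 7$. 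This is exactly why the definitions \eqref{abetarbeta} and \eqref{Cbetas} feature $6/\log 7$.

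\textbf{Step 1: Reduce to one index $j$ and set up the combinatorial identity.} Fix $j\in\{3,4\}$; write $P=P_j(z_0)$, $V=V_j(z_0)$, and let $g$ denote the multiplicative function $g(d)=\bm{\omega}_1(m,d)/d$, which is supported on squarefree $d\mid P$ and satisfies $0\leq g(p)<1$ for $p\geq 7$ by \eqref{omega1 bound:2}. The weights $\lambda_d^{\pm}$ are precisely the Rosser weights of parameter $\beta$ truncated at level $D$, so by the classical Rosser identity (the same one underlying \cite[Lemma 3]{Iwaniec2}, now with the exponent $3$ in the well-factorable condition replaced by $\beta+1$), one has the exact expansion
\[
\sum_{d\mid P}\lambda_d^{\pm}g(d)=V\mp\sum_{\substack{d=p_1\cdots p_r\mid P\\ \text{boundary terms}}}(\cdots),
\]
where the ``boundary'' terms are those squarefree $d=p_1\cdots p_r$, $p_1>\cdots>p_r$, for which the defining inequality in (iii) \emph{fails at the last possible place}, i.e. $p_1\cdots p_{2l}p_{2l+1}^{\beta+1}\geq D$ (for $+$) or $p_1\cdots p_{2l-1}p_{2l}^{\beta+1}\geq D$ (for $-$) while all earlier inequalities hold. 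I would first reproduce this identity, which is purely formal and independent of the sieve dimension.

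\textbf{Step 2: Bound the boundary sum.} On each boundary term one has $p_r^{\beta+1}\cdot(\text{product of larger primes})\geq D$ with all primes $\geq 7$, hence $p_r\geq D^{1/(r+\beta)}$ is forced up to lower-order factors; more usefully, the smallest prime $p_r$ satisfies $z_0^{?}$-type constraints that, after summing geometric-series-style over the chain $p_1>\cdots>p_r$ using the dimension-one estimate from Lemma \ref{Boundinlog} at each level, produce a bound of the shape
\[
V\cdot e^{r_\beta-1}\left(1+\frac{6}{\log 7}\right)\sum_{k\geq 0}a_\beta^{\,k}\cdot a_\beta^{\lfloor s-\beta\rfloor},
\]
where each factor $a_\beta=e\frac{\beta}{\beta-1}\log\frac{\beta}{\beta-1}$ is the per-level loss from replacing one prime variable of size up to $z_0$ by one raised to the power $\beta+1$ (this is where $\beta\geq 5$, equivalently $a_\beta<1$, is used: $a_\beta<1$ iff $\beta$ is large enough, and $\beta=5$ suffices). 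Summing the geometric series $\sum_{k\geq 0}a_\beta^k=1/(1-a_\beta)$ gives exactly $\mathfrak{C}_\beta(s)$ from \eqref{Cbetas}. The condition $s\geq\beta$ guarantees $\lfloor s-\beta\rfloor\geq 0$ so the chain has positive length and the induction terminates; $z_0\geq 7$ guarantees every prime encountered satisfies the hypothesis of Lemma \ref{Boundinlog}.

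\textbf{Step 3: Conclude.} Combining Steps 1 and 2, the boundary sum is at most $V\,\mathfrak{C}_\beta(s)$ in absolute value, and since $g(d)\geq 0$ and $\lambda^-\leq\widetilde\mu\leq\lambda^+$ pointwise (the fundamental property of Rosser weights), the two displayed inequalities of the lemma follow immediately. The main obstacle is Step 2: tracking the constants through the Buchstab chain so that the per-level loss is exactly $a_\beta$ and the ``overhead'' constant is exactly $e^{r_\beta-1}(1+6/\log 7)$, rather than some larger admissible constant. This is essentially a careful, bookkeeping-heavy adaptation of the explicit $\beta$-sieve estimates (as in the treatment underlying \cite{BK}) to the density $\bm{\omega}_1$; the only genuinely new verification is that Lemma \ref{Boundinlog} provides the dimension-one hypothesis with the specific constant $6/\log 7$, which is what forces the appearance of $1+6/\log 7$ in $r_\beta$ and $\mathfrak{C}_\beta$.
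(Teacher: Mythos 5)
Your proposal follows essentially the same route as the paper: decompose $\sum_d \lambda_d^{\pm}\bm{\omega}_1(m,d)/d$ into $V_j(z_0)$ plus the Rosser--Iwaniec boundary terms $V_{j,n}(z_0)$ (as in \cite[(6.29)--(6.30), p.~157]{IwaniecKowalski}), bound each $V_{j,n}$ using the one-dimensional estimate of Lemma \ref{Boundinlog} together with Stirling to get $V_j(z_0)a_\beta^n e^{r_\beta-1}(1+6/\log 7)$, note the terms vanish for $n\leq s-\beta$, and sum the geometric series (using $a_\beta<1$ for $\beta\geq 5$) to obtain $\mathfrak{C}_\beta(s)$. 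This matches the paper's proof, with your Step 2 being a compressed version of its explicit bound $V_{j,n}(z_0)\leq \frac{V_j(z_n)}{n!}\bigl(\log(V_j(z_n)/V_j(z_0))\bigr)^n$ with $z_n=z_0^{((\beta-1)/\beta)^n}$.
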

\begin{proof}
For simplicity, we first denote
\[
V_j^-(z_0) := \sum_{d\mid P_j(z_0)} \lambda_{d}^- \frac{\bm{\omega}_1(m, d)}{d}
\hspace{.4cm} \text{and} \hspace{.4cm}
V_j^+(z_0) := \sum_{d\mid P_j(z_0)} \lambda_{d}^+ \frac{\bm{\omega}_1(m, d)}{d}.
\]
Letting 
\[
y_m := \left(\frac{D}{p_1p_2 \cdots p_m} \right)^{1/\beta},
\]
we define 
\begin{equation}\label{DefVjnz0}
V_{j,n}(z_0):=\sum_{\substack{y_n<p_n<\dots p_1<z_0\\ p_m<y_m,\ m<n,\ m\equiv n\Mod 2)}} \frac{\bm{\omega}_1\left(m, p_1p_2\cdots p_n\right)}{p_1p_2\cdots p_n} V_j\left(p_n\right).
\end{equation}
It follows by inclusion-exclusion as in \cite[(6.29) and (6.30)]{IwaniecKowalski} that
\begin{align}\label{Vj-andVj+}
V_j^-(z_0) = V_j(z_0) - \sum_{n \, \text{even}} V_{j,n}(z_0)
\hspace{.4cm}\text{and}\hspace{.4cm}
V_j^+(z_0) = V_j(z_0) + \sum_{n \, \text{odd}} V_{j,n}(z_0).
\end{align}
For $z_n := z_0^{\left(\frac{\beta-1}{\beta} \right)^n}$, we can bound $V_{j,n}(z_0)$ as (cf. \cite[p.~157]{IwaniecKowalski})
\begin{equation}\label{Vjnz0}
V_{j,n}(z_0) \leq \frac{V_j(z_n)}{n!}\left(\log \left(\frac{V_j(z_n)}{V_j(z_0)}\right) \right)^n,
\end{equation}
where $V_j(z_n)$ can be expressed as
\[
V_j(z_n) = \prod_{p\mid P_j(z_n)}\left(1 - \frac{\bm{\omega}_1(m, p)}{p}\right) = V_j(z_0) \prod_{z_n\leq p <z}\left(1 - \frac{\bm{\omega}_1(m, p)}{p}\right)^{-1}.
\] 
Applying Lemma \ref{Boundinlog} into the above equation, we obtain
\begin{equation}\label{Vjzn}
V_j(z_n) \leq V_j(z_0) \left(\frac{\log z_0}{\log z_n} \right) \left( 1+ \frac{6}{\log z_n} \right).
\end{equation}
Note that $z_n \geq 7$. Thus by inserting \eqref{Vjzn} into \eqref{Vjnz0}, $V_{j,n}(z_0)$ can be bounded as
\[
V_{j,n}(z_0) \leq \frac{V_j(z_0)}{n!}  \left(\frac{\log z_0}{\log z_n} \right) \left( 1+ \frac{6}{\log 7} \right) \left[\log\left\{\frac{\log z_0}{\log z_n} \left( 1+ \frac{6}{\log 7} \right)\right\}\right]^n.
\]
It follows from well-known Stirling's bound (a more precise version by Robbins \cite{Robbins}) that
\[
n! \geq \sqrt{2\pi n}\left(\frac{n}{e} \right)^n \geq e \left(\frac{n}{e} \right)^n.
\]
Therefore by utilizing $z_n = z_0^{\left(\frac{\beta-1}{\beta} \right)^n}$, we have
\begin{align*}
V_{j,n}(z_0) &\leq \frac{V_j(z_0)}{e n^n} \left(e\frac{\beta}{\beta-1} \right)^n  \left( 1+ \frac{6}{\log 7} \right) \left[\log\left\{\left(\frac{\beta}{\beta-1} \right)^n  \left( 1+ \frac{6}{\log 7} \right)\right\}\right]^n \\
&= \frac{V_j(z_0)}{e} \left(e\frac{\beta}{\beta-1} \log \left(\frac{\beta}{\beta-1} \right) \right)^n  \left( 1+ \frac{6}{\log 7} \right) \left(1 + \frac{\log \left( 1+ \frac{6}{\log 7} \right)}{n\log \left(\frac{\beta}{\beta-1} \right)}\right)^n \\
&= \frac{V_j(z_0)}{e} a_\beta^n \left( 1+ \frac{6}{\log 7} \right) \left(1+ \frac{r_\beta}{n}  \right)^n \\
&\leq V_j(z_0) a_\beta^n e^{r_\beta -1} \left( 1+ \frac{6}{\log 7} \right), 
\end{align*}
where in the penultimate step we used \eqref{abetarbeta} and in the last step we have applied the bound $\left(1+ \frac{r_\beta}{n}  \right)^n \leq e^{r_\beta}$. We next take the sum over all $n\in \N$ on both the sides of the above equation. The definition \eqref{DefVjnz0} yields that $V_{j,n}(z_0) = 0$ for $n \leq s-\beta$. Therefore for $\beta\geq 5$, we have
\[
\sum_{n\geq 1} V_{j,n}(z_0) \leq V_j(z_0) e^{r_\beta -1} \left( 1+ \frac{6}{\log 7} \right) \sum_{n> s-\beta} a_\beta^n \leq V_j(z_0) \mathfrak{C}_{\beta}(s),
\]
where in the last step we have applied \eqref{Cbetas}. Finally, we insert the above bound into \eqref{Vj-andVj+} to conclude
\[
V_j^-(z_0) > V_j(z_0) (1- \mathfrak{C}_{\beta}(s)) \quad \text{ and } \quad V_j^+(z_0)< V_j(z_0) (1+ \mathfrak{C}_{\beta}(s)).
\]
This completes the proof of the lemma.
\end{proof}
We next bound the sums of the type in Lemma \ref{lem:lambdadsums} under the additional restriction that we only sum over those $d$ with $\delta\mid d$, for some $\delta\in\N$.
\begin{lemma}\label{Lem:Bound for S(A, P, z)}
Let $D>0$ and $\beta\geq 5$ be given and set $s:=\frac{\log(D)}{\log(z)}$.  Then for $s\geq \beta$, $z_0 \geq 7$ and square-free $\delta\in\N$, we have
\begin{equation*}
\sum_{\substack{d \mid P_j(z_0)\\ \delta \mid d}} \lambda_d^{-} \, \frac{\bm{\omega}_1(m, d)}{d} \geq \mu(\delta) \left( \prod_{p \mid \delta} \frac{\bm{\omega}_1(m,p)}{p- \bm{\omega}_1(m,p)}\right) V_j(z_0) (1- \mathfrak{C}_{\beta}(s))
\end{equation*}
and
\begin{equation*}
\sum_{\substack{d \mid P_j(z_0)\\ \delta \mid d}} \lambda_d^{+} \, \frac{\bm{\omega}_1(m, d)}{d} \leq \mu(\delta) \left( \prod_{p \mid \delta} \frac{\bm{\omega}_1(m,p)}{p- \bm{\omega}_1(m,p)}\right) V_j(z_0) (1+ \mathfrak{C}_{\beta}(s)).
\end{equation*}
\end{lemma}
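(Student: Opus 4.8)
The plan is to reduce the $\delta\mid d$ restriction to the unrestricted case of Lemma~\ref{lem:lambdadsums} by factoring out the prime divisors of $\delta$. For square-free $\delta$ with $\delta\mid P_j(z_0)$, every $d$ with $\delta\mid d$ and $d\mid P_j(z_0)$ can be written uniquely as $d=\delta e$ with $e\mid P_j(z_0)/\delta$, and because $\bm{\omega}_1(m,\cdot)$ is multiplicative we have $\bm{\omega}_1(m,d)/d=\bigl(\bm{\omega}_1(m,\delta)/\delta\bigr)\bigl(\bm{\omega}_1(m,e)/e\bigr)$. The Rosser weights $\lambda_d^{\pm}$, however, do not factor so simply, since the defining inequalities on $p_1\cdots p_{2l}p_{2l+1}^{\beta+1}$ couple the primes of $\delta$ with those of $e$; so the first step is to identify the right way to re-index. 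Writing the primes of $d$ in decreasing order and noting that the primes of $\delta$ are interleaved among them, I would sort $d=\delta e$ and track, for each choice of $e$, the sign $(-1)^{\omega(d)}=\mu(\delta)(-1)^{\omega(e)}$ and the truncation conditions; this is precisely the combinatorial bookkeeping that appears in the standard treatment of the linear sieve with a fixed divisor constrained, and it shows that $\sum_{\delta\mid d\mid P_j(z_0)}\lambda_d^{\pm}\frac{\bm{\omega}_1(m,d)}{d}$ equals $\mu(\delta)\frac{\bm{\omega}_1(m,\delta)}{\delta}$ times a sum of the same shape as in Lemma~\ref{lem:lambdadsums} but taken over $d\mid P_j(z_0)$ with $p\mid d\Rightarrow p<$ (smallest prime of $\delta$), and with the $D$ in the truncation replaced by $D/\delta$.

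Next I would apply Lemma~\ref{lem:lambdadsums} (and its proof, which works verbatim for any sub-product of $P_j(z_0)$) to that inner sum, obtaining that it lies between $V_j^{(\delta)}(z_0)\bigl(1-\mathfrak{C}_\beta(s')\bigr)$ and $V_j^{(\delta)}(z_0)\bigl(1+\mathfrak{C}_\beta(s')\bigr)$, where $V_j^{(\delta)}(z_0)=\prod_{p\mid P_j(z_0),\,p\nmid\delta}\bigl(1-\bm{\omega}_1(m,p)/p\bigr)$ and $s'=\log(D/\delta)/\log z_0\ge s-\text{(something)}$. Here I would use that $\mathfrak{C}_\beta$ is monotone and that removing the finitely many primes of $\delta$ only improves the error term, so $\mathfrak{C}_\beta(s')\le\mathfrak{C}_\beta(s-O(1))$; but in fact the cleanest route is to absorb the $\delta$-primes into the main factor and keep exactly $\mathfrak{C}_\beta(s)$, which is legitimate because the inclusion–exclusion identity \eqref{Vj-andVj+} and the bound \eqref{Vjnz0} are insensitive to whether a bounded set of small primes is present. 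Finally, the identity
\[
\frac{\bm{\omega}_1(m,\delta)}{\delta}\,V_j^{(\delta)}(z_0)=\Bigl(\prod_{p\mid\delta}\frac{\bm{\omega}_1(m,p)}{p-\bm{\omega}_1(m,p)}\Bigr)V_j(z_0),
\]
which follows by writing $\frac{\bm{\omega}_1(m,p)/p}{1-\bm{\omega}_1(m,p)/p}=\frac{\bm{\omega}_1(m,p)}{p-\bm{\omega}_1(m,p)}$ for each $p\mid\delta$, yields the stated inequalities, with the sign $\mu(\delta)$ flipping the direction of the inequality exactly when $\delta$ has an odd number of prime factors.

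The main obstacle is the re-indexing in the first step: verifying that after fixing the divisor $\delta$, the Rosser truncation conditions on $d=\delta e$ decouple into ``$\mu(\delta)$ times a Rosser-type sum over $e$ with parameter $D/\delta$'' is not purely formal, because the monotone ordering of primes intertwines the two factors. One must check that imposing $\delta\mid d$ and then discarding, at each even (resp. odd) level $2l$, the conditions that only involve primes of $\delta$ leaves precisely the Rosser condition for $e$ at the shifted truncation level — this is the content of the well-logged fact that Rosser's weights are ``well-factorable'' with respect to a fixed divisor (cf. the treatment in \cite[Ch.~6]{IwaniecKowalski}), and I would cite that rather than redo it. Everything downstream — applying Lemma~\ref{lem:lambdadsums}, the Euler-product manipulation, and tracking $\mu(\delta)$ — is routine.
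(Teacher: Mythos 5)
Your reduction hinges on the claim that the restricted sum factors as $\mu(\delta)\,\frac{\bm{\omega}_1(m,\delta)}{\delta}$ times ``a Rosser-type sum over $e$ with parameter $D/\delta$, supported on primes below the smallest prime of $\delta$,'' and this is exactly the step you do not prove --- and as stated it is not correct. Writing $d=\delta e$, the complementary factor $e$ runs over \emph{all} divisors of $P_j(z_0)/\delta$, so its primes may well exceed those of $\delta$; and the Rosser conditions $p_1\cdots p_{2l}p_{2l+1}^{\beta+1}<D$ are imposed on the interleaved ordering of the primes of $\delta e$, so fixing $\delta$ does not leave a Rosser condition on $e$ alone with a shifted truncation level. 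The fact you invoke to paper over this (``well-factorability'' of Rosser's weights, cf.\ Iwaniec) is a different phenomenon: it concerns decomposing the remainder-term weights into sums of products $\alpha_{d_1}\beta_{d_2}$ supported on boxes, not the behaviour of $\lambda_d^{\pm}$ under a fixed-divisor constraint $\delta\mid d$; no statement of the form you need is available to cite in \cite[Ch.~6]{IwaniecKowalski}. Even granting some decoupling with parameter $D/\delta$, you would obtain the error constant $\mathfrak{C}_{\beta}(s')$ with $s'=s-\tfrac{\log\delta}{\log z_0}<s$, and since $a_\beta<1$ makes $\mathfrak{C}_{\beta}$ \emph{decreasing} in $s$, this is worse than $\mathfrak{C}_{\beta}(s)$; your assertion that one may ``keep exactly $\mathfrak{C}_\beta(s)$'' is unsupported. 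Finally, your closing remark that $\mu(\delta)$ ``flips the direction of the inequality'' when $\omega(\delta)$ is odd means your scheme would deliver a parity-dependent pair of bounds, not the inequalities as stated in the lemma, so the endgame also does not match the claim.

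The paper's proof avoids this combinatorics entirely and never touches the Rosser weights. It writes the indicator of $\delta\mid d$ (for squarefree $\delta$) by inclusion--exclusion as $\sum_{u\mid\delta}\mu(u)\widetilde{f}_u(d)$, where $\widetilde{f}_u$ is the indicator of $\gcd(d,u)=1$, absorbs $\widetilde{f}_u$ into a modified multiplicative density $\bm{\omega}_u(d):=\widetilde{f}_u(d)\,\bm{\omega}_1(m,d)$, and applies Lemma \ref{lem:lambdadsums} to each inner sum with the \emph{same} $D$, $s$ and hence the same $\mathfrak{C}_{\beta}(s)$ (the proof of that lemma only uses the upper bound of Lemma \ref{Boundinlog}, which persists since $\bm{\omega}_u\le\bm{\omega}_1$). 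The factor $\mu(\delta)\prod_{p\mid\delta}\frac{\bm{\omega}_1(m,p)}{p-\bm{\omega}_1(m,p)}$ then falls out of the Euler-product evaluation of $\sum_{u\mid\delta}\mu(u)V_{j,u}(z_0)$, where $V_{j,u}(z_0)=\prod_{p\mid P_j(z_0)}\bigl(1-\bm{\omega}_u(p)/p\bigr)$. If you want to salvage your write-up, replace the $d=\delta e$ re-indexing by this M\"obius/coprimality device; your final Euler-product identity is the one correct and compatible piece of the argument.
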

\begin{proof}
We first define two characteristic functions
\[
f_{\delta}(n):=\begin{cases} 1&\text{if }\delta\mid n,\\ 0&\text{otherwise}\end{cases}
\hspace{.4cm} \text{and} \hspace{.4cm}
\widetilde{f}_{\delta}(n):=\begin{cases} 1&\text{if }\gcd(n,\delta)=1,\\ 0&\text{otherwise}\end{cases}.
\]
Thus for a prime $p$, we have $f_{p}(n)=1-\widetilde{f}_{p}(n)$. For $\delta$ being square-free,  
\[
f_{\delta}(n)=\prod_{p\mid \delta}f_{p}(n)= \prod_{p\mid \delta}\left(1-\widetilde{f}_{p}(n)\right)=\sum_{u\mid \delta} \mu(u)\widetilde{f}_u(n). 
\]
Therefore by denoting $\bm{\omega}_u(d) := \widetilde{f}_u(d) \bm{\omega}_1(m, d)$, we can write from the above equation that
\begin{multline}\label{Simplifyrestrictedsum}
\sum_{\substack{d \mid P_j(z_0)\\ \delta \mid d}} \lambda_d^{\pm} \, \frac{\bm{\omega}_1(m, d)}{d} = \sum_{d \mid P_j(z_0)} f_{\delta}(d) \lambda_d^{\pm} \, \frac{\bm{\omega}_1(m, d)}{d} = \sum_{u\mid \delta} \mu(u)\sum_{d \mid P_j(z_0)} \widetilde{f}_u(d) \lambda_d^{\pm} \, \frac{\bm{\omega}_1(m, d)}{d}\\
= \sum_{u\mid \delta} \mu(u)\sum_{d \mid P_j(z_0)} \lambda_d^{\pm} \, \frac{\bm{\omega}_u(d)}{d}.
\end{multline}
Letting $V_{j,u}(z_0) :=\prod\limits_{p\mid P_j(z_0)}\left(1 - \frac{\bm{\omega}_u(p)}{p}\right)$, it follows from Lemma \ref{lem:lambdadsums} that
\begin{align*}
\sum_{d \mid P_j(z_0)} \lambda_d^{-} \frac{\bm{\omega}_u(d)}{d} > V_{j,u}(z_0) (1- \mathfrak{C}_{\beta}(s)) \quad \text{ and } \quad \sum_{d \mid P_j(z_0)} \lambda_d^{+} \frac{\bm{\omega}_u(d)}{d} < V_{j,u}(z_0) (1+ \mathfrak{C}_{\beta}(s)).
\end{align*}
Thus \eqref{Simplifyrestrictedsum} can be derived as
\begin{align}\label{applyprevlemma1}
\sum_{\substack{d \mid P_j(z_0)\\ \delta \mid d}} \lambda_d^{-} \, \frac{\bm{\omega}_1(m, d)}{d} > \sum_{u\mid \delta} \mu(u)V_{j,u}(z_0) (1- \mathfrak{C}_{\beta}(s))
\end{align}
and 
\begin{align}\label{applyprevlemma2}
\sum_{\substack{d \mid P_j(z_0)\\ \delta \mid d}} \lambda_d^{+} \, \frac{\bm{\omega}_1(m, d)}{d} < \sum_{u\mid \delta} \mu(u)V_{j,u}(z_0) (1+ \mathfrak{C}_{\beta}(s))
\end{align}
The sum on the right hand side of both \eqref{applyprevlemma1} and \eqref{applyprevlemma2} evaluates as
\begin{align}\label{muuvjuz0}
\sum_{u\mid \delta} \mu(u) V_{j,u}(z_0) &= \sum_{u\mid \delta} \mu(u) \prod\limits_{p\mid P_j(z_0)}\left(1 - \frac{\widetilde{f}_u(p) \bm{\omega}_1(m, p)}{p}\right) = V_j(z_0)  \sum_{u\mid \delta} \frac{\mu(u)}{\prod\limits_{p\mid u}\left(1 - \frac{\bm{\omega}_1(m, p)}{p}\right)} \nonumber\\
&= V_j(z_0) \prod_{p\mid \delta}\left( 1 - \frac{p}{p - \bm{\omega}_1(m, p)}\right) = V_j(z_0) \prod_{p\mid \delta}\left(- \frac{\bm{\omega}_1(m, p)}{p - \bm{\omega}_1(m, p)}\right).
\end{align}
Therefore by inserting \eqref{muuvjuz0} into \eqref{applyprevlemma1} and \eqref{applyprevlemma2}, we can conclude our lemma.
\end{proof}

We next define two functions
\begin{equation}\label{Sigma-}
\Sigma^{-}(D, z_0) := \sum_{d_3\mid P_3(z_0)} \sum_{d_4\mid P_4(z_0)} \lambda_{d_3}^{-}\lambda_{d_4}^{-} \frac{\bm{\omega}(m, \bm{\mathrm{d}})}{d_3 d_4}
\end{equation}
and
\begin{equation}\label{Sigma+}
\Sigma^{+}(D, z_0) := \sum_{d_3\mid P_3(z_0)} \sum_{d_4\mid P_4(z_0)} \lambda_{d_3}^{+}\lambda_{d_4}^{+} \frac{\bm{\omega}(m, \bm{\mathrm{d}})}{d_3 d_4}.
\end{equation}
In the following lemma we provide an upper and lower bound of $S(\mathcal{A}, \mathscr{P}_7, z_0)$.
\begin{lemma}\label{Lem:Bound of Sapz 2}
For $D>0$, $\beta \geq 6$, we have
\begin{equation*}
\Sigma^{-}(D, z_0) r_4(m) - \sum_{\substack{d_3 \mid P_3(z_0)\\d_3 < \frac{D}{5^{\beta}}}}\sum_{\substack{d_4 \mid P_4(z_0)\\d_4 < \frac{D}{3^{\beta}}}} |\bm{R}(m, \bm{\mathrm{d}})| \leq S(\mathcal{A}, \mathscr{P}_7, z_0) \leq \Sigma^{+}(D, z_0) r_4(m) + \sum_{\substack{d_3 \mid P_3(z_0)\\d_3 < \frac{D}{5^{\beta}}}}\sum_{\substack{d_4 \mid P_4(z_0)\\d_4 < \frac{D}{3^{\beta}}}} |\bm{R}(m, \bm{\mathrm{d}})|
\end{equation*}
\end{lemma}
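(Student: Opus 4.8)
The plan is to run the standard two–dimensional (vector) sieve: replace the two coprimality conditions defining $S(\mathcal{A},\mathscr{P}_7,z_0)$ by the Rosser weights $\lambda_d^{\pm}$, interchange the order of summation so as to recover the counting function $|\mathcal{A}_{\bm{\mathrm{d}}}|$, insert the asymptotic formula of Proposition \ref{Proposition 1:2}, and finally estimate the resulting remainder using the size and the support of the weights. Throughout, for $j=3,4$ write $\mathbbm{1}_j := \mathbbm{1}_{(x_j,P_j(z_0))=1}$ and $\Lambda_j^{\pm}(x_j) := \sum_{d_j\mid (x_j,P_j(z_0))}\lambda_{d_j}^{\pm}$, so that the construction of $\{\lambda_d^{\pm}\}$ gives $\Lambda_j^-(x_j)\le \mathbbm{1}_j\le \Lambda_j^+(x_j)$ with $\Lambda_j^+(x_j)\ge 0$, and $S(\mathcal{A},\mathscr{P}_7,z_0)=\sum_{(x_3,x_4)\in\mathcal{A}}\mathbbm{1}_3\mathbbm{1}_4$.

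For the upper bound I would simply use $0\le \mathbbm{1}_3\mathbbm{1}_4\le \Lambda_3^+(x_3)\Lambda_4^+(x_4)$, while for the lower bound I would invoke the Br\"udern--Fouvry vector-sieve inequality, which bounds $\mathbbm{1}_3\mathbbm{1}_4$ below by a bilinear expression in the $\Lambda_j^{\pm}$. Summing over $(x_3,x_4)\in\mathcal{A}$ and swapping the order of summation converts each of these into sums of the shape $\sum_{d_3\mid P_3(z_0)}\sum_{d_4\mid P_4(z_0)}\lambda_{d_3}^{\pm}\lambda_{d_4}^{\pm}\,\#\{(x_3,x_4)\in\mathcal{A}:d_3\mid x_3,\ d_4\mid x_4\}=\sum_{d_3\mid P_3(z_0)}\sum_{d_4\mid P_4(z_0)}\lambda_{d_3}^{\pm}\lambda_{d_4}^{\pm}\,|\mathcal{A}_{\bm{\mathrm{d}}}|$. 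Because $d_3\mid P_3(z_0)=5P(z_0)$ forces $2,3\nmid d_3$ and $d_4\mid P_4(z_0)=3P(z_0)$ forces $2,5\nmid d_4$, Proposition \ref{Proposition 1:2} applies and gives $|\mathcal{A}_{\bm{\mathrm{d}}}|=\frac{\bm{\omega}(m,\bm{\mathrm{d}})}{d_3d_4}r_4(m)+\bm{R}(m,\bm{\mathrm{d}})$. Substituting, the main term is exactly $\Sigma^{\pm}(D,z_0)\,r_4(m)$ by the definitions \eqref{Sigma-} and \eqref{Sigma+}, and the leftover is $\sum_{d_3\mid P_3(z_0)}\sum_{d_4\mid P_4(z_0)}\lambda_{d_3}^{\pm}\lambda_{d_4}^{\pm}\bm{R}(m,\bm{\mathrm{d}})$.

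It then remains to bound this remainder. Here I would use $|\lambda_d^{\pm}|\le 1$ together with the support of the Rosser weights: if $\lambda_{d_3}^{\pm}\neq 0$ then $d_3\mid P_3(z_0)$, and since the smallest prime dividing such a $d_3$ is at least $5$, the defining inequalities of $\{\lambda^{\pm}\}$ (with parameter $\beta\ge 6$) force $d_3<D/5^{\beta}$; similarly $\lambda_{d_4}^{\pm}\neq 0$ forces $d_4<D/3^{\beta}$, the smallest prime dividing $d_4\mid 3P(z_0)$ being at least $3$. Consequently the remainder is bounded in absolute value by $\sum_{\substack{d_3\mid P_3(z_0)\\ d_3<D/5^{\beta}}}\sum_{\substack{d_4\mid P_4(z_0)\\ d_4<D/3^{\beta}}}|\bm{R}(m,\bm{\mathrm{d}})|$, which is precisely the error term in the lemma; combining this with the main-term identity yields both inequalities.

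The main obstacle I anticipate is the choice of vector-sieve inequality for the lower bound: one must use a form that is valid pointwise for $0$--$1$ valued indicators and that, after the interchange of summation, collapses to the single bilinear sum $\Sigma^{-}(D,z_0)$ rather than to a three-term Br\"udern--Fouvry combination, which requires care with the sign/positivity properties of the two-dimensional Rosser weights. The remaining ingredients---the interchange of summation, the verification that $\omega(m,\bm{\mathrm{d}})$ does not factor but is nonetheless handled termwise by Proposition \ref{Proposition 1:2}, and the support bookkeeping (where $\beta\ge 6$ and the smallest prime factors $5$ of $P_3(z_0)$ and $3$ of $P_4(z_0)$ are genuinely used)---are routine.
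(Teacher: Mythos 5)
Your upper bound and your handling of the remainder (interchange of summation, insertion of Proposition \ref{Proposition 1:2}, the bound $|\lambda_d^{\pm}|\leq 1$, and the support restrictions $d_3<D/5^{\beta}$, $d_4<D/3^{\beta}$ coming from the smallest prime factors of $P_3(z_0)$ and $P_4(z_0)$) are exactly the paper's steps. The genuine gap is the lower bound, and it is precisely the point you flag and then leave unresolved. The Br\"udern--Fouvry inequality $\mathbbm{1}_3\mathbbm{1}_4\geq \Lambda_3^-\Lambda_4^++\Lambda_3^+\Lambda_4^--\Lambda_3^+\Lambda_4^+$, after swapping summation and applying Proposition \ref{Proposition 1:2}, produces a main term which is $r_4(m)$ times a combination of three mixed bilinear sums (with weights $\lambda_{d_3}^-\lambda_{d_4}^+$, $\lambda_{d_3}^+\lambda_{d_4}^-$ and $\lambda_{d_3}^+\lambda_{d_4}^+$), not the single sum $\Sigma^-(D,z_0)$ of \eqref{Sigma-}, and the remainder then consists of three double sums rather than the one appearing in the lemma. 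Since you do not exhibit an inequality that collapses to $\Sigma^-(D,z_0)$, your argument does not establish the stated lower bound; as written it proves a differently shaped statement.

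For comparison, the paper uses no vector-sieve inequality at all: it writes $S(\mathcal{A},\mathscr{P}_7,z_0)=\sum_{d_3\mid P_3(z_0)}\sum_{d_4\mid P_4(z_0)}\mu(d_3)\mu(d_4)|\mathcal{A}_{\bm{\mathrm{d}}}|$ and then sandwiches termwise via ``$\lambda_d^-\leq\mu(d)\leq\lambda_d^+$'' in each variable, so the $\lambda^-\lambda^-$ bilinear sum is asserted directly as a lower bound and \eqref{Sigma-}, \eqref{Sigma+} appear immediately. Note that your hesitation is mathematically well founded: termwise $\lambda_d^-\leq\mu(d)$ fails whenever $\mu(d)=-1$ but $\lambda_d^-=0$, and pointwise $\Lambda_3^-(x_3)\Lambda_4^-(x_4)\leq\mathbbm{1}_3\mathbbm{1}_4$ can fail when both factors are negative---this is exactly why the three-term inequality exists. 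So the obstacle you identified is the crux of the lemma: to obtain the statement as printed you must either justify the $\lambda^-\lambda^-$ lower bound (which the paper does in one line, without addressing the sign issue) or reformulate the bound with the three-term combination; merely citing the Br\"udern--Fouvry inequality and hoping it ``collapses'' does not close the argument.
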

\begin{proof}
We can write $S(\mathcal{A}, \mathscr{P}_7, z_0)$  as
\begin{align*}
S(\mathcal{A}, \mathscr{P}_7, z_0) = \sum_{\substack{(x_3, x_4) \in \mathcal{A}\\ (x_j, p_j(z_0)) =1}} 1 &= \sum_{(x_3, x_4) \in \mathcal{A}} \left(\sum_{d_3 \mid (x_3, P_3(z_0))} \mu(d_3) \right) \left(\sum_{d_4 \mid (x_4, P_4(z_0))} \mu(d_4) \right)\\
&= \sum_{d_3\mid P_3(z_0)} \sum_{d_4\mid P_4(z_0)} \mu(d_3) \mu(d_4) |\mathcal{A}_{\bm{\mathrm{d}}}|
\end{align*}
Thus the inequality $\lambda_{d}^{-} \leq \mu(d) \leq \lambda_{d}^{+}$ immediately yields 
\begin{equation*}
\sum_{d_3\mid P_3(z_0)}\sum_{d_4\mid P_4(z_0)} \lambda_{d_3}^{-}\lambda_{d_4}^{-} |\mathcal{A}_{\bm{\mathrm{d}}}| \leq S(\mathcal{A}, \mathscr{P}_7, z_0) \leq \sum_{d_3\mid P_3(z_0)}\sum_{d_4\mid P_4(z_0)} \lambda_{d_3}^{+}\lambda_{d_4}^{+} |\mathcal{A}_{\bm{\mathrm{d}}}|.
\end{equation*}
Invoking Proposition \ref{Proposition 1:2} in the above equation, we arrive at
\begin{align*}
\sum_{d_3\mid P_3(z_0)}\sum_{d_4\mid P_4(z_0)} \lambda_{d_3}^{-}\lambda_{d_4}^{-} \left( \frac{\bm{\omega}(m, \bm{\mathrm{d}})}{d_3d_4}r_4(m) + \bm{R}(m, \bm{\mathrm{d}}) \right) &\leq S(\mathcal{A}, \mathscr{P}_7, z_0)\\
& \leq \sum_{d_3\mid P_3(z_0)}\sum_{d_4\mid P_4(z_0)} \lambda_{d_3}^{+}\lambda_{d_4}^{+} \left(\frac{\bm{\omega}(m, \bm{\mathrm{d}})}{d_3d_4}r_4(m) + \bm{R}(m, \bm{\mathrm{d}})\right).
\end{align*}
It follows from the definition of the Rosser weights that $|\lambda_{d_j}^{\pm}|\leq 1$, $\lambda_{d_3}^{\pm} = 0$ for $d_3 \geq \frac{D}{5^{\beta}}$ and $\lambda_{d_4}^{\pm} = 0$ for $d_4 \geq \frac{D}{3^{\beta}}$ where $\beta \geq 6$. Therefore by applying definitions of $\Sigma^{-}(D, z_0)$ and $\Sigma^{+}(D, z_0)$ from \eqref{Sigma-} and \eqref{Sigma+} respectively and and inserting the absolute value termwise for the sum on $\bm{R}(m, \bm{\mathrm{d}})$, we can conclude our lemma.
\end{proof}

\subsection{Bounds for the main term from sieving} 
We define a multiplicative function
\begin{equation*}
g(\eta) := \prod_{p \mid \eta} \bm{\omega}_2(m, p)
\end{equation*} 
and 
\begin{equation*}
\Sigma_{\mathrm{MT}} (D, z_0):=  \sum_{d_{34}\mid P(z_0)}g(d_{34}) \sum_{\ell \mid \frac{P(z_0)}{d_{34}}} \mu(\ell) \prod_{j=3}^4 V_j(z_0) \mu(\xi_j) \prod_{p\mid \xi_j} \frac{\bm{\omega}_1(m, p)}{p - \bm{\omega}_1(m, p)}.
\end{equation*}
In the following lemma we bound $\Sigma^{-}(D, z_0)$ from below to obtain a lower bound for $S(\mathcal{A}, \mathscr{P}_7, z_0)$ from Lemma \ref{Lem:Bound for S(A, P, z)}.
\begin{lemma}\label{Lem:Lower bound of Sigma-}
For $\beta \geq 5$ and $\mathcal{C}_\beta(s) < 1$, we have
\begin{equation*}
\Sigma^{-}(D, z_0) \geq (1- \mathcal{C}_\beta(s))^2 \Sigma_{\mathrm{MT}} (D, z_0).
\end{equation*}
\end{lemma}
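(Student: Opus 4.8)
## Proof Proposal for Lemma \ref{Lem:Lower bound of Sigma-}

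The plan is to decouple the two sieve variables $d_3,d_4$ and then feed each one into the one--dimensional sieve bound of Lemma~\ref{Lem:Bound for S(A, P, z)}. The starting observation is that, since $2,3\nmid d_3$ and $2,5\nmid d_4$, every prime dividing \emph{both} $d_3$ and $d_4$ divides $P(z_0)$, and at such a prime the local weight in $\bm{\omega}(m,\bm{\mathrm{d}})$ is $\bm{\omega}_2(m,p)$, whereas at a prime dividing exactly one of $d_3,d_4$ it is $\bm{\omega}_1(m,p)$ (this is immediate from \eqref{Omega definition:2}). Hence, writing $d_{34}$ for the common part carrying the $\bm{\omega}_2$--factors (so that the factor $g(d_{34})=\prod_{p\mid d_{34}}\bm{\omega}_2(m,p)$ is produced) and inserting the Möbius identity $\sum_{\ell}\mu(\ell)$ to force the remaining cofactors of $d_3$ and $d_4$ to be coprime, one rewrites the double sum \eqref{Sigma-} in the form
\[
\Sigma^{-}(D,z_0)=\sum_{d_{34}\mid P(z_0)}g(d_{34})\sum_{\ell\mid\frac{P(z_0)}{d_{34}}}\mu(\ell)\prod_{j=3}^{4}\Bigg(\sum_{\substack{d_j\mid P_j(z_0)\\ \xi_j\mid d_j}}\lambda_{d_j}^{-}\,\frac{\bm{\omega}_1(m,d_j)}{d_j}\Bigg),
\]
where $\xi_3,\xi_4$ are the squarefree numbers built from $d_{34}$ and $\ell$ that are forced to divide $d_3,d_4$. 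By design, $\Sigma_{\mathrm{MT}}(D,z_0)$ is exactly the expression obtained from the right--hand side above upon replacing each inner restricted Rosser sum by its ``expected value'' $\mu(\xi_j)\big(\prod_{p\mid\xi_j}\tfrac{\bm{\omega}_1(m,p)}{p-\bm{\omega}_1(m,p)}\big)V_j(z_0)$.

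With $\beta\geq 5$ and $z_0\geq 7$, Lemma~\ref{Lem:Bound for S(A, P, z)} applied with $\delta=\xi_j$ gives, for $j=3,4$,
\[
\sum_{\substack{d_j\mid P_j(z_0)\\ \xi_j\mid d_j}}\lambda_{d_j}^{-}\,\frac{\bm{\omega}_1(m,d_j)}{d_j}\ \ge\ \mu(\xi_j)\Bigg(\prod_{p\mid\xi_j}\frac{\bm{\omega}_1(m,p)}{p-\bm{\omega}_1(m,p)}\Bigg)V_j(z_0)\,(1-\mathfrak{C}_{\beta}(s)),
\]
and the hypothesis $\mathfrak{C}_{\beta}(s)<1$ (the quantity from \eqref{Cbetas}) makes the factor $1-\mathfrak{C}_{\beta}(s)$ nonnegative, while $g(d_{34})\geq 0$ because $\bm{\omega}_2(m,p)\geq 0$ by Lemmas~\ref{Omega for p does not divide m:2} and \ref{Omega for p divide m:2}. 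Composing the bound for the $d_3$--sum and the $d_4$--sum therefore produces the two factors of $1-\mathfrak{C}_{\beta}(s)$, i.e.\ a global $(1-\mathfrak{C}_{\beta}(s))^2$, together with the product of the two expected values; summing back over $d_{34}$ and $\ell$ against the nonnegative weight $g(d_{34})$ and the sign $\mu(\ell)$ reassembles precisely $(1-\mathfrak{C}_{\beta}(s))^{2}\,\Sigma_{\mathrm{MT}}(D,z_0)$, which is the assertion.

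The delicate point — and the step I expect to be the main obstacle — is exactly this composition: Lemma~\ref{Lem:Bound for S(A, P, z)} supplies only a \emph{one--sided} estimate for a $\lambda^{-}$--sum, and both $\mu(\ell)$ and the signs $\mu(\xi_3),\mu(\xi_4)$ of the expected values can be $-1$, so the bound cannot be inserted naively term by term. One way to organize the argument cleanly is to apply the one--variable sieve bound first to the $d_4$--sum with $d_3$ fixed — which, after extracting a factor $1-\mathfrak{C}_{\beta}(s)$ and the nonnegative weight $g(\cdot)$, turns $\Sigma^{-}(D,z_0)$ into a genuine one--dimensional Rosser sum in $d_3$ — and then to apply Lemma~\ref{Lem:Bound for S(A, P, z)} once more to that $d_3$--sum, yielding the second factor $1-\mathfrak{C}_{\beta}(s)$; here one also uses the complementary two--sided control on $\lambda^{-}$--sums coming from the same inclusion--exclusion as in the proof of Lemma~\ref{lem:lambdadsums}, so that the correlated signs $\mu(\xi_3),\mu(\xi_4)$ cancel in the product and the chain of inequalities closes. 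Checking that this two--step reduction reassembles term by term into the definition of $\Sigma_{\mathrm{MT}}(D,z_0)$ is the main bookkeeping burden; the hypotheses $\beta\geq 5$ and $\mathfrak{C}_{\beta}(s)<1$ are precisely what is needed for both applications of Lemma~\ref{Lem:Bound for S(A, P, z)} and for the positivity of $1-\mathfrak{C}_{\beta}(s)$.
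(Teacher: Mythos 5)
Your proposal follows essentially the same route as the paper's own proof: factor $\bm{\omega}(m,\bm{\mathrm{d}})=\bm{\omega}_1(m,d_3)\,\bm{\omega}_1(m,d_4)\,g(d_{34})$ over the gcd $d_{34}$, decouple the coprimality condition with a M\"obius sum over $\ell$, apply Lemma \ref{Lem:Bound for S(A, P, z)} with $\delta=\xi_j$ to each inner restricted Rosser sum to extract the two factors $1-\mathfrak{C}_{\beta}(s)$, and reassemble $(1-\mathfrak{C}_{\beta}(s))^{2}\,\Sigma_{\mathrm{MT}}(D,z_0)$. The sign subtlety you flag (inserting a one-sided bound against the signed weights $\mu(\ell)$, $\mu(\xi_j)$) is genuine, but the paper's proof itself performs exactly this termwise insertion without further comment, so your version is, if anything, more careful than the printed argument at that step.
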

\begin{proof}
It follows from the definition of $\bm{\omega}(m, \bm{\mathrm{d}})$ that 
\begin{equation}\label{omega in g}
\bm{\omega}(m, \bm{\mathrm{d}}) = \bm{\omega}_1(m, d_3) \bm{\omega}_1(m, d_4) g(d_{34}).
\end{equation}
Therefore we can write $\Sigma^{-}(D, z_0)$ as
\begin{align}\label{SigmaDz0 bound}
\Sigma^{-}(D, z_0) &= \sum_{d_3\mid P_3(z_0)} \sum_{d_4\mid P_4(z_0)} \lambda_{d_3}^{-}\lambda_{d_4}^{-} \frac{\bm{\omega}_1(m, d_3) \bm{\omega}_1(m, d_4)}{d_3 d_4}g(d_{34})\nonumber\\
&= \sum_{d_{34}\mid P(z_0)} g(d_{34}) \underset{\gcd \left(d_3,d_4\right)=d_{34}}{\sum_{d_3\mid P_3(z_0)} \sum_{d_4\mid P_4(z_0)}} \lambda_{d_3}^{-}\lambda_{d_4}^{-} \frac{\bm{\omega}_1(m, d_3) \bm{\omega}_1(m, d_4)}{d_3 d_4}
\nonumber\\
&= \sum_{d_{34}\mid P(z_0)} g(d_{34}) S^{-}(d_{34})
\end{align}
where
\[
S^{-}(d_{34}) := \underset{\gcd \left(d_3,d_4\right)=d_{34}}{\sum_{d_3\mid P_3(z_0)} \sum_{d_4\mid P_4(z_0)}} \lambda_{d_3}^{-}\lambda_{d_4}^{-} \frac{\bm{\omega}_1(m, d_3) \bm{\omega}_1(m, d_4)}{d_3 d_4}.
\]
We next handle the sum $S^{-}(d_{34})$. Rewriting the condition $\gcd \left(d_3,d_4\right)=d_{34}$, the sum can be written as 
\begin{align*}
S^{-}(d_{34}) &= \left(\sum_{\substack{d_3 \mid P_3(z_0)\\\left(\frac{d_3}{d_{34}}, \frac{d_4}{d_{34}}\right)=1}} \lambda_{d_3}^{-}\frac{\bm{\omega}_1(m, d_3)}{d_3}\right) \left(\sum_{\substack{d_4 \mid P_4(z_0)\\\left(\frac{d_3}{d_{34}}, \frac{d_4}{d_{34}}\right)=1}} \lambda_{d_4}^{-}\frac{\bm{\omega}_1(m, d_4)}{d_4}\right)\\
&= \left(\sum_{d_3 \mid P_3(z_0)}\sum_{\ell \mid \left(\frac{d_3}{d_{34}}, \frac{d_4}{d_{34}}\right)} \mu(\ell) \lambda_{d_3}^{-}\frac{\bm{\omega}_1(m, d_3)}{d_3}\right) \left(\sum_{d_4 \mid P_4(z_0)}\sum_{\ell \mid \left(\frac{d_3}{d_{34}}, \frac{d_4}{d_{34}}\right)} \mu(\ell) \lambda_{d_4}^{-}\frac{\bm{\omega}_1(m, d_4)}{d_4}\right).
\end{align*}
Now setting $\xi = \ell \, d_{34}$, the above equation reduces to
\begin{align*}
S^{-}(d_{34}) = \sum_{\ell \mid \frac{P(z_0)}{d_{34}}} \prod_{j=3}^4 \left( \sum_{\substack{d_j \mid P_j(z_0)\\ \xi \mid d_j}} \lambda_{d_j}^{-}\frac{\bm{\omega}_1(m, d_j)}{d_j} \right)
\end{align*}
We next apply Lemma \ref{Lem:Bound for S(A, P, z)} inside the product of the above equation to bound $S^{-}(d_{34})$ from below as
\begin{align}\label{Bound of Sd34}
S^{-}(d_{34}) \geq \sum_{\ell \mid \frac{P(z_0)}{d_{34}}} \mu(\ell) \prod_{j=3}^4  \left(\mu(\xi_j) \prod_{p\mid \xi_j} \frac{\bm{\omega}_1(m, p)}{p - \bm{\omega}_1(m, p)} V_j(z_0) (1- \mathcal{C}_{\beta}(s))\right).
\end{align}
Finally inserting \eqref{Bound of Sd34} into \eqref{SigmaDz0 bound}, we can conclude our lemma.
\end{proof}
The next lemma provides the upper bound of $\Sigma^{+}(D, z_0)$.
\begin{lemma}\label{Lem:Upper bound of Sigma+}
For $\beta \geq 5$, we have
\begin{equation*}
\Sigma^{+}(D, z_0) \leq (1+ \mathcal{C}_\beta(s))^2 \Sigma_{\mathrm{MT}} (D, z_0).
\end{equation*}
\end{lemma}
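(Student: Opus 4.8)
The plan is to run the argument of Lemma~\ref{Lem:Lower bound of Sigma-} in reverse: replace $\lambda^{-}$ by $\lambda^{+}$ throughout and every lower bound by the matching upper bound from Lemma~\ref{Lem:Bound for S(A, P, z)}, so that the factor $(1-\mathcal{C}_\beta(s))^2$ is replaced by $(1+\mathcal{C}_\beta(s))^2$. Concretely, I would first invoke the factorization \eqref{omega in g}, $\bm{\omega}(m,\bm{\mathrm{d}})=\bm{\omega}_1(m,d_3)\bm{\omega}_1(m,d_4)g(d_{34})$ with $d_{34}=\gcd(d_3,d_4)$, to group the double sum \eqref{Sigma+} according to the value of $d_{34}$:
\[
\Sigma^{+}(D,z_0)=\sum_{d_{34}\mid P(z_0)}g(d_{34})\,S^{+}(d_{34}),\qquad
S^{+}(d_{34}):=\underset{\gcd(d_3,d_4)=d_{34}}{\sum_{d_3\mid P_3(z_0)}\ \sum_{d_4\mid P_4(z_0)}}\lambda_{d_3}^{+}\lambda_{d_4}^{+}\,\frac{\bm{\omega}_1(m,d_3)\bm{\omega}_1(m,d_4)}{d_3d_4}.
\]
Next I would detect the condition $\gcd(d_3/d_{34},d_4/d_{34})=1$ by Möbius inversion, interchange the order of summation, and set $\xi=\ell\,d_{34}$, which factors
\[
S^{+}(d_{34})=\sum_{\ell\mid P(z_0)/d_{34}}\mu(\ell)\prod_{j=3}^{4}\Biggl(\ \sum_{\substack{d_j\mid P_j(z_0)\\ \xi\mid d_j}}\lambda_{d_j}^{+}\,\frac{\bm{\omega}_1(m,d_j)}{d_j}\Biggr).
\]

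To each of the two inner sums I would apply the second (upper bound) inequality of Lemma~\ref{Lem:Bound for S(A, P, z)} with $\delta=\xi$, bounding it by $\mu(\xi)\bigl(\prod_{p\mid\xi}\frac{\bm{\omega}_1(m,p)}{p-\bm{\omega}_1(m,p)}\bigr)V_j(z_0)(1+\mathcal{C}_\beta(s))$. Since $\xi$ is square-free, $\mu(\xi)^2=1$, so the sign $\mu(\xi)$ drops out of the product over $j\in\{3,4\}$. Substituting back, summing against $g(d_{34})\ge 0$, and comparing term by term with the definition of $\Sigma_{\mathrm{MT}}(D,z_0)$ — whose summand is exactly $g(d_{34})\mu(\ell)\prod_{j=3}^4 V_j(z_0)\mu(\xi_j)\prod_{p\mid\xi_j}\frac{\bm{\omega}_1(m,p)}{p-\bm{\omega}_1(m,p)}$ with $\xi_j=\ell\,d_{34}$ — should produce $\Sigma^{+}(D,z_0)\le(1+\mathcal{C}_\beta(s))^2\,\Sigma_{\mathrm{MT}}(D,z_0)$.

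The point I expect to need the most care is the sign/direction bookkeeping in this last reassembly: each use of Lemma~\ref{Lem:Bound for S(A, P, z)} introduces a factor $\mu(\xi)$, and a further factor $\mu(\ell)$ sits outside the $j$-product, so one must verify that multiplying the two one-sided estimates for $j=3$ and $j=4$ preserves the direction of the inequality rather than reversing it. This is where one uses $0\le\bm{\omega}_1(m,p)<p$ (which also gives $V_j(z_0)>0$), the non-negativity of $g$, the identity $\mu(\xi)^2=1$, the sign of the Rosser-weighted inner sums coming from the structure of $\{\lambda_d^{\pm}\}$, and the facts $(1+\mathcal{C}_\beta(s))^2\ge 1$ and $\Sigma_{\mathrm{MT}}(D,z_0)\ge 0$. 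Once the inequalities are seen to compose correctly, everything else is a routine transcription of the computation in Lemma~\ref{Lem:Lower bound of Sigma-} with the inequalities reversed.
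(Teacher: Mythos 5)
Your proposal is correct and follows essentially the same route as the paper: the paper likewise factors $\bm{\omega}(m,\bm{\mathrm{d}})$ via \eqref{omega in g}, groups by $d_{34}=\gcd(d_3,d_4)$ to write $\Sigma^{+}(D,z_0)=\sum_{d_{34}\mid P(z_0)}g(d_{34})S^{+}(d_{34})$, and then applies the upper-bound half of Lemma \ref{Lem:Bound for S(A, P, z)}, stating that the rest "follows similarly" to the proof of Lemma \ref{Lem:Lower bound of Sigma-}. The sign/direction bookkeeping you flag when composing the one-sided bounds (the factors $\mu(\ell)$ and $\mu(\xi_j)$) is glossed in the paper's proof as well, so your write-up is, if anything, slightly more explicit about where care is needed.
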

\begin{proof}
We can rephrase $\Sigma^{+}(D, z_0)$ from \eqref{omega in g} that 
\begin{equation*}
\Sigma^{+}(D, z_0) = \sum_{d_{34}\mid P(z_0)} g(d_{34}) S^{+}(d_{34})
\end{equation*}
where
\[
S^{+}(d_{34}) := \underset{\gcd \left(d_3,d_4\right)=d_{34}}{\sum_{d_3\mid P_3(z_0)} \sum_{d_4\mid P_4(z_0)}} \lambda_{d_3}^{+}\lambda_{d_4}^{+} \frac{\bm{\omega}_1(m, d_3) \bm{\omega}_1(m, d_4)}{d_3 d_4}.
\]
Now applying Lemma \ref{Lem:Bound for S(A, P, z)}, the proof follows similarly as in the proof of Lemma \ref{Lem:Lower bound of Sigma-}.
\end{proof}
\begin{lemma}
We have
\begin{equation*}
\Sigma_{\mathrm{MT}}(D, z_0) = \left(1-\frac{\bm{\omega}_1(m, 3)}{3}\right) \left(1-\frac{\bm{\omega}_1(m, 5)}{5}\right) \prod_{p\mid P(z_0)} \left(1- \frac{\Omega(m, p)}{p}\right)
\end{equation*}
\end{lemma}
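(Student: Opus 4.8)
The plan is to show that, after pulling out the prime-independent factor $V_3(z_0)V_4(z_0)$, the double sum defining $\Sigma_{\mathrm{MT}}(D,z_0)$ is multiplicative over the primes $p\mid P(z_0)$ and collapses into an Euler product whose $p$-th factor is exactly $1-\Omega(m,p)/p$.

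First I would unwind the inner summand, using that $\xi_3=\xi_4=\xi:=\ell\,d_{34}$, as in the proof of Lemma~\ref{Lem:Lower bound of Sigma-}. Since $\gcd(\ell,d_{34})=1$, the integer $\xi$ is squarefree and divides $P(z_0)$, so $\mu(\xi_3)\mu(\xi_4)=\mu(\ell d_{34})^2=1$; setting $W(p):=\bigl(\tfrac{\bm{\omega}_1(m,p)}{p-\bm{\omega}_1(m,p)}\bigr)^2$, the factor $\prod_{j=3}^4\mu(\xi_j)\prod_{p\mid\xi_j}\tfrac{\bm{\omega}_1(m,p)}{p-\bm{\omega}_1(m,p)}$ equals $\prod_{p\mid\ell}W(p)\cdot\prod_{p\mid d_{34}}W(p)$, while $V_3(z_0)V_4(z_0)$ comes out of both sums. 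Then I would perform the two summations in order. The sum over $\ell\mid P(z_0)/d_{34}$ is the standard squarefree Euler identity
\[
\sum_{\ell\mid P(z_0)/d_{34}}\mu(\ell)\prod_{p\mid\ell}W(p)=\prod_{p\mid P(z_0)/d_{34}}\bigl(1-W(p)\bigr),
\]
and the surviving summand $g(d_{34})\prod_{p\mid d_{34}}W(p)\prod_{p\mid P(z_0)/d_{34}}(1-W(p))$ is multiplicative in $d_{34}$, so summing over $d_{34}\mid P(z_0)$ yields $\prod_{p\mid P(z_0)}\bigl(g(p)W(p)+1-W(p)\bigr)$, each prime $p$ contributing $g(p)W(p)$ when $p\mid d_{34}$ and $1-W(p)$ otherwise.

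Finally I would assemble the pieces and check the local factor. Because $P_3(z_0)=5P(z_0)$, $P_4(z_0)=3P(z_0)$, and $3,5\nmid P(z_0)$, we have $V_3(z_0)V_4(z_0)=\bigl(1-\tfrac{\bm{\omega}_1(m,3)}{3}\bigr)\bigl(1-\tfrac{\bm{\omega}_1(m,5)}{5}\bigr)\prod_{p\mid P(z_0)}\bigl(1-\tfrac{\bm{\omega}_1(m,p)}{p}\bigr)^2$, so it suffices to verify
\[
\Bigl(1-\tfrac{\bm{\omega}_1(m,p)}{p}\Bigr)^2\bigl(g(p)W(p)+1-W(p)\bigr)=1-\tfrac{\Omega(m,p)}{p}.
\]
Clearing the denominator $(p-\bm{\omega}_1(m,p))^2$, which cancels in $W(p)$ and in $\bigl(1-\tfrac{\bm{\omega}_1(m,p)}{p}\bigr)^2=\tfrac{(p-\bm{\omega}_1(m,p))^2}{p^2}$, turns the left side into $p^{-2}\bigl(p^2-2p\,\bm{\omega}_1(m,p)+\bm{\omega}_1(m,p)^2g(p)\bigr)$. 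Comparing the local factors at a prime $p$ dividing both $d_3$ and $d_4$ in \eqref{omega in g} gives $\bm{\omega}_1(m,p)^2g(p)=\bm{\omega}_2(m,p)$, so this equals $1-p^{-1}\bigl(2\bm{\omega}_1(m,p)-\tfrac{\bm{\omega}_2(m,p)}{p}\bigr)=1-\tfrac{\Omega(m,p)}{p}$ by the definition of $\Omega$, which finishes the proof. The only step requiring genuine care will be the bookkeeping that separates primes dividing $\ell$ from those dividing $d_{34}$, so that the double sum really factors as an Euler product; apart from that I expect no essential obstacle, the rest being a routine local computation.
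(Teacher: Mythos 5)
Your proposal is correct in substance, but it takes a genuinely different route from the paper. You evaluate $\Sigma_{\mathrm{MT}}(D,z_0)$ directly from its definition: pull out $V_3(z_0)V_4(z_0)$, perform the M\"obius sum over $\ell$, recognize the remaining sum over $d_{34}$ as an Euler product with local factor $g(p)W(p)+1-W(p)$, and verify the local identity $\bigl(1-\tfrac{\bm{\omega}_1(m,p)}{p}\bigr)^2\bigl(g(p)W(p)+1-W(p)\bigr)=1-\tfrac{\Omega(m,p)}{p}$, together with the observation that $P_3(z_0)=5P(z_0)$ and $P_4(z_0)=3P(z_0)$ produce the two exceptional factors at $3$ and $5$. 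The paper argues indirectly: it combines Lemma \ref{Lem:Lower bound of Sigma-} and Lemma \ref{Lem:Upper bound of Sigma+} to get $(1-\mathfrak{C}_{\beta}(s))^2\Sigma_{\mathrm{MT}}\leq \Sigma^{-}(D,z_0)\leq \Sigma^{+}(D,z_0)\leq (1+\mathfrak{C}_{\beta}(s))^2\Sigma_{\mathrm{MT}}$, uses that $\Sigma_{\mathrm{MT}}$ is independent of $D$, lets $D\to\infty$ so that $\lambda_{d}^{\pm}=\mu(d)$ and $\mathfrak{C}_{\beta}(s)\to 0$, and identifies the common limit of $\Sigma^{\pm}$ with the stated Euler product (that limit evaluation is essentially the same local computation you carry out, but performed on $\sum\sum\mu(d_3)\mu(d_4)\bm{\omega}(m,\bm{\mathrm{d}})/(d_3d_4)$ rather than on the $g$-decomposed sum). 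Your direct computation is more self-contained and arguably more transparent, since it needs neither the squeeze nor the convergence $\mathfrak{C}_{\beta}(s)\to 0$; the paper's route buys brevity by recycling the two bounding lemmas it has already established.

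One point deserves explicit care. Your key identity $\bm{\omega}_1(m,p)^2\,g(p)=\bm{\omega}_2(m,p)$ is indeed what \eqref{omega in g} forces once it is compared with the definition \eqref{Omega definition:2} of $\bm{\omega}(m,\bm{\mathrm{d}})$, but it contradicts the paper's displayed definition $g(\eta)=\prod_{p\mid\eta}\bm{\omega}_2(m,p)$; with that literal $g$ your local factor would be $1-\tfrac{2\bm{\omega}_1(m,p)}{p}+\tfrac{\bm{\omega}_1(m,p)^2\bm{\omega}_2(m,p)}{p^2}$, which is not $1-\tfrac{\Omega(m,p)}{p}$ in general, so the stated identity would fail. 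This is an inconsistency in the paper itself (the factorization \eqref{omega in g} is valid only with $g(p)=\bm{\omega}_2(m,p)/\bm{\omega}_1(m,p)^2$), and you resolved it in the only way compatible with the lemma; you should state explicitly that you take \eqref{omega in g}, equivalently $g(p)=\bm{\omega}_2(m,p)/\bm{\omega}_1(m,p)^2$, as the operative definition, rather than presenting the identity as a consequence of both displayed definitions at once.
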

\begin{proof}
It follows from Lemma \ref{Lem:Lower bound of Sigma-} and Lemma \ref{Lem:Upper bound of Sigma+} that 
\[
(1- \mathcal{C}_\beta(s))^2 \Sigma_{\mathrm{MT}} (D, z_0) \leq \Sigma^{-}(D, z_0)\leq \Sigma^{+}(D, z_0) \leq (1+ \mathcal{C}_\beta(s))^2 \Sigma_{\mathrm{MT}} (D, z_0)
\]
Now as $D\to \infty$, we have $\lambda_{d_j}^{-} = \lambda_{d_j}^{+} = \mu(d_j)$. Therefore the definitions \eqref{Sigma-} and \eqref{Sigma+} together implies
\[
\lim_{D\to \infty} \Sigma^{-}(D, z_0) = \lim_{D\to \infty} \Sigma^{+}(D, z_0) = \left(1-\frac{\bm{\omega}_1(m, 3)}{3}\right) \left(1-\frac{\bm{\omega}_1(m, 5)}{5}\right) \prod_{p\mid P(z_0)} \left(1- \frac{\Omega(m, p)}{p}\right).
\]
On the other hand, for $\beta \geq 5$, $a_\beta < 1$ and $s = \frac{\log D}{\log z_0} \to \infty$ as $D \to \infty$, thus $ \mathcal{C}_\beta(s) \to 0$ as $D \to \infty$. This completes the proof of our lemma.
\end{proof}
We next provide the lower bound for $\Sigma_{\mathrm{MT}}(D, z_0)$.
\begin{lemma}\label{Lem: Lower bound of sigmamt}
For $z_0 \geq 7$, we have
\[
\Sigma_{\mathrm{MT}}(D, z_0) \geq \frac{1.39}{(\log z_0)^3}
\]
\end{lemma}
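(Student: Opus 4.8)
The plan is to begin from the closed form for $\Sigma_{\mathrm{MT}}(D,z_0)$ provided by the preceding lemma,
\[
\Sigma_{\mathrm{MT}}(D, z_0) = \left(1-\frac{\bm{\omega}_1(m, 3)}{3}\right) \left(1-\frac{\bm{\omega}_1(m, 5)}{5}\right) \prod_{p\mid P(z_0)} \left(1- \frac{\Omega(m, p)}{p}\right),
\]
and to bound each of the three factors from below by an explicit quantity. For the two fixed factors I would use \eqref{omega1 bound:2}, i.e.\ $\bm{\omega}_1(m,p)\le p/(p-1)$, which gives $1-\bm{\omega}_1(m,3)/3\ge\tfrac12$ and $1-\bm{\omega}_1(m,5)/5\ge\tfrac34$. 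For the Euler product, every prime dividing $P(z_0)$ is $\ge 7$, so Lemma \ref{Big omega bound} gives $\Omega(m,p)\le\tfrac52$; combined with the elementary inequality $1-\tfrac{5}{2p}\ge\bigl(1-\tfrac1p\bigr)^{3}$, which holds for all $p\ge 7$ (it rearranges to $\tfrac3p-\tfrac1{p^2}\le\tfrac12$), this yields
\[
\prod_{p\mid P(z_0)}\left(1-\frac{\Omega(m,p)}{p}\right)\ \ge\ \prod_{7\le p<z_0}\left(1-\frac1p\right)^{3}.
\]

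The next step is to reduce the truncated product to a full Mertens product. Since $z_0\ge 7$ and $\prod_{p\le 5}(1-1/p)=\tfrac{4}{15}$, one has $\prod_{7\le p<z_0}(1-1/p)=\tfrac{15}{4}\prod_{p<z_0}(1-1/p)$, and the explicit form of Mertens' theorem (Rosser--Schoenfeld \cite{Rosser}), namely $\prod_{p<z_0}\tfrac{p}{p-1}\le e^{\gamma}\log z_0\bigl(1+\tfrac{1}{\log^2 z_0}\bigr)$, together with $\tfrac{1}{1+u}\ge 1-u$, gives $\prod_{p<z_0}(1-1/p)\ge\tfrac{e^{-\gamma}}{\log z_0}\bigl(1-\tfrac{1}{\log^2 z_0}\bigr)$. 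Assembling the three lower bounds yields
\[
\Sigma_{\mathrm{MT}}(D, z_0)\ \ge\ \frac{3}{8}\left(\frac{15}{4}\right)^{3}e^{-3\gamma}\left(1-\frac{1}{\log^2 z_0}\right)^{3}\frac{1}{(\log z_0)^{3}}\ =:\ \frac{c(z_0)}{(\log z_0)^{3}}.
\]
Since $1/\log^2 z_0$ is decreasing on $[7,\infty)$, the constant $c(z_0)$ is increasing there, so $c(z_0)\ge c(7)=\tfrac38\bigl(\tfrac{15}{4}\bigr)^{3}e^{-3\gamma}\bigl(1-1/\log^2 7\bigr)^{3}$, and a numerical evaluation gives $c(7)\approx 1.3949>1.39$, which completes the argument.

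The argument is essentially elementary, so I do not anticipate a serious obstacle. The one point calling for care is that the constant is nearly tight along this chain of inequalities — $c(7)$ exceeds $1.39$ by well under one percent — so one cannot afford a crude Mertens bound (indeed $\prod_{p<z_0}(1-1/p)\ge e^{-\gamma}/\log z_0$ is \emph{false}), and one must genuinely exploit that all sifting primes are $\ge 7$, since $1-\tfrac{5}{2p}\ge(1-1/p)^3$ already fails at $p=5$. If one prefers to avoid relying on the Rosser--Schoenfeld estimate in any range of $z_0$ where it is not literally stated, the product over $p\mid P(z_0)$ there runs over only a handful of primes and the inequality follows by a direct computation with ample room; for instance $\Sigma_{\mathrm{MT}}(D,z_0)\ge\tfrac38$ whenever $P(z_0)$ is empty.
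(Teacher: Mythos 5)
Your proposal is correct and follows essentially the same route as the paper: the factor $\tfrac38$ from $\bm{\omega}_1(m,3)\le\tfrac32$, $\bm{\omega}_1(m,5)\le\tfrac54$, the bound $\Omega(m,p)\le\tfrac52$ with $1-\tfrac{5}{2p}\ge(1-\tfrac1p)^3$ for $p\ge7$, extension to the full Mertens product at the cost of $(15/4)^3$ (the paper's $19.77$), and the Rosser--Schoenfeld estimate evaluated at $z_0=7$, yielding the same constant $\approx1.395>1.39$. No gaps.
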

\begin{proof}
The bound of $\bm{\omega}_1(m, p)$ in \eqref{omega1 bound:2} along with Lemma \ref{Big omega bound} yields
\[
\Sigma_{\mathrm{MT}}(D, z_0) \geq \frac{3}{8} \prod_{p \mid P(z_0)} \left(1 - \frac{2.5}{p}\right).
\]
It can be observed that for $p \geq 7$, we have $1 - \frac{2.5}{p} \geq \left(1 - \frac{1}{p} \right)^3$. Thus, we can write
\begin{equation}\label{SigmaMT bound:2}
\Sigma_{\mathrm{MT}}(D, z_0) \geq \frac{3}{8} \prod_{p \mid P(z_0)} \left(1 - \frac{1}{p}\right)^3 \geq 19.77 \prod_{p < z_0} \left(1 - \frac{1}{p}\right)^3.
\end{equation}
For $\gamma$ denoting the Euler's constant, the result \cite[Equation 3.30, p. 70]{Rosser} 
\begin{equation*}\label{Rosser and schoenfield}
\prod_{p < z_0} \left(1 - \frac{1}{p}\right) > \frac{e^{-\gamma}}{\log z_0}\left(1+\frac{1}{\log^2 z_0}\right)^{-1},
\end{equation*}
reduces the bound of $\Sigma_{\mathrm{MT}}(D, z_0)$ in \eqref{SigmaMT bound:2}  as
\[
\Sigma_{\mathrm{MT}}(D, z_0) \geq 19.77 \frac{e^{-3\gamma}}{(\log z_0)^3}\left(1+\frac{1}{\log^2 z_0}\right)^{-3}\geq 19.77 \frac{e^{-3\gamma}}{(\log z_0)^3}\left(1-\frac{1}{\log^2 7}\right)^{3} \geq \frac{1.39}{(\log z_0)^3}
\]
where in the penultimate step we have used the fact that $z_0 \geq 7$. This completes the proof of the lemma.
\end{proof}

\subsection{Bounds for the error term from sieving} 
We next bound the cuspidal contribution to obtain a bound for $S(\mathcal{A}, \mathscr{P}_7, z_0)$.
\begin{lemma}\label{Lem:Bound of Sapz error 2}
For $\beta\geq 7$, we have
\begin{equation*}
\sum_{\substack{d_3 \mid P_3(z_0)\\d_3 < \frac{D}{5^{\beta}}}}\sum_{\substack{d_4 \mid P_4(z_0)\\d_4 < \frac{D}{3^{\beta}}}} |\bm{R}(m, \bm{\mathrm{d}})| \leq 1.38 \times 10^{-34} m^{\frac{3}{5}} D^{14.02}
\end{equation*}
\end{lemma}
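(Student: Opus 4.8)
The plan is to bound the double sum by crudely estimating the number of admissible pairs $(d_3,d_4)$ and multiplying by the uniform bound for $\bm{R}(m,\bm{\mathrm{d}})$ from Lemma~\ref{Upper bound on Rmd:2}. First I would recall that Lemma~\ref{Upper bound on Rmd:2} gives $|\bm{R}(m,\bm{\mathrm{d}})| \leq 7.07 \times 10^{23} (d_3 d_4)^{6.01} m^{3/5}$, so it suffices to control $\sum_{d_3,d_4} (d_3 d_4)^{6.01}$ over the range $d_3 < D/5^{\beta}$, $d_4 < D/3^{\beta}$. Since the range restrictions are just $d_3 < D/5^{\beta}$ and $d_4 < D/3^{\beta}$, the sum $\sum_{d_3 < D/5^\beta} d_3^{6.01}$ is at most $(D/5^\beta)^{7.01}$ times a mild factor (bounding the sum of $d^{6.01}$ up to $X$ by $X^{7.01}$, or more carefully by $X \cdot X^{6.01} = X^{7.01}$ using that there are at most $X$ terms each of size at most $X^{6.01}$), and similarly for $d_4$. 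Multiplying, $\sum_{d_3,d_4}(d_3d_4)^{6.01} \leq (D/5^\beta)^{7.01}(D/3^\beta)^{7.01} = D^{14.02}(15^\beta)^{-7.01}$.

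Then I would combine: the full sum is at most $7.07 \times 10^{23} m^{3/5} D^{14.02} (15^{\beta})^{-7.01}$. Choosing $\beta \geq 7$, we get $(15^{\beta})^{-7.01} \leq 15^{-49.07}$, and a direct numerical check gives $7.07 \times 10^{23} \cdot 15^{-49.07} \leq 1.38 \times 10^{-34}$. This yields exactly the claimed bound
\[
\sum_{\substack{d_3 \mid P_3(z_0)\\ d_3 < D/5^{\beta}}}\sum_{\substack{d_4 \mid P_4(z_0)\\ d_4 < D/3^{\beta}}} |\bm{R}(m, \bm{\mathrm{d}})| \leq 1.38 \times 10^{-34}\, m^{3/5} D^{14.02}.
\]

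The main subtlety is getting the numerical constant to come out small enough: one must be careful about how generously the sum $\sum_{d \leq X} d^{6.01}$ is bounded, since bounding it by $X^{7.01}$ rather than $\tfrac{1}{7.01}X^{7.01}$ costs a factor, but the enormous saving from $(15^\beta)^{-7.01}$ with $\beta \geq 7$ absorbs all such slack comfortably — indeed $15^{-49} \approx 10^{-57.6}$, which dwarfs the $10^{23.85}$ from Lemma~\ref{Upper bound on Rmd:2}. So the only real work is arithmetic bookkeeping; there is no genuine obstacle. One should note for cleanliness that the conditions $d_j \mid P_j(z_0)$ only shrink the sum (fewer terms), so discarding them and summing over all integers in the stated ranges is legitimate and loses nothing essential.
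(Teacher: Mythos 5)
Your proposal is correct and follows essentially the same route as the paper: apply the uniform bound $|\bm{R}(m,\bm{\mathrm{d}})| \leq 7.07 \times 10^{23}(d_3d_4)^{6.01}m^{3/5}$ from Lemma~\ref{Upper bound on Rmd:2}, then bound the double sum trivially using $d_3 < D/5^{\beta}$, $d_4 < D/3^{\beta}$ with $\beta \geq 7$ (the paper phrases this as $d_3d_4 < D^2/15^{7}$), which yields the same factor $D^{14.02}\cdot 15^{-49.07}$ and hence the constant $1.38\times 10^{-34}$. Your numerical check is accurate ($7.07\times 10^{23}\cdot 15^{-49.07}\approx 1.38\times 10^{-34}$), and discarding the divisibility conditions $d_j \mid P_j(z_0)$ is indeed harmless since all terms are nonnegative.
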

\begin{proof}
The proof of the lemma follows immediately from Lemma \ref{Upper bound on Rmd:2} by applying the trivial bound $d_3d_4 <\frac{D^2}{15^7}$ for $\beta\geq 7$. 
\end{proof}
\subsection{The proof of Theorem \ref{Thm 2}}
We next invoke Lemma \ref{Lem:Bound of Sapz error 2} into Lemma \ref{Lem:Bound of Sapz 2} to obtain
\begin{equation*}
\Sigma^{-}(D, z_0) r_4(m) - 1.38 \times 10^{-34} m^{\frac{3}{5}} D^{14.02} \leq S(\mathcal{A}, \mathscr{P}_7, z_0) \leq \Sigma^{+}(D, z_0) r_4(m) + 1.38 \times 10^{-34} m^{\frac{3}{5}} D^{14.02}.
\end{equation*}
The following lemma provides the lower bound of $S(\mathcal{A}, \mathscr{P}_7, z_0)$ for $\beta = 7$ and $D\geq z_0^{21}$.
\begin{lemma}\label{Lem:Lower bound of SApz0}
For $\beta = 7$ and $D\geq z_0^{21}$, we have
\begin{equation*}
S(\mathcal{A}, \mathscr{P}_7, z_0) \geq \frac{0.23\, r_4(m)}{(\log z_0)^{3}} - 1.38 \times 10^{-34} m^{\frac{3}{5}} D^{14.02}
\end{equation*}
\end{lemma}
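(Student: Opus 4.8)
The proof will be a direct constant chase through bounds already assembled in this section. The starting point is the display immediately preceding the lemma, obtained by substituting Lemma~\ref{Lem:Bound of Sapz error 2} into Lemma~\ref{Lem:Bound of Sapz 2}:
\[
S(\mathcal{A},\mathscr{P}_7,z_0) \;\geq\; \Sigma^{-}(D,z_0)\,r_4(m) \;-\; 1.38\times 10^{-34}\,m^{3/5}\,D^{14.02}.
\]
Since the subtracted term is already exactly the one appearing in the claimed inequality, it suffices to prove that $\Sigma^{-}(D,z_0)\geq 0.23\,(\log z_0)^{-3}$ under the hypotheses $\beta=7$ and $D\geq z_0^{21}$, with $z_0\geq 7$ understood throughout as in the earlier lemmas.

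First I would apply Lemma~\ref{Lem:Lower bound of Sigma-} with $\beta=7\geq 5$, which gives $\Sigma^{-}(D,z_0)\geq(1-\mathcal{C}_\beta(s))^2\,\Sigma_{\mathrm{MT}}(D,z_0)$ once $\mathcal{C}_\beta(s)<1$ is known; and then Lemma~\ref{Lem: Lower bound of sigmamt}, valid for $z_0\geq 7$, which gives $\Sigma_{\mathrm{MT}}(D,z_0)\geq 1.39\,(\log z_0)^{-3}$. Combining these two bounds reduces the statement to the numerical inequality $1.39\,(1-\mathcal{C}_7(s))^2\geq 0.23$, i.e.\ $\mathcal{C}_7(s)\leq 0.589$ (which is in particular $<1$, retroactively justifying the use of Lemma~\ref{Lem:Lower bound of Sigma-}).

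It remains to evaluate $\mathcal{C}_7(s)$ numerically using \eqref{abetarbeta} and \eqref{Cbetas}. With $\beta=7$ one has $\tfrac{\beta}{\beta-1}=\tfrac{7}{6}$, so $a_7=\tfrac{7e}{6}\log\tfrac{7}{6}$ and $r_7=\log\!\bigl(1+\tfrac{6}{\log 7}\bigr)/\log\tfrac{7}{6}$; numerically $a_7\approx 0.489<1$ and $r_7\approx 9.13$, whence
\[
\mathcal{C}_7(s) \;=\; e^{\,r_7-1}\Bigl(1+\tfrac{6}{\log 7}\Bigr)\frac{1}{1-a_7}\,a_7^{\,\lfloor s-7\rfloor+1}.
\]
The hypothesis $D\geq z_0^{21}$ gives $s=\tfrac{\log D}{\log z_0}\geq 21$, so $\lfloor s-7\rfloor+1\geq 15$; since $0<a_7<1$ the quantity $\mathcal{C}_7(s)$ is non-increasing in $s$, hence $\mathcal{C}_7(s)\leq\mathcal{C}_7(21)=e^{\,r_7-1}\bigl(1+\tfrac{6}{\log 7}\bigr)(1-a_7)^{-1}a_7^{15}$. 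A direct computation of the five factors gives $\mathcal{C}_7(21)\approx 0.588<0.589$, so $(1-\mathcal{C}_7(s))^2\geq(0.411)^2\geq 0.1689$ and $1.39\times 0.1689\geq 0.23$, which yields the required lower bound on $\Sigma^{-}(D,z_0)$ and therefore the lemma.

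The only delicate point is that this numerical margin is slim: one needs $a_7$ safely below $1$ so that the factor $a_7^{15}$ overcomes the large constant $e^{r_7-1}\approx 3.4\times 10^3$, and the final slack of $\mathcal{C}_7(21)$ against the threshold $0.589$ is only of order $10^{-3}$. Thus the main (and only minor) obstacle is to carry out each elementary estimate — $\log\tfrac{7}{6}$, $\log 7$, $e^{r_7-1}$, $a_7^{15}$ — with enough precision and to round consistently in the safe direction; no idea beyond the already-established lemmas is needed, and there is in fact a little room to spare (the exponent $21$ could be lowered somewhat).
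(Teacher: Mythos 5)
Your proof is correct and follows essentially the same route as the paper's: substitute the error bound of Lemma \ref{Lem:Bound of Sapz error 2} into Lemma \ref{Lem:Bound of Sapz 2}, then lower-bound $\Sigma^{-}(D,z_0)$ by combining Lemma \ref{Lem:Lower bound of Sigma-}, Lemma \ref{Lem: Lower bound of sigmamt}, and a numerical bound on $\mathfrak{C}_{7}(s)$ for $s\geq 21$. Your sharper evaluation $\mathfrak{C}_{7}(21)\approx 0.588$ is in fact needed: the paper's stated intermediate bound $\mathfrak{C}_{\beta}(s)\leq 3/5$ would only give $1.39\cdot(2/5)^{2}=0.2224<0.23$, so your extra precision is what actually secures the constant $0.23$.
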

\begin{proof}
It follows from the definition \eqref{Cbetas} that for $\beta = 7$ and $D\geq z_0^{21}$,
\[
\mathfrak{C}_{\beta}(s) \leq \frac{3}{5}.
\] 
Inserting the above bound and the bound from Lemma \ref{Lem: Lower bound of sigmamt} into Lemma \ref{Lem:Lower bound of Sigma-}, we can bound $\Sigma^{-}(D, z_0)$ from below as
\[
\Sigma^{-}(D, z_0) \geq 0.23 (\log z_0)^{-3}.
\]
This completes the proof of the lemma.
\end{proof}
We are now ready to prove Theorem \ref{Thm 2}. 
\begin{proof}[Proof of Theorem \ref{Thm 2}]
It follows from \eqref{Jacobi} that for $4\nmid m$, one can bound trivially $r_4(m)$ as 
\begin{equation}\label{Lower bound Jacobi}
r_4(m) \geq 8m.
\end{equation}
Therefore Lemma \ref{Lem:Lower bound of SApz0} together with the bound \eqref{Lower bound Jacobi} implies that for $4\nmid m$, the number of solutions to the equation $x^2 + y^2 + (2^a3^b z_1)^2 + (2^c5^d z_2)^2 = m$ with $p\mid z_1, z_2$ as long as $p \geq z_0$, can be written as
\[
S(\mathcal{A}, \mathscr{P}_7, z_0) \geq \frac{1.84\, m}{(\log z_0)^{3}} - 1.38 \times 10^{-34} m^{\frac{3}{5}} D^{14.02},
\]
where $D \geq z_0^{21}$. We next choose $D = z_0^{21}$ and $z_0 = m^{\frac{1}{738}}$ to obtain
\[
S(\mathcal{A}, \mathscr{P}_7, z_0) \geq \frac{1.84 \cdot (738)^3 \, m}{(\log m)^{3}} - 1.38 \times 10^{-34} m^{0.99895}.
\]
Applying the bound 
\[
\log m \leq \frac{1}{r}m^r
\]
for $r = 10^{-6}$, we obtain
\[
S(\mathcal{A}, \mathscr{P}_7, z_0) \geq 7.39 \times 10^{-10} \, m^{0.99999} - 1.38 \times 10^{-34} m^{0.99895}.
\]
Clearly, $S(\mathcal{A}, \mathscr{P}_7, z_0)$ is positive as long as 
\[
m^{0.00104} \geq 1.86 \times 10^{-25},
\]
which holds trivially for any natural number $m$. Therefore, for every $m \in \N$ with $4\nmid m$ we have a representation 
\begin{equation}\label{Representation of m}
m = x^2 + y^2 + (2^a3^bz_1)^2 + (2^c 5^d z_2)^2
\end{equation}
where $x, y, a, b, c, d$ are non-negative integers and $z_1, z_2$ has at most $369$ prime factors. 

Now, for $4\mid m$, we write $m = 4^{\ell} m_0$ such that $\gcd (4, m_0) = 1$. It follows from \eqref{Representation of m} that we can represent $m_0$ as
\[
m_0 = x'^2 + y'^2 + (2^{a'}3^{b'}z_1')^2 + (2^{c'} 5^{d'} z_2')^2
\]
for some non-negative integers $x', y', a', b', c', d'$ and $z_1', z_2'$ with at most $369$ prime factors. Therefore
\[
m = (2^\ell x')^2 + (2^\ell y')^2 + (2^{a'+\ell}3^{b'}z_1')^2 + (2^{c'+\ell} 5^{d'} z_2')^2,
\]
which concludes that every $m$ can be represented in the form of 
\[
m = x^2 + y^2 + (2^a3^bz_1)^2 + (2^c 5^d z_2)^2,
\]
for some non-negative integers $x, y, a, b, c, d$ and $z_1, z_2$ with at most $369$ prime factors. This completes the proof of our theorem.
\end{proof}

\subsection*{Acknowledgements} 
The author would like to show his sincere gratitude to Prof. Ben Kane for fruitful discussions and suggestions on the manuscript. The author is currently a postdoctoral fellow at IIT Gandhinagar, India supported by the SERB-DST CRG grant CRG/2020/002367 of Prof. Atul Dixit. The author sincerely thanks the institute and Prof. Atul Dixit for their support.

\end{document}